\DeclareMathSymbol{\shortminus}{\mathbin}{AMSa}{"39}
\newtheorem{thm}{Theorem}
\newtheorem{cor}{Corollary}
\newtheorem{lem}{Lemma}
\newtheorem{mydef}{Definition}
\newtheorem{ex}{Example}
\newtheorem{remark}{Remark}
\newtheorem{prop}{Proposition}
\title{A solution to the $p$-adic Schr\"{o}dinger equation}
\author{Haonan Gu}
\date{December 2022}
\begin{document}

\maketitle

\section{Introduction}

The field of $p$-adic numbers $\mathbb{Q}_p$ (with $p$ a prime number) was first introduced by the mathematician Kurt Hensel at the end of the 19th century. It provides an alternative number system to the field of real numbers $\mathbb{R}$ that singles out a prime $p$. Because of its unique properties as well as topological and algebraic structure, the field of $p$-adic numbers has wide applications in number theory, with Andrew Wiles' proof of Fermat's Last Theorem being among the most famous ones. This thesis aims to provide an introduction to the basic properties of the field of $p$-adic numbers and explore a $p$-adic analogue to the famous time-independent Schr\"{o}dinger equation. 

In the second section of this thesis, we start by defining the $p$-adic norm $|\cdot|_p$ on $\mathbb{Q}_p$, an alternative way of measuring distance compared to the real absolute value. Under this view of distance, two rational numbers are closer to one another if their difference is divisible by a higher power of $p$. After establishing some topological definitions and proofs following \cite{Alexa} \cite{Jack} \cite{Zuniga}, we define the $p$-adic field $\mathbb{Q}_p$ as the completion of $\mathbb{Q}$ with respect to the $p$-adic norm $|\cdot|_p$. We prove the existence and uniqueness of the $p$-adic expansion for each $p$-adic number, which is analogous to the decimal expansion of a real number in the sense that every $p$-adic number can be expressed as a power series in $p$. Finally, basic topological properties of the $p$-adic field are proved and the ring of $p$-adic integers $\mathbb{Z}_p$ is defined in Section 2.3. Using the local compactness of $\mathbb{Q}_p$, we define a notion of $p$-adic integration in Section 3 by considering a suitable Haar measure. We prove some important propositions on $p$-adic integration, as well as compute specific examples of $p$-adic integrals.

In Section 4, we introduce a $p$-adic derivative operator $D^{\alpha}f(x)$ in \cite{Vladimirov}. Similar to the derivative in $\mathbb{R}$, it satisfies 
\begin{equation*}
D^\alpha [D^\beta f(x)]=D^{\alpha+\beta} f(x)
\end{equation*}
for any $\alpha,\beta\in\mathbb{N}$ and complex-valued functions $f$ of a $p$-adic variable. Another important property of operator $D^{\alpha}$ is 
\begin{equation*}
D^\alpha {\left| x\right|_p}^n=\frac{\Gamma_p(n+1)}{\Gamma_p(n-\alpha+1)} \left| x\right|_p^{n-\alpha} 
\end{equation*}
for n$\geq$1. 

Using the notion of $p$-adic integration introduced in Section 3 and the derivative operator in Section 4, we finally explore our main application in Section 5. In 5.1, we introduce the time-independent Schr\"{o}dinger equation
\begin{equation*}
-\frac{h^2}{2m}\frac{d^2\Psi}{dx^2}+\frac{1}{2}m\omega^2x^2\Psi=E\Psi,
\end{equation*} its ground state solution \begin{equation*}
\Psi_0(x)=A_0e^{-\frac{m\omega}{2h}x^2},
\end{equation*} where $A_0$ is some constant, and the raising and lowering operators $a_+$ and $a_-$, which produce solutions to the equation with raising or lowering energy $E$. In light of the traditional Schr\"{o}dinger equation, we define a $p$-adic analogue
\begin{equation*}
D^2\Psi(x)+B\left| x\right|_p^2\Psi(x)=E\Psi(x)
\end{equation*}
using the derivative operator $D^{2}$ introduced in Section 4. Here B and E are some complex constants. We make two attempts to find a solution to this equation with the first one more naive, and the second one successful. Inspired by \begin{equation*}
\Psi_0(x)=A_0e^{-\frac{m\omega}{2h}x^2},
\end{equation*} the ground solution of the traditional Schr\"{o}dinger equation, we make a naive guess that a solution of the $p$-adic Schr\"{o}dinger equation is in the form
$$\Psi_0(x)=\sum_{n=0}^\infty b_{2n}\left| x\right|_p^{2n}$$
and solve for the coefficients $b_{2n}$. However, a computation reveals that this series won't converge everywhere in $\mathbb{Q}_p$. We then make a second, successful guess by writing a series that has a better chance of convergence
\begin{align*}
\psi_0(x) &= \sum_{n=0}^{\infty} c_n f_n (x) + \sum_{n=1}^{\infty} k_n g_{-n}(x) \\
&= \begin{cases} \sum_{n=0}^{\infty} c_n |x|_p^n & \text{ if } x \in \mathbb{Z}_p \\ \sum_{n=1}^{\infty} k_n |x|_p^{-n} & \text{ otherwise}. \end{cases}
\end{align*} 
After determining the recursive relationships of the series' coefficients, we use asymptotic methods to approximate the eigenvalue E of the solution and the explicit expression of $c_n$ and $k_n$, in the limit of $p$ large. We then successfully prove our series converges everywhere in $\mathbb{Q}_p$ in the case of $B=1$. Using similar methods, we find a solution for the case $B=-1$. %Finally, we make a claim that all $p$-adic constant multiples of the original solution $\psi_0$ are solutions to the $p$-adic Schrodinger Equation.

To summarize, basic definitions and topological properties of the field of $p$-adic numbers are introduced and solutions to the $p$-adic analog of the Schr\"{o}dinger equation are explored in light of the $p$-adic derivative operator and $p$-adic calculus. Whether there exists a $p$-adic version of the raising and lowering operators remains an open question and could be a direction for future research. The power series expression of the solution as this thesis finds could be a good inspiration for finding more solutions.

\section{The field of $p$-adic numbers}

\subsection{The $p$-adic norm}
We start by introducing the definition of a norm on a general field $K$. We then introduce the $p$-adic norm $|\cdot|_p$ on the field of rational numbers $\mathbb{Q}$.

\vspace{0.5cm}

\begin{mydef}
We say $|\cdot|$ is a norm on a field K if for all $x,y\in K$ it satisfies the following properties: 

1. $\left| x\right| \geq0$

2. $\left| x\right|=0$ if and only if $x=0$

3. $\left| xy\right|=\left| x\right|\left| y\right|$

4. $\left| x+y\right|\leq\left| x\right|+\left|y\right|$
\end{mydef}

\vspace{0.5cm}

\begin{thm}
Let $p$ be a prime number, and $x\in\mathbb{Q}\setminus \{0\}$. Then there exists a unique $\gamma\in\mathbb{Z}$ such that x=p$^\gamma$$\frac{m}{n}$, where m,n$\in\mathbb{Z}$ are not divisible by $p$. 
\end{thm}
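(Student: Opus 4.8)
The plan is to split the statement into an existence part and a uniqueness part, both of which rest on unique factorization of integers together with the primality of $p$. For existence I would start from an arbitrary fractional representation $x = a/b$ with $a, b \in \mathbb{Z}\setminus\{0\}$, extract the exact power of $p$ dividing the numerator and the denominator separately, and collect the leftover $p$-free factors. For uniqueness I would compare two putative representations and invoke the fact that $p$, being prime, cannot divide a product of integers each coprime to $p$.

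Concretely, for existence the fundamental theorem of arithmetic lets me write $a = p^{\alpha} m$ and $b = p^{\beta} n$, where $\alpha, \beta \geq 0$ are the exponents of $p$ occurring in the factorizations of $a$ and $b$, and where $m, n \in \mathbb{Z}$ satisfy $p \nmid m$ and $p \nmid n$. Dividing gives $x = p^{\alpha - \beta}\, \tfrac{m}{n}$, so setting $\gamma = \alpha - \beta \in \mathbb{Z}$ produces the required form. I would emphasize that $\gamma$ may be negative, which is precisely why the theorem quantifies over $\gamma \in \mathbb{Z}$ rather than over $\gamma \in \mathbb{N}$.

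For uniqueness, suppose $x = p^{\gamma_1}\tfrac{m_1}{n_1} = p^{\gamma_2}\tfrac{m_2}{n_2}$ with $p$ dividing none of $m_1, n_1, m_2, n_2$, and assume without loss of generality that $\gamma_1 \geq \gamma_2$. Cross-multiplying yields $p^{\gamma_1 - \gamma_2}\, m_1 n_2 = m_2 n_1$. The right-hand side is a product of integers coprime to $p$ and is therefore itself not divisible by $p$; yet if $\gamma_1 > \gamma_2$ the left-hand side is divisible by $p$, a contradiction. Hence $\gamma_1 = \gamma_2$, which establishes uniqueness of the exponent. (Note that the theorem asserts uniqueness of $\gamma$ only, and indeed the pair $m, n$ is not unique, so no further argument on $m, n$ is needed.)

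I expect the only genuine subtlety to be the step in the uniqueness argument where I use that a product of integers none of which is divisible by $p$ is again not divisible by $p$; this is exactly where the primality of $p$ is essential, via Euclid's lemma, and the claim would fail for a composite modulus. Everything else amounts to bookkeeping with the fundamental theorem of arithmetic, so the main care is simply to record the factorizations cleanly and to track the signs of the exponents.
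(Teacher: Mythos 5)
Your proof is correct, and while it rests on the same foundation as the paper's argument (the fundamental theorem of arithmetic), its organization is genuinely different and, frankly, tighter. The paper splits into four cases depending on whether the numerator and denominator of $x$ equal $1$, and in each case reads off $\gamma$ as the exponent of $p$ appearing in the prime factorization; its uniqueness argument is then a somewhat informal remark that any other choice of $\gamma$ would force $m$ or $n$ to be divisible by $p$. You instead give a single uniform existence argument --- factor $a = p^{\alpha}m$ and $b = p^{\beta}n$ and set $\gamma = \alpha - \beta$ --- which subsumes all of the paper's cases at once, and your uniqueness step is a genuinely different and cleaner argument: cross-multiplying two putative representations to get $p^{\gamma_1 - \gamma_2} m_1 n_2 = m_2 n_1$ and invoking Euclid's lemma to conclude $\gamma_1 = \gamma_2$. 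What your route buys is brevity and a precise identification of where primality enters (a product of integers coprime to $p$ is coprime to $p$); what the paper's route buys is essentially nothing beyond explicitness, at the cost of repetition. Your closing observation that only $\gamma$, and not the pair $(m,n)$, is claimed to be unique is also a point the paper leaves implicit.
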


\begin{proof}
Fix $x\in\mathbb{Q}\setminus \{0 \}$ and a prime number $p$. Then, $x=\frac{r}{q}$ for some co-prime $r,q\in\mathbb{Z}$. 

If $x=1$, then $\gamma=0$ and $m=n=1$ obviously satisfy the condition.
We need to verify $\gamma=0$ is unique. We prove by contradiction by assuming $\gamma$$\neq$0.

Suppose $\gamma>0$, and $n=p^\gamma m$. Then $n$ is divisible by $p$. This contradicts the assumption that $n$ is not divisible by $p$. Suppose instead $\gamma<0$, and $m=p^{\shortminus\gamma}n$. Then $m$ is divisible by $p$. This contradicts the assumption that $m$ is not divisible by $p$.
Thus, for $x=1$, $\gamma=0$ exists and is unique. 

\vspace{0.3cm}
If $q=1$ and $r\neq1$, we must have $n=1$, $x=p^\gamma m$. We use the Fundamental Theorem of Arithmetic and have $\left| x\right|=p_1^{a_1} \dotsb p_k^{a_k}$ for unique prime numbers 
$p_1,\dotsb, p_k$ and unique positive integers $a_1, \dotsb, a_n$.
If $x$ is co-prime to $p$, then $\gamma$ can only equal to $0$.
If $x$ is not co-prime to $p$, or equivalently, $p=p_j$ for some $j \in \{1,\dotsb,n\}$, then $\gamma$ can only equal to $a_j$. If not, $\gamma>a_j$ contradicts with the Fundamental Theorem of Arithmetic and $\gamma<a_j$ leads to $m$ divisible by $p$, contradicting with our assumption.  In this case, we find that $\gamma$ exists and is unique.

\vspace{0.3cm}

If $q\neq1$ and $r=1$, then we apply the Fundamental Theorem of Arithmetic on $q$, we have that $$\left| x\right|=\frac{1}{q}=p_1^{a_1}\dotsb p_k^{a_k}$$
for unique prime numbers $p_1, \dotsb, p_k$ and negative integers $a_1, \dotsb, a_k$. If $q$ is co-prime to $p$, then $\gamma=0$. If $p=p_j$ for some $j$ in $\{1,\dotsb,k\}$, then $\gamma=a_j$. We find that $\gamma$ exists and is unique.

\vspace{0.3cm}

If $q\neq1$, $r\neq1$, we have 
\begin{equation*}
\left| r\right|=p_1^{a_1}\dotsb p_n^{a_n} \text{ and } \left| q\right|=p_{n+1}^{a_{n+1}}\dotsb p_{n+m}^{a_{n+m}}
\end{equation*}

for unique primes $p_1, \dotsb, p_{n+m}$ and unique positive integers 

$a_1, \dotsb, a_{n+m}$. 

Thus, 
\begin{equation*}
\left| x\right|=p_1^{a_1}\dotsb p_k^{a_k}p_{k+1}^{-a_{k+1}}\dotsb p_{k+l}^{-a_{k+l}}.
\end{equation*}
If $p\neq p_j$ for any $j$ in $\{1,\cdots,k\}$, then $\gamma=0$. If $p=p_j$ for some $j$ in $\{1,\dotsb,k\}$, then $\gamma=a_j$. If $j$ is in $\{k+1,\dotsb,k+l\}$, then $\gamma={-a_j}$. Thus, $\gamma$ exists. The uniqueness can be proved by the similar reasoning as before, i.e. any other $\gamma$ will lead to $m$, $n$ not coprime.
\end{proof}

\vspace{0.3cm}

Theorem 1 guarantees that we can define a map on the field $\mathbb{Q}$ as follows.
\begin{mydef}
Define the map $|\cdot|_p: \mathbb{Q}\rightarrow\mathbb{Q}$ as 
\begin{equation*}
|x|_p = \begin{cases} 0 &\text{ if } x=0  \\ p^{\shortminus\gamma} & \text{ otherwise} \end{cases},
\end{equation*}
where $\gamma$ is a constant integer determined by Theorem 1.
\end{mydef}

\begin{ex}

\begin{itemize}
\item[(i)] $\left| 114514\right|_2$=$\left| 2\times57257\right|_2$=2$^{\shortminus1}$
\item[(ii)] $\left| \frac{1919}{810}\right|_5$=$\left| \frac{1919}{5\times162}\right|_5$=5$^1$.
\end{itemize}
\end{ex}

We want to verify that $\left| \cdot \right|_p$ is a norm on $\mathbb{Q}$. Since $\left| \cdot \right|_p$ obviously satisfies conditions 1 and 2 in Definition 1, we check the third and fourth conditions of Definition 1 below.

\begin{thm}
Let $p$ be a prime. The map $|\cdot|_p$ is a norm on $\mathbb{Q}$ (called the $p$-adic norm). Moreover, we have that 
\begin{equation*}
|x+y|_p \leq \text{max}(|x|_p, |y|_p), 
\end{equation*}
for all $x,y \in \mathbb{Q}$, and if $|x|_p \neq |y|_p$ we have 
\begin{equation*}
|x+y|_p = \text{max}(|x|_p, |y|_p).
\end{equation*}
\end{thm}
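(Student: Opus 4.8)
The plan is to reduce everything to the $p$-adic valuation supplied by Theorem 1. Both nontrivial conditions are immediate when $x$ or $y$ (or $x+y$) is zero, since one side of each (in)equality then vanishes, so I would dispose of those cases first and afterwards assume $x,y\neq 0$. By Theorem 1 write $x=p^{\alpha}\tfrac{m_1}{n_1}$ and $y=p^{\beta}\tfrac{m_2}{n_2}$ with $\alpha,\beta\in\mathbb{Z}$ uniquely determined and with $m_1,m_2,n_1,n_2$ all coprime to $p$, so that $|x|_p=p^{\shortminus\alpha}$ and $|y|_p=p^{\shortminus\beta}$ by Definition 2.

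For the multiplicative property I would compute
\begin{equation*}
xy=p^{\alpha+\beta}\,\frac{m_1 m_2}{n_1 n_2}.
\end{equation*}
Euclid's lemma guarantees that $p\nmid m_1 m_2$ and $p\nmid n_1 n_2$, so this is exactly the representation from Theorem 1 with exponent $\alpha+\beta$; the uniqueness clause then forces the valuation of $xy$ to equal $\alpha+\beta$, whence $|xy|_p=p^{\shortminus(\alpha+\beta)}=|x|_p|y|_p$, establishing condition 3.

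For the ultrametric estimate I would assume without loss of generality that $\alpha\leq\beta$, so that $\max(|x|_p,|y|_p)=p^{\shortminus\alpha}$, and write
\begin{equation*}
x+y=p^{\alpha}\cdot\frac{m_1 n_2+p^{\beta-\alpha}m_2 n_1}{n_1 n_2}.
\end{equation*}
Since $\beta-\alpha\geq 0$ the numerator $N=m_1 n_2+p^{\beta-\alpha}m_2 n_1$ is an integer and the denominator $n_1 n_2$ is coprime to $p$; hence the valuation of $x+y$ equals $\alpha$ plus the nonnegative valuation of $N$, which is at least $\alpha$, giving $|x+y|_p\leq p^{\shortminus\alpha}=\max(|x|_p,|y|_p)$. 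For the equality clause, suppose $|x|_p\neq|y|_p$, i.e. $\alpha<\beta$ strictly; then $p\mid p^{\beta-\alpha}m_2 n_1$ while $p\nmid m_1 n_2$, so reducing modulo $p$ shows $N\equiv m_1 n_2\not\equiv 0$, meaning $N$ is coprime to $p$ and the valuation of $x+y$ is exactly $\alpha$, so $|x+y|_p=p^{\shortminus\alpha}=\max(|x|_p,|y|_p)$. Finally condition 4 of Definition 1 is immediate, since $\max(a,b)\leq a+b$ for nonnegative reals, so that $|x+y|_p\leq\max(|x|_p,|y|_p)\leq|x|_p+|y|_p$.

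The algebraic manipulations are routine; the step needing the most care is keeping the coprimality bookkeeping correct so that the uniqueness in Theorem 1 applies—specifically invoking Euclid's lemma to see that products of $p$-coprime integers remain $p$-coprime, and isolating the strict inequality $\alpha<\beta$ to obtain the mod-$p$ nonvanishing of $N$ in the equality case.
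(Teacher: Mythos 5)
Your proposal is correct and follows essentially the same route as the paper: reduce to the valuation from Theorem 1, multiply the representations for condition 3, and put $x+y$ over a common denominator with the smaller exponent factored out for the ultrametric bound. The only difference is cosmetic — you unify the paper's two cases ($\gamma_1=\gamma_2$ and $\gamma_1\neq\gamma_2$) by observing that the numerator $N$ has nonnegative valuation, and you supply the mod-$p$ justification for $p\nmid N$ in the strict case, which the paper asserts with ``certainly.''
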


\begin{proof}  When either $x=0$ or $y=0$, it is easy to observe that $$\left| xy\right|_p=\left| x\right|_p\left| y\right|_p=0.$$

Thus, fix $x, y\in\mathbb{Q}\setminus \left\{ 0 \right\}$.
Since $x=p^{\gamma_1}\frac{m}{n}$ and $y=p^{\gamma_2}\frac{a}{b}$, for some $m,n,a,b\in\mathbb{Z}$ not divisible by $p$, and unique $\gamma_1$, $\gamma_2\in\mathbb{Z}$ we have $$\left| x\right|_p\left| y\right|_p=p^{\shortminus(\gamma_1+\gamma_2)}.$$

Since $xy=p^{(\gamma_1+\gamma_2)}\frac{ma}{nb}$, and $ma, nb$ are not divisible by $p$, we have $$\left| xy\right|_p=p^{\shortminus(\gamma_1+\gamma_2)}.$$
Thus, we have shown $\left| xy\right|_p=\left| x\right|_p\left| y\right|_p$.

\vspace{0.5cm}

We now check the fourth condition. When either $x=0$ or $y=0$, it is easy to see $\left| x+y\right|_p=\left| x\right|_p+\left| y\right|_p$.

\vspace{0.3cm}

Fix $x,y\in\mathbb{Q}\setminus\left\{ 0 \right\}$. We write $x=p^{\gamma_1}\frac{m}{n}$ and $y=p^{\gamma_2}\frac{a}{b}$ for some integers $m,n,a,b$ not divisible by $p$, and unique $\gamma_1$, $\gamma_2\in\mathbb{Z}$. 

When $\gamma_1\neq\gamma_2$, we assume $\gamma_1<\gamma_2$ without loss of generality.
Then,
\begin{equation}
x+y=p^{\gamma_1}\frac{m}{n}+p^{\gamma_2}\frac{a}{b}=p^{\gamma_1}(\frac{m}{n}+p^{\gamma_2-\gamma_1}\frac{a}{b})=p^{\gamma_1}\frac{mb+p^{\gamma_2-\gamma_1}an}{nb}.
\end{equation}
Certainly, $mb+anp^{\gamma_2-\gamma_1}$ and $nb$ are not divisible by $p$. Thus, $$\left| x+y\right|_p=p^{\shortminus\gamma_1}.$$
Since $\left| x\right|_p+\left| y\right|_p=p^{\shortminus\gamma_1}+p^{\shortminus\gamma_2}$ and $p^{\shortminus\gamma_1}$, $p^{\shortminus\gamma_2}>0$, we have $$\left| x+y\right|_p<\left| x\right|_p+\left| y\right|_p.$$
In fact, we see $\left| x+y\right|_p=max(\left| x\right|_p,\left| y\right|_p)$ whenever $\left| x\right|_p\neq\left| y\right|_p$.

When $\gamma_1=\gamma_2$, 
\begin{equation}
x+y=p^{\gamma_1}\frac{m}{n}+p^{\gamma_1}\frac{a}{b}=p^{\gamma_1}\frac{mb+an}{nb}.
\end{equation}
 Since $nb$ is not divisible by $p$, and $mb+na$ is possibly divisible by $p$, then $\gamma$ for $(x+y)$ is greater than or equal to $\gamma_1$.
Thus, 
\begin{equation}
\left| x+y\right|_p=p^{\shortminus\gamma}\leq{p^{\shortminus\gamma_1}}=\left| x\right|_p<{\left| x\right|_p+\left| y\right|_p}.
\end{equation}

This verifies that the function $\left| \cdot \right|_p$ on $\mathbb{Q}$ is a norm. %We call it the $p$-adic norm.
\end{proof}

\begin{mydef}
Two norms $|\cdot|_a$, $|\cdot|_b$ on a field $\mathbb{F}$ are equivalent if there exists $\epsilon>0$ such that $|\cdot|_a^\epsilon=|\cdot|_b$.
\end{mydef}

\begin{thm} (Ostrowski's Theorem)
Every nonzero absolute value on $\mathbb{Q}$ is equivalent to either the standard absolute value or one of the $p$-adic norms.
\end{thm}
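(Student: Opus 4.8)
The plan is to split into two cases according to how $|\cdot|$ behaves on the positive integers, since a multiplicative norm on $\mathbb{Q}$ is determined by its values there. First I would record the trivial reductions: $|1| = |{-1}| = 1$ because $|1| = |1|^2 \neq 0$, and then $|x| = |m|/|n|$ for $x = m/n$, so it suffices to understand $|n|$ for positive integers $n$. I would also assume $|\cdot|$ is nontrivial (not identically $1$ on nonzero rationals), since the trivial norm is excluded. The dichotomy is then: either there is an integer $n_0 > 1$ with $|n_0| > 1$ (the \emph{Archimedean} case), or $|n| \le 1$ for every integer $n$ (the \emph{non-Archimedean} case).

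In the Archimedean case I would show $|\cdot|$ is equivalent to the standard absolute value $|\cdot|_\infty$. The key device is the base-$n$ expansion: for integers $m, n \ge 2$ write $m = \sum_{i=0}^k a_i n^i$ with $0 \le a_i < n$ and $k \le \log m / \log n$, then bound $|m| \le \sum_i |a_i|\,|n|^i \le n(k+1)\max(1,|n|)^k$ using $|a_i| \le a_i < n$. Replacing $m$ by $m^t$, taking $t$-th roots, and letting $t \to \infty$ kills the prefactor that is polynomial in $t$ and yields the clean inequality $|m| \le \max(1,|n|)^{\log m / \log n}$. Feeding in the hypothesis $|n_0| > 1$ forces $|n| > 1$ for all $n \ge 2$; then the inequality is symmetric in $m$ and $n$, giving $|m|^{1/\log m} = |n|^{1/\log n}$, a constant $e^c$ with $c > 0$. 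Hence $|n| = n^c$ for all positive integers, so $|x| = |x|_\infty^c$ on all of $\mathbb{Q}$, which is the required equivalence.

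In the non-Archimedean case I would instead produce a prime $p$. Since $|n| \le 1$ for all integers, the set $I = \{ n \in \mathbb{Z} : |n| < 1 \}$ is an ideal of $\mathbb{Z}$; it is proper (as $|1| = 1$) and nonzero (as $|\cdot|$ is nontrivial), and it is prime because $|\cdot|$ is multiplicative, so $I = (p)$ for a unique prime $p$. Writing any integer as $p^{v_p(n)} u$ with $p \nmid u$ gives $|u| = 1$ and $|n| = |p|^{v_p(n)}$ with $0 < |p| < 1$; extending multiplicatively to $\mathbb{Q}$ shows $|x| = |p|^{v_p(x)}$. Choosing $\epsilon$ so that $|p| = (p^{-1})^{\epsilon} = p^{-\epsilon}$ matches $|x|_p = p^{-v_p(x)}$, giving $|x| = |x|_p^{\epsilon}$, the desired equivalence.

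The main obstacle is the analytic heart of the Archimedean case: justifying the passage from the crude expansion bound to $|m| \le \max(1,|n|)^{\log m/\log n}$ via the power trick, and then correctly exploiting symmetry only after confirming $|n| > 1$ for every $n \ge 2$. Everything in the non-Archimedean case is algebraic and comparatively routine once the prime ideal is identified; the one subtlety there is to remember to exclude the trivial norm at the outset, since otherwise $I = (0)$ and no prime emerges.
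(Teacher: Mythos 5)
The paper does not actually prove this theorem---it only cites page 3 of Vladimirov--Volovich--Zelenov---so there is no in-paper argument to compare against. Your sketch is the standard Ostrowski argument (Archimedean/non-Archimedean dichotomy on the integers, base-$n$ expansion plus the $t$-th power trick in the first case, a prime ideal of $\mathbb{Z}$ in the second), and the Archimedean half is laid out correctly, including the two subtle points: taking $t$-th roots to kill the polynomial prefactor, and verifying $|n|>1$ for \emph{all} $n\ge 2$ before symmetrizing.

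There is one genuine gap in the non-Archimedean half: you assert that $I=\{n\in\mathbb{Z}:|n|<1\}$ is an ideal, but closure under addition does not follow from the triangle inequality of Definition~1 alone --- from $|m|<1$ and $|n|<1$ it only gives $|m+n|\le |m|+|n|<2$. You need to first upgrade the hypothesis ``$|n|\le 1$ for all integers'' to the ultrametric bound, e.g.\ via another instance of the power trick you already used in the Archimedean case:
\begin{equation*}
|m+n|^t=\Bigl|\sum_{i=0}^{t}\binom{t}{i}m^i n^{t-i}\Bigr|\le (t+1)\max(|m|,|n|)^t,
\end{equation*}
using $\bigl|\binom{t}{i}\bigr|\le 1$, then take $t$-th roots and let $t\to\infty$ to get $|m+n|\le\max(|m|,|n|)$. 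With that in hand $I$ is indeed a proper, nonzero, prime ideal and the rest of your argument (writing $n=p^{v_p(n)}u$, $|u|=1$, $|x|=|p|^{v_p(x)}=|x|_p^{\epsilon}$ with $\epsilon=-\log|p|/\log p>0$) goes through. Everything else, including the initial reductions $|1|=|-1|=1$ and the exclusion of the trivial norm so that $I\neq(0)$, is correct.
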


\begin{remark}
The proof of Theorem 3 can be found on page 3 of \cite{Vladimirov}. This theorem tells us the significance of the $p$-adic norm in understanding the topology of the field of rational numbers. 
\end{remark}

\subsection{The field of $p$-adic numbers}

In this section, we introduce a notion of convergence with respect to the $p$-adic norm, Cauchy sequences, and the completion of $\mathbb{Q}$ with respect to $|\cdot|_p$. We then study properties of the field of $p$-adic numbers $\mathbb{Q}_p$, for example, the $p$-adic expansion of an element in $\mathbb{Q}_p$.

\begin{mydef}
Fix a normed field $(K, \left| \cdot \right|)$. A sequence $(x_n)$ in $K$ is Cauchy with respect to the given norm if for every $\epsilon>0$, there exists some $N\in\mathbb{N}$ such that for all $m,n\geq N$, $$\left| x_n-x_m\right|<\epsilon.$$
\end{mydef}

\begin{mydef}
Fix a normed field $(K, \left| \cdot \right|)$. A sequence $(x_n)$ in $K$ is convergent with respect to the given norm if there exists $b\in K$ such that for every $\epsilon>0$, there exists some $N\in\mathbb{N}$ with $$|x_n-b|<\epsilon$$
for all $n\geq N$.
We call $b$ the limit of the sequence.
\end{mydef}

\begin{mydef}
Fix a normed field $(K, \left| \cdot \right|)$. We say $K$ is complete if every Cauchy sequence has a limit in $K$.
\end{mydef}

\begin{thm}
Fix a normed field $(K, \left| \cdot \right|)$. Then there exists a unique complete normed field $(K', \left| \cdot \right|')$ up to isomorphism that extends $K$. Here $\left| \cdot \right|$' restricts to $\left| \cdot\right|$. Moreover, $K$ is dense in $K'$.
\end{thm}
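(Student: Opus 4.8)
The plan is to realize $K'$ concretely as the set of Cauchy sequences in $K$ modulo the sequences that tend to zero, equipped with termwise operations. First I would let $\mathcal{C}$ denote the collection of all Cauchy sequences $(x_n)$ in $K$, and observe that under termwise addition and multiplication $\mathcal{C}$ forms a commutative ring with identity (the constant sequence $1$); the only point requiring care is that the product of two Cauchy sequences is again Cauchy, which follows because a Cauchy sequence is bounded in norm. I would then single out the subset $\mathcal{N} \subset \mathcal{C}$ of null sequences, those with $|x_n| \to 0$, and check that $\mathcal{N}$ is an ideal of $\mathcal{C}$. Setting $K' = \mathcal{C}/\mathcal{N}$ and writing $[(x_n)]$ for the class of a sequence, the map $\iota \colon K \to K'$ sending $a$ to the class of the constant sequence $(a)$ is an injective ring homomorphism, and I would henceforth identify $K$ with its image.

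The key algebraic step is to show $K'$ is a field, i.e. that $\mathcal{N}$ is a maximal ideal. For this I would show that any Cauchy sequence $(x_n) \notin \mathcal{N}$ is eventually bounded away from $0$: since $(x_n)$ is not null there is some $c > 0$ and infinitely many $n$ with $|x_n| \geq c$, and the Cauchy property then forces $|x_n| \geq c/2$ for all large $n$. Modifying finitely many early terms (which changes the sequence only within its $\mathcal{N}$-class), I may assume $x_n \neq 0$ for all $n$, and then verify that $(x_n^{-1})$ is Cauchy and furnishes a multiplicative inverse for $[(x_n)]$. To put a norm on $K'$ I would define $|[(x_n)]|' = \lim_n |x_n|$; this limit exists because the estimate $\big| |x_n| - |x_m| \big| \leq |x_n - x_m|$ shows $(|x_n|)$ is a Cauchy sequence of real numbers, and it is independent of the representative since two equivalent sequences differ by a null sequence. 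Checking that $|\cdot|'$ satisfies the four norm axioms is then routine and inherited from $|\cdot|$, and by construction $|\iota(a)|' = |a|$, so $|\cdot|'$ restricts to $|\cdot|$ on $K$. Density of $K$ in $K'$ is immediate: given $[(x_n)]$ and $\epsilon > 0$, the constant sequence $(x_N)$ for large $N$ approximates it within $\epsilon$, by the Cauchy condition on $(x_n)$.

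The main obstacle is proving that $K'$ is complete, since here one must handle a Cauchy sequence whose terms are themselves equivalence classes of sequences. Given a Cauchy sequence $(\xi_k)$ in $K'$, I would use density to choose, for each $k$, an element $a_k \in K$ with $|\xi_k - \iota(a_k)|' < 1/k$. A short triangle-inequality estimate shows $(a_k)$ is Cauchy in $K$, so it defines a class $\xi = [(a_k)] \in K'$, and I would then show $\xi_k \to \xi$ in $K'$ by combining $|\xi_k - \iota(a_k)|' < 1/k$ with the fact that $|\iota(a_k) - \xi|'$ is small for large $k$. This diagonal passage from classes back to representatives in $K$ is the delicate bookkeeping step.

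Finally, for uniqueness I would suppose $(K'', |\cdot|'')$ is another complete normed field extending $K$ in which $K$ is dense. Using density, every element of $K'$ is a limit of a sequence from $K$; I would define a map $K' \to K''$ by sending such a limit to the limit in $K''$ of the same $K$-sequence, using completeness of $K''$ to guarantee the target limit exists, and checking that the assignment is well-defined, norm-preserving, and a field homomorphism. By symmetry the same construction produces an inverse, yielding an isometric isomorphism fixing $K$, which establishes uniqueness up to isomorphism.
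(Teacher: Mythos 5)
Your proposal is correct, and it is the standard quotient-of-Cauchy-sequences construction. The paper does not actually prove this theorem: it simply cites Theorem 3.4 of the Pomerantz reference, so your write-up supplies a self-contained argument where the paper has none, and it follows the same route that the cited source takes. All the genuinely delicate points are addressed: boundedness of Cauchy sequences to get closure of $\mathcal{C}$ under products, the ``eventually bounded away from zero'' argument to show $\mathcal{N}$ is maximal, well-definedness of $|\cdot|'$ via the reverse triangle inequality, and the diagonal argument for completeness. One remark worth making explicit: as literally stated, the theorem's uniqueness claim is false without a density hypothesis on the competitor (e.g.\ both $\mathbb{R}$ and $\mathbb{C}$ are complete normed fields extending $\mathbb{Q}$ with the usual absolute value); you correctly restore this by assuming $K$ is dense in $K''$ before building the isometric isomorphism, which is the intended reading of the statement.
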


\begin{proof}
See Theorem 3.4 on p5 of \cite{Alexa}
\end{proof}

By the uniqueness of the completion of a normed field in Theorem 4, we have the following definition.

\begin{mydef}
We denote by $\mathbb{Q}_p$ the completion of $\mathbb{Q}$ with respect to the $p$-adic norm $\left| \cdot \right|_p$. We call $\mathbb{Q}_p$ the field of $p$-adic numbers.
\end{mydef}

\begin{thm}[Convergence implies Cauchy] Any convergent sequence in $\mathbb{Q}_p$ is Cauchy.
\end{thm}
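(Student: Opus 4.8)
The plan is to unpack the definition of convergence (Definition 5) and combine it with the triangle inequality established for $|\cdot|_p$. Suppose $(x_n)$ converges to some limit $b \in \mathbb{Q}_p$. I want to show that for every $\epsilon > 0$ there exists $N \in \mathbb{N}$ such that $|x_n - x_m|_p < \epsilon$ for all $m, n \geq N$.

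First I would fix $\epsilon > 0$ and apply the definition of convergence directly. Since $(x_n)$ converges to $b$, there is some $N \in \mathbb{N}$ with $|x_n - b|_p < \epsilon$ for all $n \geq N$. The guiding idea is that once every term past $N$ lies within $\epsilon$ of the common limit $b$, the terms must also be close to one another.

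Next I would estimate $|x_n - x_m|_p$ for arbitrary $m, n \geq N$ by inserting and subtracting $b$, writing $x_n - x_m = (x_n - b) + (b - x_m)$. Here the $p$-adic strong triangle inequality from Theorem 2 is especially convenient, since it gives $|x_n - x_m|_p \leq \max(|x_n - b|_p, |x_m - b|_p) < \epsilon$ at once, without the customary $\epsilon/2$ splitting required in the real (archimedean) setting. This immediately yields the Cauchy condition of Definition 4.

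There is no serious obstacle in this argument; the only point requiring attention is that the single threshold $N$ produced by convergence serves both indices $m$ and $n$ simultaneously, so one application of the definition suffices. The ultrametric inequality streamlines the final estimate, but even the ordinary triangle inequality (condition 4 of Definition 1) would close the argument after replacing $\epsilon$ with $\epsilon/2$ in the convergence step.
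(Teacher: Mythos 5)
Your proposal is correct and follows essentially the same route as the paper's proof: insert the limit $b$ into $x_n - x_m$ and apply the ultrametric inequality $|x_n - x_m|_p \leq \max(|x_n - b|_p, |x_m - b|_p) < \epsilon$ with a single threshold $N$. No gaps.
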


\begin{proof}
Fix $(a_k)_{k\in\mathbb{N}}$ a convergent sequence in $\mathbb{Q}_p$. Then, there exists $a\in\mathbb{Q}_p$ such that for every $\epsilon>0$, there exists $N\in\mathbb{N}$ such that for any $k\geq N$, $$\left| a_k-a\right|_p<\epsilon.$$ 
Fix $\epsilon>0$ so that the corresponding $N$ is fixed. Fix $n,m \geq N$. Then, $\left| a_n-a\right|_p<\epsilon$ and $\left| a_m-a\right|_p<\epsilon$.

We find $$\left| a_n-a_m\right|_p=\left| a_n-a+a-a_m\right|_p\leq max(\left| a_n-a\right|_p, \left| a_m-a\right|_p)<\epsilon.$$
\end{proof}

\begin{thm}
Let $(a_k)_{k\in\mathbb{N}}$ be a sequence in $\mathbb{Q}_p$. Then $(a_k)$ is Cauchy if and only if for every $\epsilon>0$, there exists some $N\in\mathbb{N}$ such that for all $k\geq N$, $$\left| a_{k+1}-a_{k}\right|_p<\epsilon.$$ 
\end{thm}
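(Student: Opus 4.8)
The plan is to prove the two implications separately. The forward direction is an immediate consequence of the definition of Cauchy, while the reverse direction is the substantive one and relies crucially on the strong triangle inequality $|x+y|_p \le \max(|x|_p,|y|_p)$ established in Theorem 2 (which extends from $\mathbb{Q}$ to $\mathbb{Q}_p$ by density and continuity of the norm, and is already used freely in the proof of Theorem 5). This non-archimedean feature is exactly what makes the statement work: over $\mathbb{R}$, consecutive differences tending to zero does \emph{not} force a sequence to be Cauchy, but the ultrametric inequality will rescue us here.

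For the forward direction, I would start from a Cauchy sequence $(a_k)$, fix $\epsilon > 0$, and obtain the corresponding $N$ with $|a_n - a_m|_p < \epsilon$ for all $m,n \ge N$. Specializing to $n = k+1$ and $m = k$ with $k \ge N$ immediately yields $|a_{k+1} - a_k|_p < \epsilon$, which is precisely the desired conclusion.

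For the reverse direction, suppose that for every $\epsilon > 0$ there is an $N$ with $|a_{k+1} - a_k|_p < \epsilon$ for all $k \ge N$. Fix such an $\epsilon$ and $N$, and take any $m > n \ge N$ (the case $m = n$ is trivial, and $m < n$ follows by symmetry). I would then write the telescoping identity
\[
a_m - a_n = \sum_{j=n}^{m-1} (a_{j+1} - a_j),
\]
and apply the strong triangle inequality in its extended, finite-sum form
\[
\left| \sum_{j=n}^{m-1} (a_{j+1} - a_j) \right|_p \le \max_{n \le j \le m-1} |a_{j+1} - a_j|_p .
\]
Since every index $j$ in this range satisfies $j \ge n \ge N$, each term on the right is bounded by $\epsilon$, so the maximum is $< \epsilon$. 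This gives $|a_m - a_n|_p < \epsilon$ for all $m,n \ge N$, which is exactly the Cauchy property.

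The main obstacle — really the only nontrivial point — is justifying the passage from the two-term inequality to the finite-sum version used above. I would handle this by a short induction on the number of summands: the base case is Theorem 2, and the inductive step applies the two-term inequality to the partial sum together with the next term, using $\max(\max(A), B) = \max(A \cup \{B\})$. It is worth emphasizing that this max-bound, rather than the additive bound of the ordinary triangle inequality, is what prevents $|a_m - a_n|_p$ from accumulating as $m - n$ grows, and is the reason the statement holds $p$-adically but fails over $\mathbb{R}$.
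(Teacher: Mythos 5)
Your proof is correct. The paper does not actually supply an argument for this theorem --- it only cites page 6 of the Pomerantz notes --- so your write-up fills a gap rather than duplicating anything: the telescoping decomposition $a_m - a_n = \sum_{j=n}^{m-1}(a_{j+1}-a_j)$ combined with the finite-sum form of the ultrametric inequality is exactly the standard argument, and your attention to the two points the paper glosses over (that the strong triangle inequality persists on the completion $\mathbb{Q}_p$, and that the two-term inequality extends to finitely many terms by induction) is well placed. No gaps.
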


\begin{proof}
The proof can be found on page 6 of \cite{Alexa}.
\end{proof}

\begin{prop}
For any sequence $(a_k)_{k\in\mathbb{N}\cup \{0\}}$ in $\mathbb{Q}_p$, $\sum_{k=0}^\infty a_k$ converges if and only if $\lim_{k\shortrightarrow\infty}a_k=0$.

\begin{proof}
Fix $(a_k)_{k\in\mathbb{N}}$ a sequence in $\mathbb{Q}_p$,  and $(s_k)$ the sequence of partial sums, i.e. $s_k=a_0 + \dotsb + a_{k-1}$.

($\Rightarrow$) Assume $\sum_{k=0}^\infty a_k$ converges. Then by Theorem 5, the sequence of the partial sums $(s_k)$ is Cauchy. Fix $\epsilon>0$. By Theorem 6, there exists some $N\in\mathbb{N}$ such that for all $k\geq N$, $|s_{k+2}-s_{k+1}|_p=\left| a_{k+1} \right|_p<\epsilon$. Thus, for all $k\geq N+1$, we have $\left| a_k \right|_p<\epsilon$, which is equivalent to $lim_{k\shortrightarrow\infty}a_k=0$.

($\Leftarrow$) Assume $\lim_{k\shortrightarrow\infty}a_k=0$. Fix $\epsilon>0$. Then there exists some $N\in\mathbb{N}$ such that for all $k\geq N$, $\left| a_k \right|_p<\epsilon$. Thus, $\left| s_{k+1}-s_k \right|_p<\epsilon$ for all $k\geq N$. By Theorem 6, $(s_k)$ is Cauchy and therefore converges by the completeness of $\mathbb{Q}_p$. Thus, $\sum_{k=0}^\infty a_k$ converges.
\end{proof}

\end{prop}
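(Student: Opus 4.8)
The plan is to reduce the statement to Theorems 5 and 6 by passing to the sequence of partial sums $(s_k)$ defined by $s_k = a_0 + \dotsb + a_{k-1}$, exactly as in the statement's own notation. The key observation, which drives both directions, is that consecutive partial sums differ by a single term: $|s_{k+1} - s_k|_p = |a_k|_p$. This identity is what lets me translate between statements about the series and statements about the terms $a_k$, and it is precisely the quantity appearing in the consecutive-difference criterion of Theorem 6.

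For the forward direction, I would assume $\sum_{k=0}^\infty a_k$ converges, which by definition means the sequence $(s_k)$ converges in $\mathbb{Q}_p$. Theorem 5 then gives that $(s_k)$ is Cauchy, and the ``only if'' direction of Theorem 6 tells me that for every $\epsilon > 0$ there is an $N$ with $|s_{k+1}-s_k|_p < \epsilon$ for all $k \geq N$. By the identity above this reads $|a_k|_p < \epsilon$, which is exactly $\lim_{k\to\infty} a_k = 0$.

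For the reverse direction, I would assume $\lim_{k\to\infty} a_k = 0$. Then for every $\epsilon > 0$ there is an $N$ with $|s_{k+1}-s_k|_p = |a_k|_p < \epsilon$ for all $k \geq N$, so the ``if'' direction of Theorem 6 makes $(s_k)$ Cauchy. Invoking the completeness of $\mathbb{Q}_p$ (Definition 9 together with Theorem 4) gives that $(s_k)$ has a limit in $\mathbb{Q}_p$, which is by definition the convergence of $\sum_{k=0}^\infty a_k$.

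I do not expect a genuine obstacle here: the entire substance is packaged into Theorem 6, whose proof is where the ultrametric inequality $|x+y|_p \leq \max(|x|_p,|y|_p)$ does the real work (this is also why the analogous statement fails over $\mathbb{R}$). The only point requiring care is the index bookkeeping in $s_k = a_0 + \dotsb + a_{k-1}$, so that the telescoping difference $s_{k+1}-s_k$ correctly picks out $a_k$ and the two quantifier thresholds on $N$ line up; this is routine once the identity $|s_{k+1}-s_k|_p = |a_k|_p$ is established.
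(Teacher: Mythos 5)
Your proposal follows the paper's proof essentially verbatim: both directions pass to the partial sums $s_k = a_0 + \dotsb + a_{k-1}$, use Theorem 5 to get that a convergent $(s_k)$ is Cauchy, and invoke Theorem 6 together with the identity $|s_{k+1}-s_k|_p = |a_k|_p$ (the paper writes the shifted version $|s_{k+2}-s_{k+1}|_p = |a_{k+1}|_p$) plus completeness of $\mathbb{Q}_p$ for the converse. The argument is correct and the approach is the same.
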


\begin{prop}
Fix $p$ as a prime number. Any series in the form $$\sum_{k=0}^\infty x_k p^{k+\gamma},$$ with $\gamma\in\mathbb{Z}$, $x_k \in \mathbb{Z}$ such that $x_0>0$ and $0\leq{x_k}\leq{p-1}$, and $k\in\mathbb{N}\cup \{0\}$, converges in $\mathbb{Q}_p$.
\end{prop}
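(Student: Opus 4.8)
The plan is to reduce the statement to the convergence criterion just established in Proposition 1, which asserts that a series $\sum_{k=0}^\infty a_k$ converges in $\mathbb{Q}_p$ if and only if $\lim_{k\to\infty} a_k = 0$ (i.e. $|a_k|_p \to 0$). Accordingly, I would set $a_k = x_k\, p^{k+\gamma}$ and aim only to verify that these terms tend to $0$ in the $p$-adic norm; once that is done, Proposition 1 delivers convergence immediately.

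First I would compute the norm of a general term using the multiplicativity of $|\cdot|_p$ (condition 3 of Definition 1, verified in Theorem 2), together with the fact that $|p|_p = p^{-1}$ and hence $|p^{k+\gamma}|_p = p^{-(k+\gamma)}$. This gives $|a_k|_p = |x_k|_p\, p^{-(k+\gamma)}$. The next step, which is the one genuine point requiring care, is to bound $|x_k|_p$. Since each $x_k$ is an integer with $0 \le x_k \le p-1$, either $x_k = 0$, in which case $|x_k|_p = 0$, or $1 \le x_k \le p-1$, in which case $x_k$ is coprime to $p$ so that $\gamma = 0$ in Theorem 1 and $|x_k|_p = 1$. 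In every case $|x_k|_p \le 1$, and therefore $|a_k|_p \le p^{-(k+\gamma)}$.

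Finally, I would observe that $\gamma$ is a fixed integer independent of $k$, so $p^{-(k+\gamma)} = p^{-\gamma}\, p^{-k}$, and since $p > 1$ the factor $p^{-k}$ tends to $0$ as $k \to \infty$. By the squeeze $0 \le |a_k|_p \le p^{-\gamma} p^{-k} \to 0$, we conclude $|a_k|_p \to 0$, i.e. $\lim_{k\to\infty} a_k = 0$, and Proposition 1 then yields that $\sum_{k=0}^\infty x_k\, p^{k+\gamma}$ converges in $\mathbb{Q}_p$.

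I do not expect any serious obstacle here: the argument is essentially a direct application of the preceding convergence criterion. The only substantive verification is the uniform bound $|x_k|_p \le 1$ on the digit coefficients, which is where the hypothesis $0 \le x_k \le p-1$ (and $x_k \in \mathbb{Z}$) is used; the rest is an elementary estimate driven by the ultrametric behaviour of powers of $p$.
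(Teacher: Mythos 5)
Your proposal is correct and takes essentially the same route as the paper: show that the terms $x_k p^{k+\gamma}$ tend to zero in the $p$-adic norm and then invoke Proposition 1. If anything, your explicit handling of the case $x_k = 0$ versus $1 \le x_k \le p-1$ is slightly more careful than the paper's, which asserts $\left| x_k p^k \right|_p = p^{-k}$ without noting that equality fails (harmlessly) when $x_k = 0$.
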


\begin{proof}
We claim that $lim_{k\shortrightarrow\infty}x_kp^k=0$ with respect to the $p$-adic norm, where $0\leq{x_k}\leq{p-1}$ for any $k\in\mathbb{N}\cup \{0\}$. 

Let $\epsilon>0$. There exists $N\in\mathbb{N}$ such that $\frac{1}{N}<\epsilon$. Fix $k\geq N$. We have $$\frac{1}{p^k}\leq\frac{1}{p^N}<\frac{1}{N}<\epsilon.$$

Thus, $\left| x_kp^k\right|_p=p^{\shortminus k}<\epsilon$. Consequently, $lim_{k\rightarrow\infty}\ x_kp^k=0$ with respect to the $p$-adic norm. 

Fix a series of the form $\sum_{k=0}^\infty x_kp^{k+\gamma}$ with $\gamma\in\mathbb{Z}$, $x_k$ integers such that $0\leq{x_k}\leq{p-1}$, $x_0>0$, and $k\in\mathbb{N}\cup\{0\}$.
By Proposition 1 and the fact that $lim_{k\shortrightarrow\infty}x_kp^{k+\gamma}=0$, we find that $\sum_{k=0}^\infty x_kp^{k+\gamma}$ converges in $\mathbb{Q}_p$.

\end{proof}

\begin{remark}
Equipped with Proposition 1, we proved that a family of power series always converges in $\mathbb{Q}_p$ for some prime number $p$ (Proposition 2 above). It means this expansion gives $p$-adic numbers, but the converse is not necessarily true. This is why we need Proposition 3 to define the $p$-adic expansion for any $x\in\mathbb{Q}_p$.
\end{remark}

\begin{prop}
Every $p$-adic number has a unique $p$-adic expansion.
\end{prop}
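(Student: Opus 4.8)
The plan is to prove existence and uniqueness separately, in both cases reducing to the unit case $|x|_p = 1$ by factoring out the correct power of $p$. Throughout I will use the ultrametric inequality and its sharp form from Theorem 2, the density of $\mathbb{Q}$ in $\mathbb{Q}_p$ from Theorem 4, and the convergence criteria of Propositions 1 and 2. I may restrict to $x \neq 0$, since $0$ is excluded by the requirement $x_0 > 0$ (or is treated as the trivial empty sum).

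A preliminary observation I would establish first is that the norm $|\cdot|_p$ on $\mathbb{Q}_p$ takes values only in $\{p^n : n \in \mathbb{Z}\} \cup \{0\}$. Indeed, since $\mathbb{Q}$ is dense in $\mathbb{Q}_p$ and the norm is continuous, $|x|_p$ is a limit of values of $|\cdot|_p$ on $\mathbb{Q}$, all of which lie in the closed set $\{p^n : n \in \mathbb{Z}\} \cup \{0\}$. Hence for $x \neq 0$ there is a unique $\gamma \in \mathbb{Z}$ with $|x|_p = p^{-\gamma}$, and this $\gamma$ is forced to be the exponent appearing in any expansion: in $\sum_{k \geq 0} x_k p^{k+\gamma}$ with $x_0 \in \{1,\ldots,p-1\}$ the leading term $x_0 p^\gamma$ has norm $p^{-\gamma}$, strictly larger than every later term, so Theorem 2 gives $|x|_p = p^{-\gamma}$.

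For existence I would set $u = x p^{-\gamma}$, so that $|u|_p = 1$, and construct digits recursively. Given a remainder $u_n$ with $|u_n|_p \leq 1$ (starting from $u_0 = u$), I would use density to pick a rational $r = m/s$ with $p \nmid s$ and $|u_n - r|_p \leq p^{-1}$; since $s$ is invertible modulo $p$, there is a unique $x_n \in \{0,\ldots,p-1\}$ with $m \equiv s x_n \pmod p$, equivalently $|r - x_n|_p \leq p^{-1}$, whence $|u_n - x_n|_p \leq p^{-1}$ by the ultrametric inequality. Setting $u_{n+1} = (u_n - x_n)/p$ keeps $|u_{n+1}|_p \leq 1$, so the recursion continues. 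The partial sums $S_N = \sum_{k=0}^{N} x_k p^k$ then satisfy $|u - S_N|_p = |p^{N+1} u_{N+1}|_p \leq p^{-(N+1)} \to 0$, and the series converges by Proposition 2; therefore $S_N \to u$, giving $u = \sum_{k\geq 0} x_k p^k$ with $x_0 \neq 0$ (forced by $|u|_p = 1$), and multiplying back by $p^\gamma$ yields the desired expansion of $x$.

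For uniqueness, the exponent $\gamma$ is already pinned down by the norm argument above, so after dividing by $p^\gamma$ it suffices to show a unit has at most one expansion $\sum_{k \geq 0} x_k p^k$. If two digit sequences differed, let $j$ be the least index with $x_j \neq x_j'$; then $(x_j - x_j')p^j = -\sum_{k > j}(x_k - x_k')p^k$. The difference $x_j - x_j'$ is a nonzero integer of absolute value at most $p-1$, hence not divisible by $p$, so the left side has norm exactly $p^{-j}$, while the right side has norm at most $p^{-(j+1)}$ by the ultrametric inequality, a contradiction. The main obstacle is the existence construction, specifically the digit-extraction step: one must verify, using only the density of $\mathbb{Q}$ and the multiplicative and ultrametric properties already proved, that a unique digit in $\{0,\ldots,p-1\}$ always exists and that the rescaled remainder stays within the unit ball so that the recursion does not stall.
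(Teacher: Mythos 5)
Your proposal is correct and follows essentially the same route as the paper: existence by rational approximation plus inversion of the denominator modulo $p$ (the paper phrases this via Bezout's identity) together with a recursion that rescales the remainder back into the unit ball, and uniqueness via the sharp ultrametric equality. The only cosmetic difference is that you isolate the first differing digit, whereas the paper runs an $\epsilon$--$N$ contradiction against the convergence of the difference series to $0$; both hinge on the same fact that a sum of terms with pairwise distinct $p$-adic norms has norm equal to the maximum.
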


\begin{proof}

Fix $x \in\mathbb{Q}_p\setminus \{0\}$. We multiply $x$ by $p^m$ for some $m\in\mathbb{Z}$ such that $$|x_1|_p=|xp^m|_p\leq1.$$

We claim that there exists integers $0\leq a_k\leq p-1$ for $k=0, \dotsb, N$ such that $$|x_1-\Sigma_{k=0}^N a_kp^k|< p^{-N}.$$ Since $p^{-N}$ converges to $0$, proving this claim is sufficient to prove the existence of the $p$-adic expansion by Squeeze Theorem.

We prove the base case that there exists an integer $0\leq a_0 \leq p-1$ such that $|x_1-a_0|_p<1$.

Suppose $|x_1|_p<1$. Then, $|x_1-a_0|_p=max(|x_1|_p, |a_0|_p)$ since $|x_1|_p\neq|a_0|_p$.
Obviously the only $a_0$ that satisfies the condition is $0$. If not, $|a_0|_p=1$, $|x_1-a_0|_p=1$, which is not less than $1$. 

Now suppose $|x_1|_p=1$. We pick a rational number $q$ that satisfies $|x_1-q|_p<1$, and write $q=\frac{d}{e}$, where $d,e$ are co-prime integers.

Since $|x_1-q|_p<1$, $|q|_p=1$. Thus, both $d,e$ are co-prime to $p$. By Bezout's identity, there exist integers $s,t$ such that $se+tp=1$.

Thus, $|x_1-sd|_p=|x_1-c+tpc|_p\leq max(|x_1-c|_p, |tpc|_p)<1$.
Let $l$ be the unique integer that $sd=lp+r$, where $r\in \{0,\dotsb, p-1\}$. Then $$|x_1-r|_p=|x_1-sd+lp|_r\leq max(|x_1-sd|_p, |lp|_p)<1.$$ Thus, $r$ is the $a_0$ that satisfies the base case.

We assume $|x_1-\Sigma_{k=0}^N a_kp^k|_p<p^{-N}$, and need to prove there exists an integer $0\leq a_{N+1}\leq p-1$ such that $$|x_1-\Sigma_{k=0}^{N+1} a_kp^k|_p<p^{-(N+1)}.$$

Given 
\begin{equation*}
|x_1-\Sigma_{k=0}^{N+1}a_kp^k|_p=|x_1-\Sigma_{k=0}^N a_kp^k-a_{N+1}p^{N+1}|_p,
\end{equation*}
we multiply the inequality by $p^{N+1}$, and have 
\begin{equation*}
\Big|(x_1-\sum_{k=0}^N a_kp^k)p^{-N-1}-a_{N+1}\Big|_p<1.
\end{equation*}
The existence of $a_{N+1}$ follows by the base case since 
\begin{equation*}
\Big|(x_1-\sum_{k=0}^N a_kp^k)p^{-N-1}\Big|_p\leq1.
\end{equation*}
Thus, by induction, we conclude that every $p$-adic number has a $p$-adic expansion.

\vspace{0.3cm}

We now claim that such a $p$-adic expansion is unique. Fix a $p$-adic number $q$, and suppose there exists two $p$-adic expansions of $q$, i.e. 
\begin{equation*}
q=\sum_{k=0}^{\infty}x_kp^{k+\gamma_1}=\sum_{k=0}^{\infty}y_kp^{k+\gamma_2}
\end{equation*}
where $x_k,y_k\in\mathbb{Z},$ and $0\leq x_k, y_k\leq n-1$ for all $k\in\mathbb{N}$, and $x_0,y_0>0$, $\gamma_1, \gamma_2\in\mathbb{Z}$.

Without loss of generality, let $\gamma_1\geq\gamma_2$. Then, 
\begin{equation*}
\sum_{k=0}^{\infty} x_kp^{k+\gamma_1}-y_kp^{k+\gamma_2}=-\sum_{k=0}^{\gamma_1-\gamma_2}y_kp^{k+\gamma_2}+\sum_{k=\gamma_1-\gamma_2+1}^{\infty} (x_k-y_k)p^{k+\gamma_2}.    
\end{equation*}

Let $\epsilon>0$. By the definition of convergence, there exists $N\in\mathbb{N}$ such that whenever $n\geq N$, we have 
\begin{equation*}
\Big|-\sum_{k=0}^{\gamma_1-\gamma_2}y_kp^{k+\gamma_2}+\sum_{k=\gamma_1-\gamma_2+1}^{n} (x_k-y_k)p^{k+\gamma_2}\Big|_p<\epsilon.
\end{equation*}

Fix $k'\in\mathbb{N}$. We assume by contradiction that some $y_k$ with $k$ between $0$ to $\gamma_1-\gamma_2$, or some $(x_k-y_k)$, with $k$ between $(\gamma_1-\gamma_2+1)$ and $k'$ is nonzero.

Then, 
\begin{align*}
&\Big|-\sum_{k=0}^{\gamma_1-\gamma_2}y_kp^{k+\gamma_2}+\sum_{k=\gamma_1-\gamma_2+1}^{k'} (x_k-y_k)p^{k+\gamma_2}\Big|_p \\&\quad =max(|y_0p^{\gamma_2}|_p,..., |(x_{k'}-y_{k'})|_p)=p^{-k''} 
\end{align*}
for some integer $k''$ between $0$ and $k'$. The desired equality follows since the $p$-adic norm of any nonzero term is different. 

Let $\epsilon=\frac{p^{-k''}}{2}<p^{-k''}$. 
Then, 
\begin{equation*}
\Big|-\sum_{k=0}^{\gamma_1-\gamma_2}y_kp^{k+\gamma_2}+\sum_{k=\gamma_1-\gamma_2+1}^{N} (x_k-y_k)p^{k+\gamma_2}\Big|_p\geq p^{-k''}>\epsilon 
\end{equation*}
for any $N\geq k'$. 

This contradicts the convergence of the whole series to $0$ in the $p$-adic sense. Thus, all coefficients of the series are $0$. It directly follows that $\gamma_1=\gamma_2$, and $x_k=y_k$ for any $k\in\mathbb{N}$. Hence the $p$-adic expansion of any $p$-adic number exists and is unique.
\end{proof}

\begin{mydef}
Fix $p$ as a prime number. The $p$-adic expansion of a number $x$ in $\mathbb{Q}_p\setminus\{0\}$ is a series of the form $$x=\sum_{k=0}^\infty x_kp^{k+\gamma}$$ with $\gamma\in\mathbb{Z}$, $x_k$ integers such that $0\leq{x_k}\leq{p-1}$, $x_0>0$, and $k\in\mathbb{N}\cup\{0\}$.
\end{mydef}

\begin{prop}
Let $x$ be a $p$-adic number with $p$-adic expansion \begin{equation*}
x=\sum_{k=0}^\infty x_kp^{k+\gamma}
\end{equation*}
 with $\gamma\in\mathbb{Z}$, $x_k$ are integers such that $0\leq{x_k}\leq{p-1}$, $x_0>0$, $k\in\mathbb{N}\cup\{0\}$. Then, $|x|_p=p^{-\gamma}$.
\end{prop}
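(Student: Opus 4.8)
The plan is to exploit the fact that in the $p$-adic expansion the leading term strictly dominates every later term in norm, and then to transfer a computation on the partial sums (which live in $\mathbb{Q}$, where Theorem 2 applies directly) to the limit $x$ by continuity of the norm. First I would set $s_n=\sum_{k=0}^{n} x_k p^{k+\gamma}\in\mathbb{Q}$ and recall that by Proposition 2 the series converges, so that $s_n\to x$ in $\mathbb{Q}_p$ by the definition of the $p$-adic expansion. Because $0<x_0\leq p-1$, the integer $x_0$ is coprime to $p$, so the leading term satisfies $|x_0 p^\gamma|_p=p^{-\gamma}$, while every later nonzero term has $|x_k p^{k+\gamma}|_p=p^{-(k+\gamma)}\leq p^{-(\gamma+1)}<p^{-\gamma}$.

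Next I would prove that $|s_n|_p=p^{-\gamma}$ for all $n$ by induction on $n$. The base case is $|s_0|_p=|x_0 p^\gamma|_p=p^{-\gamma}$. For the inductive step, write $s_{n+1}=s_n+x_{n+1}p^{n+1+\gamma}$; if $x_{n+1}=0$ the norm is unchanged, and otherwise $|x_{n+1}p^{n+1+\gamma}|_p<p^{-\gamma}=|s_n|_p$, so the strict-equality clause of Theorem 2 gives $|s_{n+1}|_p=\max(|s_n|_p,|x_{n+1}p^{n+1+\gamma}|_p)=p^{-\gamma}$. These computations take place in $\mathbb{Q}$, but by Theorem 4 the norm on $\mathbb{Q}_p$ restricts to the $p$-adic norm on $\mathbb{Q}$, so the values are the same whether read in $\mathbb{Q}$ or in $\mathbb{Q}_p$.

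Finally I would pass to the limit. In any normed field the reverse triangle inequality $\big|\,|a|_p-|b|_p\,\big|\leq|a-b|_p$ follows from property 4 of Definition 1, so the norm $|\cdot|_p$ is continuous. Since $s_n\to x$, this yields $|x|_p=\lim_{n\to\infty}|s_n|_p=p^{-\gamma}$, which is the claim.

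I expect the main (though modest) obstacle to be the inductive verification that the leading term dominates: one must make sure the strict inequality $p^{-(k+\gamma)}<p^{-\gamma}$ is in force so that the strict-equality clause of Theorem 2 (rather than merely the ultrametric inequality) applies at every step, and one must keep track of the bookkeeping that a norm computation carried out in $\mathbb{Q}$ transfers verbatim to $\mathbb{Q}_p$. Once these points are settled, the continuity argument is immediate.
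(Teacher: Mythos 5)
Your proposal is correct and follows essentially the same route as the paper: the paper likewise evaluates $\bigl|\sum_{k=0}^{N}x_kp^{k+\gamma}\bigr|_p=\max(|x_0p^{\gamma}|_p,\dots,|x_Np^{N+\gamma}|_p)=p^{-\gamma}$ for each partial sum, using that the leading term dominates, and then concludes $|x|_p=p^{-\gamma}$. Your version is simply more explicit about two points the paper leaves implicit, namely the induction with the strict-equality clause of Theorem 2 and the final passage to the limit via continuity of the norm.
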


\begin{proof}
Fix $N\in\mathbb{N}$.
\begin{equation*}
|\sum_{k=0}^{N}x_kp^{k+\gamma}|_p=max(|x_0p^\gamma|,...,|x_Np^{N+\gamma}|)=p^{-\gamma}.
\end{equation*}
Thus, $|x|_p=p^{-\gamma}$.
\end{proof}

\begin{ex}
We try to find the first three terms of the $5$-adic expansion of $\frac{4}{3}$ as an example of how $p$-adic expansion works. Note that $|\frac{4}{3}|_5=1$, thus $\gamma=0$.

We use the proof of the existence of the $p$-adic expansion in Proposition 3 to find the terms of the expansion.  
We pick a rational number $q=\frac{-1}{3}$ so that $|\frac{4}{3}-\frac{-1}{3}|_5<1$.

Then, since the denominator $3$ is co-prime to $5$, there exists integers $s, t$ such that $3s+5t=1$. In fact, $s=2, t=-1$ is one solution. 

Then, $s(-1)=2(-1)=-2$. The remainder of $-2$ divided by $5$ is $3$, which is the coefficient of the first term of the expansion.

We calculate $|\frac{4}{3}-3|_5=|\frac{-5}{3}|_5=5^{-1}$ to find the second term. We multiply $\frac{-5}{3}$ by $5^{-1}$ and get $\frac{-1}{3}$ to reduce the $5$-adic norm to 1.

Again, we pick a rational number $\frac{-2}{1}$ so that $|-\frac{1}{3}-(-\frac{2}{1})|_5=5^{-1}<1$.Then, there exists $s',t'$ such that $s'+5t'=1$. In fact, $s=6, t=-1$ is a solution. Thus, $-2s=-12$. We find that 3, as a remainder of -12 divided by 5, is the coefficient of the second term of the expansion.

By using the same method to ($\frac{4}{3}-3-5\cdot3$), we have 1 as the coefficient of the third term of the expansion.

Thus, $$\frac{3}{4}=5^0(3+3\cdot 5+1 \cdot 5^2+\dotsb)$$ in the $5$-adic sense.
\end{ex}

\subsection{Topology on $\mathbb{Q}_p$}

This subsection will cover basic concepts of topology, the ring of $p$-adic integers $\mathbb{Z}_p$, and some important topological properties of $\mathbb{Q}_p$, such as the fact that $\mathbb{Q}_p$ is locally compact.

\begin{mydef}

Fix a normed field $(\mathbb{F}, |\cdot|)$. 

\begin{itemize}

\item[(i)] An open ball of radius $r>0$ centered at a point $x\in\mathbb{F}$ is the set $B_r(x)=\left\{ y\in\mathbb{F}: \left| y-x\right|<r \right\}$.
\item[(ii)] A closed ball of radius $r>0$ centered at a point $x\in\mathbb{F}$ is the set $B_r(x)=\left\{ y\in\mathbb{F}: \left| y-x\right|\leq r \right\}$.

\item[(iii)] Let $x\in\mathbb{F}$ and $A\subseteq\mathbb{F}$. Then $x$ is a limit point of $A$ if, for every $\epsilon>0$, $B_\epsilon(x)\cap A$ contains a point other than $x$.

\item[(iv)] A set $C\subseteq\mathbb{F}$ is called closed if it contains all of its limit points.

\item[(v)] A set $O\subseteq \mathbb{F}$ is called open if for every $x\in O$, there exists $\epsilon>0$ such that $B_\epsilon(x)\subseteq O$.

\item[(vi)] A set $A\subseteq\mathbb{F}$ is called bounded if there exists $M>0$ such that $|x|<M$ for any $x\in A$.

\end{itemize}
\end{mydef}

\begin{prop}
Fix a normed field $(\mathbb{F}, |.|)$. Every closed ball in $\mathbb{F}$ is closed.
\end{prop}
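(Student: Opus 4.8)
The plan is to show that the closed ball $C = \{y \in \mathbb{F} : |y - x| \le r\}$ of Definition 9(ii) contains all of its limit points, which is exactly the condition for closedness in Definition 9(iv). I would argue by contraposition: fix a point $z \in \mathbb{F}$ lying outside $C$, so that $|z - x| > r$, and exhibit a single open ball centered at $z$ that misses $C$ entirely, thereby showing $z$ cannot be a limit point of $C$ in the sense of Definition 9(iii).

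The key step is a direct triangle-inequality estimate. Set $\delta = |z - x| - r$, which is strictly positive by the assumption $|z-x| > r$. I claim the open ball $B_\delta(z)$ is disjoint from $C$. Indeed, for any $y \in B_\delta(z)$ we have $|y - z| < \delta$, and writing $z - x = (z - y) + (y - x)$ together with property 4 of Definition 1 yields $|z - x| \le |z - y| + |y - x|$, hence $|y - x| \ge |z - x| - |z - y| > |z - x| - \delta = r$. Therefore $|y - x| > r$, so $y \notin C$, and we conclude $B_\delta(z) \cap C = \emptyset$.

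Since $B_\delta(z)$ contains no point of $C$ whatsoever — in particular none other than $z$ — the point $z$ fails the defining condition of a limit point. Thus no point outside $C$ can be a limit point of $C$, so every limit point of $C$ lies in $C$, and $C$ is closed.

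I expect no genuine obstacle here; the only points demanding care are applying the triangle inequality in the right direction to obtain the lower bound on $|y-x|$, and keeping in mind that the limit-point condition in Definition 9(iii) asks for a nearby point of $C$ \emph{other than} $z$ — both are settled at once because $B_\delta(z)$ meets $C$ in the empty set. It is worth emphasizing that the argument uses only the ordinary triangle inequality, not the sharper ultrametric inequality of Theorem 2, so it is valid in an arbitrary normed field, exactly as the proposition asserts.
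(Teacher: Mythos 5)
Your argument is correct, but it is organized differently from the paper's. The paper argues directly: it takes a limit point $z$ of the closed ball, uses the fact that $B_\epsilon(z)$ meets the ball for \emph{every} $\epsilon>0$ to extract a point $y$ with $|z-x|\leq|z-y|+|y-x|\leq\epsilon+r$, and then lets $\epsilon\to 0$ to conclude $|z-x|\leq r$. You instead argue by contraposition: you fix a point $z$ \emph{outside} the ball, choose the single radius $\delta=|z-x|-r$, and use the reverse form of the triangle inequality to show $B_\delta(z)$ misses the ball entirely, so $z$ cannot be a limit point. The two proofs rest on the same triangle-inequality estimate applied to the same three points $x,y,z$, but yours replaces the paper's limiting step (``for all $\epsilon>0$, $|z-x|\leq\epsilon+r$, hence $|z-x|\leq r$'') with a single explicit choice of radius, which is arguably cleaner and also makes visible that the complement of the closed ball is open. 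One pedantic point common to both arguments: the step $|z-y|=|y-z|$ uses that $|-1|=1$ in any normed field, which follows from property 3 of Definition 1 since $|-1|^2=|1|=1$; this is harmless but worth knowing you are using it. Your closing observation that only the ordinary triangle inequality is needed, not the ultrametric inequality, is accurate and matches the generality of the proposition as stated.
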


\begin{proof}
Fix $B_r(x)=\{ y\in\mathbb{F}: \left| y-x\right|\leq r \}$. Let $z$ be a limit point of $B_r(x)$. If $\epsilon>0$ fixed, we have $B_\epsilon(z)\cap B_r(x)\neq\varnothing$.

Let $n\in B_\epsilon (Z)\cap B_r(x)$. Then, 
\begin{equation*}
|z-x|=|z-y+y=x|\leq|z-y|+|y-x|\leq \epsilon+r.
\end{equation*}
As $\epsilon\rightarrow 0$, we have $|\epsilon+r|\rightarrow r$. Thus, $|z-x|\leq r$. It follows that $z\in B_r(x)$ by definition. Therefore, $B_r(x)$ is closed.
\end{proof}

\begin{prop}
Fix a normed field $(\mathbb{F}, |\cdot|)$. Every open ball in $\mathbb{F}$ is open.
\end{prop}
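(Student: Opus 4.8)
The plan is to prove that an arbitrary open ball $B_r(x)$ satisfies the definition of openness from Definition 10(v): for every point $y \in B_r(x)$, I must exhibit some $\epsilon > 0$ with $B_\epsilon(y) \subseteq B_r(x)$. The natural strategy mirrors the standard real-analysis argument and the proof of Proposition 4 just given for closed balls, relying only on the triangle inequality (property 4 of Definition 1).

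First I would fix an arbitrary $y \in B_r(x)$, which by definition means $|y - x| < r$. The key observation is that this leaves a positive gap between $|y-x|$ and $r$, so I would set
\begin{equation*}
\epsilon = r - |y - x| > 0.
\end{equation*}
Next I would take any $z \in B_\epsilon(y)$, so that $|z - y| < \epsilon$, and apply the triangle inequality to estimate the distance from $z$ to the center $x$:
\begin{equation*}
|z - x| = |z - y + y - x| \leq |z - y| + |y - x| < \epsilon + |y - x| = r.
\end{equation*}
This shows $z \in B_r(x)$, and since $z$ was arbitrary, $B_\epsilon(y) \subseteq B_r(x)$. As $y$ was an arbitrary point of the ball, this verifies openness and completes the argument.

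Honestly, there is no serious obstacle here; the only point requiring a moment's care is the correct choice of radius, namely $\epsilon = r - |y-x|$ rather than $\epsilon = r$, so that the triangle-inequality estimate closes up to exactly $r$. I would also be careful to invoke only property 4 of Definition 1 (the ordinary triangle inequality), since the statement is for a general normed field; the sharper ultrametric inequality from Theorem 2 is available in the $p$-adic case but is not needed, and relying on it would weaken the generality of the claim.
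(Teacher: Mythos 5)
Your proposal is correct and follows essentially the same approach as the paper: fix $y$ in the ball, choose a radius from the gap $r-|y-x|$, and close the estimate with the ordinary triangle inequality. In fact your version is the cleaner of the two, since taking $\epsilon=r-|y-x|$ (rather than the paper's $\frac{r-r'}{2}$) makes the final bound come out to exactly $r$, whereas the paper's displayed chain $|y-y'|+|y'-x|<\frac{r-r'}{2}+r<r-r'+r=r$ contains slips (it bounds the wrong quantity and the last equality is false); your write-up is the one that actually verifies the definition.
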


\begin{proof}
Fix 
\begin{equation*}
B_r(x)=\{ y\in\mathbb{F}: \left| y-x\right|< r \}
\end{equation*}
an open ball in $\mathbb{F}$.

Pick $y\in B_r(x)$. Then, $|y-x|=r'<r$ for some $r'>0$. Let $\epsilon=\frac{r-r'}{2}$. Pick y'$\in B_\epsilon(y)$. Then 
\begin{equation*}
|y-x|=|y-y'+y'-x|\leq|y-y'|+|y'-x|
<\frac{r-r'}{2}+r<r-r'+r=r.
\end{equation*}
Thus, y'$\in B_r(x)$.

We conclude that $B_\epsilon(y)\subseteq B_r(x)$. By definition, it follows that $B_r(x)$ is an open set.
\end{proof}

\begin{thm}
Any open ball in $\mathbb{Q}_p$ is a closed ball and vice versa.
\end{thm}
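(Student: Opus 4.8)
The plan is to exploit the fact that the $p$-adic norm on $\mathbb{Q}_p$ takes values only in the discrete set $\{0\}\cup\{p^{n}:n\in\mathbb{Z}\}$, which is exactly the content of Proposition 4 (for $x\neq 0$ we have $|x|_p=p^{-\gamma}$ for some $\gamma\in\mathbb{Z}$). Because the attained norm values are discrete, with nothing strictly between consecutive powers $p^{m}$ and $p^{m+1}$, a strict inequality $|y-x|_p<r$ can always be rewritten as a non-strict inequality $|y-x|_p\le r'$ for a suitable $r'$, and conversely. This translation is precisely what converts an open ball into a closed ball and back, so the whole argument reduces to choosing the correct power of $p$ in each direction. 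I would first record as a preliminary observation that the value set $\{0\}\cup\{p^{n}:n\in\mathbb{Z}\}$ is discrete and contains arbitrarily small and arbitrarily large powers of $p$, so that the maxima and minima used below exist.

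For the forward direction, fix an open ball $B_r(x)=\{y\in\mathbb{Q}_p:|y-x|_p<r\}$ with $r>0$. The attained norm values that are strictly less than $r$ form a nonempty set (it contains sufficiently small negative powers of $p$) that is bounded above inside a discrete set, so I would let $p^{M}$ be the largest power of $p$ satisfying $p^{M}<r$. Then for every $y$ the equivalence $|y-x|_p<r \iff |y-x|_p\le p^{M}$ holds, because no attained value lies in the interval $(p^{M},r)$. Hence $B_r(x)$ coincides with the closed ball of radius $p^{M}$ centered at $x$.

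For the converse, fix a closed ball $\{y\in\mathbb{Q}_p:|y-x|_p\le r\}$ and let $p^{M'}$ be the smallest power of $p$ strictly greater than $r$, which exists by the same discreteness. Since the smallest attained value exceeding $r$ is exactly $p^{M'}$, the values satisfying $|y-x|_p\le r$ are precisely those satisfying $|y-x|_p<p^{M'}$. Therefore the closed ball equals the open ball of radius $p^{M'}$ centered at $x$, which completes both directions.

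I expect no genuine obstacle here, only one point requiring care: the radius $r$ is an arbitrary positive real number while the norm attains only discrete values, so the argument must pick the correct power of $p$ and keep strict versus non-strict bounds straight, justifying the existence of the relevant largest/smallest power through discreteness rather than through any estimate. The reasoning is essentially bookkeeping about which power of $p$ to use, with no hard computation involved.
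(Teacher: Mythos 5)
Your proposal is correct and follows essentially the same route as the paper: both arguments use the discreteness of the value set $\{0\}\cup\{p^{n}:n\in\mathbb{Z}\}$ to replace the strict inequality defining an open ball by a non-strict one at the next attained power of $p$ (and conversely), differing only in that you write out the closed-to-open direction explicitly where the paper appeals to symmetry. No gaps.
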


\begin{proof}
Fix $B_r(x)$ as an open ball in $\mathbb{Q}_p$, where $r>0$. 
\begin{equation*}
B_r(x)= \left\{ y\in\mathbb{Q}_p:\left| y-x\right|<r\right\}. 
\end{equation*}
There exists a smallest integer $k$ such that $r\leq p^{-k}$. Then, $$B_r(x)=\{ y\in\mathbb{Q}_p: \left| y-x\right|<p^{-k} \}=\{ y\in\mathbb{Q}_p: \left| y-x\right|\leq p^{-k-1} \}.$$ Thus, $B$ is a closed ball by definition.
A similar argument proves a closed ball in $\mathbb{Q}_p$ is an open ball.
\end{proof}

\begin{mydef}
A $p$-adic number which satisfies $\left| x\right|_p\leq1$ is called a $p$-adic integer. It can be easily verified that the set of $p$-adic integers form a ring. We denote the ring of $p$-adic integers by $\mathbb{Z}_p$. 
\end{mydef}

\begin{prop}
All integers are $p$-adic integers. 
\end{prop}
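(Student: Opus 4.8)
The plan is to reduce the proposition directly to Definition 12, which declares that a $p$-adic number $x$ lies in $\mathbb{Z}_p$ precisely when $|x|_p \leq 1$. So for an arbitrary integer $n$ I would simply establish that $|n|_p \leq 1$. I would first dispose of the trivial case $n = 0$: by Definition 2 we have $|0|_p = 0 \leq 1$, so $0 \in \mathbb{Z}_p$.

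For a nonzero integer $n$, I would invoke Theorem 1 to write $n = p^{\gamma}\frac{m}{k}$ for a unique $\gamma \in \mathbb{Z}$ and integers $m,k$ not divisible by $p$, so that $|n|_p = p^{-\gamma}$ by Definition 2. The heart of the argument is to verify that $\gamma \geq 0$ whenever $n$ is an integer. I would argue by contradiction: if $\gamma < 0$, then rearranging $n = p^{\gamma}\frac{m}{k}$ gives $n\,p^{-\gamma}k = m$; since $-\gamma \geq 1$, the left-hand side is divisible by $p$, while the right-hand side $m$ is by hypothesis not divisible by $p$, a contradiction. Hence $\gamma \geq 0$, and therefore $|n|_p = p^{-\gamma} \leq p^{0} = 1$, which shows $n \in \mathbb{Z}_p$ and completes the proof.

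The argument is almost entirely a matter of unwinding definitions, so there is no substantial obstacle; the only genuine content is the observation that the exponent $\gamma$ assigned to an integer by Theorem 1 is nonnegative, which is the single step I expect to require care. Equivalently, one could appeal directly to the Fundamental Theorem of Arithmetic as used in the proof of Theorem 1: in the prime factorization of an integer the prime $p$ occurs to a nonnegative power, and that power is exactly $\gamma$, so $|n|_p = p^{-\gamma} \leq 1$ follows immediately.
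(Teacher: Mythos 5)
Your proof is correct and follows essentially the same route as the paper: both arguments reduce to the observation that the exponent $\gamma$ from Theorem 1 is nonnegative for an integer, so that $|n|_p = p^{-\gamma} \leq 1$. If anything, your version is slightly more careful — you treat $n=0$ explicitly and actually justify $\gamma \geq 0$ by contradiction, whereas the paper simply asserts the two cases (and contains a sign typo, writing $p^{-\gamma}>1$ where $<1$ is meant).
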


\begin{proof}
Fix $p$ as a prime number. Let $x\in\mathbb{Z}$. If $x$ is co-prime to $p$ we have $| x|_p = p^0 =1$. If $x$ is not co-prime to $p$, we have $\gamma(x)>0$, and $| x|_p=p^{-\gamma}>1$.
\end{proof}

\begin{mydef}
We define $p\mathbb{Z}_p$ to be the set of $p$-adic numbers that satisfy $$\left| x\right|_p\leq p^{\shortminus1},$$
and $p^2\mathbb{Z}_p$ to be the set of $p$-adic numbers that satisfy $$\left| x\right|_p\leq p^{\shortminus2}.$$ More generally, we define $p^n\mathbb{Z}_p$ to be the set of elements $x\in\mathbb{Q}_p$ such that $$\left| x\right|_p\leq p^{\shortminus{n}}$$ for any $n\in\mathbb{N}$.
\end{mydef}

\begin{remark}
It is easy to check that $$\mathbb{Z}_p\supset p\mathbb{Z}_p
\supset p^2\mathbb{Z}_p\supset...\supset p^n\mathbb{Z}_p\supset \dotsb .$$
Note that $\left| x\right|_p=1$ if $x\in\mathbb{Z}_p\setminus{p\mathbb{Z}_p}$, and
$\left| x\right|_p=p^{\shortminus1}$ if $x\in p\mathbb{Z}_p\setminus{p^2\mathbb{Z}_p}$. More generally, $\left| x\right|_p=p^{\shortminus n}$ if $x\in p^n\mathbb{Z}_p\setminus{p^{n+1}\mathbb{Z}_p}$ for some $n \in\mathbb{N}$.
\end{remark}

\begin{mydef}
Fix a normed field $(\mathbb{F}, |\cdot|)$. A topological space $X$ in $\mathbb{F}$ is called compact if every open cover of it has a finite subcover. That is, if $X\subseteq\cup_{m\in K} m$, with $K$ a collection of open sets, then there exists a finite collection $N\subseteq K$ such that $X\subseteq\cup_{n\in N} n$.
\end{mydef}

\begin{mydef}
Fix a normed field $(\mathbb{F}, |\cdot|)$. A topological space $X$ in $\mathbb{F}$ is called locally compact if each $x\in X$ has a compact neighborhood. That is, there exists an open set $Y$ and a compact set $Z$ such that $x\in Y\subseteq Z$.
\end{mydef}

\begin{thm} ($p$-adic Heine-Borel Theorem) A subset $X\subseteq\mathbb{Q}_p$ is compact if it is closed and bounded. 
\end{thm}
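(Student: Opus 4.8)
The plan is to reduce the whole statement to the single fact that the ring of $p$-adic integers $\mathbb{Z}_p$ is compact, and then obtain the general case by two standard topological maneuvers. First I would use boundedness: by the definition of a bounded set, $X \subseteq \{y \in \mathbb{Q}_p : |y|_p \le p^N\}$ for some integer $N$. Multiplication by $p^N$ is a bijection of $\mathbb{Q}_p$ that scales every norm by $|p^N|_p = p^{-N}$, so it carries this closed ball onto $\mathbb{Z}_p$, sends open balls to open balls, and is therefore a homeomorphism. Since a homeomorphism preserves both closedness and compactness, it suffices to treat the case $X \subseteq \mathbb{Z}_p$ with $X$ closed. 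Granting that $\mathbb{Z}_p$ is compact, I would then invoke the principle that a closed subset of a compact set is compact: given an open cover $\mathcal{U}$ of $X$, the complement $\mathbb{Q}_p \setminus X$ is open because $X$ is closed, so $\mathcal{U} \cup \{\mathbb{Q}_p \setminus X\}$ is an open cover of the compact set $\mathbb{Z}_p$; extracting a finite subcover and discarding $\mathbb{Q}_p \setminus X$ leaves a finite subcover of $X$.

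The heart of the argument is proving $\mathbb{Z}_p$ compact, which I would do by a $p$-adic bisection argument mirroring the interval-halving proof over $\mathbb{R}$. Using the first digit of the $p$-adic expansion (Proposition 3), decompose $\mathbb{Z}_p = \bigsqcup_{a=0}^{p-1}(a + p\mathbb{Z}_p)$ into $p$ closed balls of radius $p^{-1}$. Suppose toward a contradiction that some open cover $\mathcal{U}$ of $\mathbb{Z}_p$ admits no finite subcover. Then at least one coset $a_0 + p\mathbb{Z}_p$ admits no finite subcover, and subdividing it again and iterating produces a nested chain of balls $C_n = s_n + p^n\mathbb{Z}_p$ of radius $p^{-n}$, none admitting a finite subcover, where $s_n = \sum_{k<n} a_k p^k$. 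The partial sums $s_n$ satisfy $|s_{n+1}-s_n|_p \le p^{-n} \to 0$, so by the Cauchy criterion (Theorem 6) they form a Cauchy sequence, which by completeness of $\mathbb{Q}_p$ converges to a point $x \in \mathbb{Z}_p$; moreover $|x - s_n|_p \le p^{-n}$ places $x$ in every $C_n$. Now $x$ lies in some $U \in \mathcal{U}$, and openness gives $B_\epsilon(x) \subseteq U$ for some $\epsilon > 0$. Choosing $n$ with $p^{-n} < \epsilon$, the ultrametric inequality yields $|y - x|_p \le \max(|y - s_n|_p, |s_n - x|_p) \le p^{-n} < \epsilon$ for every $y \in C_n$, so $C_n \subseteq B_\epsilon(x) \subseteq U$. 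Thus $C_n$ is covered by the single set $U$, contradicting that it has no finite subcover.

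The main obstacle is exactly this compactness of $\mathbb{Z}_p$, and within it the two places where the $p$-adic structure must do real work: completeness has to deliver a genuine limit point $x$ lying in every ball of the nested chain, and the ultrametric (strong triangle) inequality has to be invoked to show that an entire small ball $C_n$ sits inside a single member of the cover. Once these are secured, the reductions in the first paragraph are routine, and the theorem follows. I would also note in passing that the same bisection machinery shows $\mathbb{Z}_p$ is a compact neighborhood of each of its points, so this proof simultaneously yields the local compactness of $\mathbb{Q}_p$ promised in the subsection's introduction.
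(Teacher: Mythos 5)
Your proof is correct, and it does considerably more than the paper, whose ``proof'' of this theorem is only a citation to Corollary 4 of the Thorne notes; you supply a complete, self-contained argument. The two reductions (rescaling by $p^N$ to land inside $\mathbb{Z}_p$, then ``closed subset of a compact set is compact'') are sound, and the core subdivision argument for the compactness of $\mathbb{Z}_p$ is the standard one: the decomposition $\mathbb{Z}_p=\bigcup_{a=0}^{p-1}(a+p\mathbb{Z}_p)$ into disjoint cosets is justified by the $p$-adic expansion of Proposition 3 (and is the same decomposition the paper itself uses implicitly in the proof of Theorem 12), the centers $s_n$ form a Cauchy sequence by Theorem 6, completeness puts the limit $x$ in every $C_n$, and the ultrametric inequality forces $C_n\subseteq B_\epsilon(x)$ once $p^{-n}<\epsilon$. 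Two small remarks: first, your argument proves only the ``closed and bounded $\Rightarrow$ compact'' direction, but that is exactly what the theorem as stated claims, so nothing is missing; second, your closing observation is worth taking seriously, since the paper's Corollary 1 (local compactness of $\mathbb{Q}_p$) and Theorem 9 (compactness of $\mathbb{Z}_p$) are both deduced \emph{from} Theorem 8, so your direct proof of the compactness of $\mathbb{Z}_p$ would let the paper establish all three results without leaning on an external reference, and with no circularity since it uses only completeness, the $p$-adic expansion, and the ultrametric inequality, all of which are available by Section 2.3.
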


\begin{proof}
See Corollary 4 on page 8 of \cite{Jack}.
\end{proof}

\begin{cor}
The field $\mathbb{Q}_p$ is locally compact.
\end{cor}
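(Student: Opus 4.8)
The plan is to verify the definition of local compactness (Definition 15) directly: I will fix an arbitrary point $x \in \mathbb{Q}_p$ and exhibit an open set $Y$ together with a compact set $Z$ satisfying $x \in Y \subseteq Z$. The natural candidate for both is a ball centered at $x$, and the distinctive topology of $\mathbb{Q}_p$ — where every open ball is simultaneously a closed ball (Theorem 8) — will let me take $Y$ and $Z$ to be the very same set, which streamlines the argument considerably.

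Concretely, I would consider the open ball $B_1(x) = \{ y \in \mathbb{Q}_p : |y - x|_p < 1 \}$. First, by Proposition 5 this set is open, so it is a legitimate choice for $Y$, and it plainly contains $x$. Second, I would argue that $B_1(x)$ is compact by appealing to the $p$-adic Heine--Borel Theorem (Theorem 9), which reduces the task to checking that $B_1(x)$ is closed and bounded. Closedness is immediate from Theorem 8, which identifies $B_1(x)$ with a closed ball, together with Proposition 4, which states that closed balls are closed. Boundedness follows from the strong triangle inequality of Theorem 2: for any $y \in B_1(x)$ one has $|y|_p \leq \max(|y - x|_p, |x|_p) \leq \max(1, |x|_p)$, so every element of the ball has norm strictly below $M := \max(1, |x|_p) + 1$, which is precisely the bound required by Definition 12(vi).

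With these two facts in hand, I would simply set $Y = Z = B_1(x)$: this single set is open (in its role as $Y$) and also compact (in its role as $Z$), and trivially $x \in Y \subseteq Z$. By Definition 15, $x$ therefore has a compact neighborhood, and since $x \in \mathbb{Q}_p$ was arbitrary, $\mathbb{Q}_p$ is locally compact.

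I do not expect a serious obstacle here once Theorems 8 and 9 are available; the corollary is essentially a bookkeeping argument. The one point demanding genuine care is the appeal to Heine--Borel: I must confirm that the chosen ball really is both closed and bounded in the precise senses of Definition 12, and the cleanest way to secure closedness is to route through the open-equals-closed ball phenomenon (Theorem 8) rather than arguing about limit points by hand. That same phenomenon is exactly what makes the proof so short — in the $p$-adic setting an open ball is itself a compact open neighborhood, so there is no need to separately nest a smaller open set inside a larger compact one, as one would in the real case.
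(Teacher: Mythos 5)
Your proof is correct and follows essentially the same route as the paper: exhibit a ball around $x$, use the fact that balls in $\mathbb{Q}_p$ are simultaneously open and closed, check boundedness via the strong triangle inequality, and invoke the $p$-adic Heine--Borel theorem to get compactness. The only cosmetic difference is that you use the open unit ball $\{y : |y-x|_p < 1\} = x + p\mathbb{Z}_p$ where the paper uses the closed unit ball $x + \mathbb{Z}_p$; both work identically.
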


\begin{proof}
Let $x\in\mathbb{Q}_p$. Consider $Z=\{ x+y: y\in\mathbb{Z}_p\}$.
We claim $Z$ is a closed ball centered at $x$. We have
\begin{equation*}
\begin{split}
&Z=\{ x+y\in\mathbb{Q}_p: |y|_p\leq1\}=\{ x+y\in\mathbb{Q}_p: |x+y-x|_p\leq1\}
\\
&=\{ z\in\mathbb{Q}_p: |z-x|_p\leq1\}.
\end{split}
\end{equation*}

Thus, $Z$ is a closed ball by definition. It is an open ball by Theorem 7 and an open set by Proposition 6. By Proposition 7, $Z$ is closed. 

Fix $k\in Z$. Then, $$|k|_p\leq max(|x|_p, |y|_p)$$ for some $y\in\mathbb{Z}_p$. 

Since $|y|_p\leq1$, and $|x|_p$ is fixed, pick $M=2 \cdot max (|x|_p,1)$. We have $|k|_p<M$ for any $k\in Z$. Thus, $Z$ is bounded. 

Since $Z$ is closed and bounded, it is compact by Theorem 8. Thus, $Z$ is a compact neighborhood of $x$, which means $\mathbb{Q}_p$ is locally compact.

\end{proof}

\begin{thm}
The ring $\mathbb{Z}_p$ is compact.
\end{thm}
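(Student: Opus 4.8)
The plan is to identify $\mathbb{Z}_p$ with the closed unit ball centered at the origin and then apply the $p$-adic Heine-Borel Theorem. First I would note that by the definition of a $p$-adic integer, $\mathbb{Z}_p = \{x \in \mathbb{Q}_p : |x|_p \leq 1\}$, which is exactly the closed ball of radius $1$ about $0$. The proposition asserting that every closed ball is closed then shows $\mathbb{Z}_p$ is a closed subset of $\mathbb{Q}_p$. For boundedness, every $x \in \mathbb{Z}_p$ satisfies $|x|_p \leq 1 < 2$, so the constant $M = 2$ witnesses boundedness in the sense of the relevant definition. Being closed and bounded, $\mathbb{Z}_p$ is compact by the $p$-adic Heine-Borel Theorem. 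This is precisely the special case $x = 0$ of the computation already performed in the corollary establishing that $\mathbb{Q}_p$ is locally compact, so the argument is short.

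A more self-contained route, which I would mention as an alternative since the Heine-Borel result is only cited externally, proceeds by sequential compactness using $p$-adic expansions. Every element of $\mathbb{Z}_p$ has an expansion $\sum_{k=0}^\infty x_k p^k$ with digits $x_k \in \{0, 1, \ldots, p-1\}$. Given an arbitrary sequence in $\mathbb{Z}_p$, I would apply the pigeonhole principle to the zeroth digit, of which there are only $p$ possible values, to extract a subsequence whose terms all share the same $x_0$; repeating this on the first digit, the second digit, and so on, and then passing to the diagonal subsequence, yields a subsequence whose digits agree in every fixed position. By the proposition characterizing convergence of series in $\mathbb{Q}_p$, this diagonal subsequence converges to a limit lying in $\mathbb{Z}_p$, which establishes sequential compactness.

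The main obstacle is that sequential compactness is not literally the open-cover notion of compactness used in the relevant definition, and the excerpt has not proved their equivalence for metric spaces. Bridging that gap would require an auxiliary lemma identifying the two notions in $\mathbb{Q}_p$, which the paper does not currently supply. For this reason I would present the Heine-Borel argument as the primary proof, since it is fully rigorous within the framework already developed, and relegate the diagonal construction to a remark illustrating the profinite structure of $\mathbb{Z}_p$.
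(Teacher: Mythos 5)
Your primary argument is correct and is essentially identical to the paper's own proof: both identify $\mathbb{Z}_p$ as the closed unit ball, invoke the proposition that closed balls are closed together with boundedness, and conclude via the $p$-adic Heine--Borel theorem (Theorem 8). The diagonal/sequential-compactness alternative is a nice aside, and you are right to demote it, since the paper never establishes the equivalence of sequential and open-cover compactness.
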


\begin{proof}
We have that $\mathbb{Z}_p=\{ x\in\mathbb{Q}_p: |x|_p\leq1 \}$ is a closed ball by definition, and therefore closed by Proposition 3. Note also that $\mathbb{Z}_p$ is bounded by definition. Thus, $\mathbb{Z}_p$ is compact by Theorem 8.
\end{proof}

\section{Integration on $\mathbb{Q}_p$}

In this section, we introduce the notion of a Haar measure for locally compact topological abelian groups (such as $\mathbb{Q}_p$). This will allow us to set up the concept of an integral of a complex-valued function of a $p$-adic variable. We then present basic rules for $p$-adic calculus using the properties of the Haar measure, $p$-adic numbers and knowledge of abstract algebra.

\begin{mydef}
(Left, right Haar Measure)

Let $(G,\cdot)$ be a topological group. A left Haar measure on $G$ is a nonzero regular Borel measure $\mu$ on $G$ such that $\mu(x\cdot E)=\mu(E)$ for all $x\in G$ and all measurable subsets $E$ of $G$. A right Haar measure on $G$ is a nonzero regular Borel measure $\mu$ on $G$ such that $\mu(E\cdot x)=\mu(E)$ for all $x\in G$ and all measurable subsets $A$ of $G$.
\end{mydef}

\begin{thm}
Let $(G,\cdot)$ be a locally compact topological group. Then, there exists a unique left (right) Haar measure up to scaling by a positive constant.
\end{thm}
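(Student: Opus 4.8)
The plan is to split the statement into existence and uniqueness, handling existence by Weil's covering-number construction and uniqueness by a Fubini-type argument. Throughout I write $e$ for the identity of $G$ and work with the left case, the right case being symmetric.

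For existence I would fix once and for all a compact set $K_0 \subseteq G$ with nonempty interior, which exists by local compactness. Given a compact $K$ and an open $V$ containing $e$, define the covering number $(K:V)$ to be the least number of left translates $g_1 V, \dots, g_n V$ needed to cover $K$; this is finite since $K$ is compact and $V$ has nonempty interior. The central object is the normalized ratio $h_V(K) = (K:V)/(K_0:V)$, which I would verify to be (i) nonnegative and left-invariant, $h_V(gK) = h_V(K)$; (ii) monotone and finitely subadditive, with genuine additivity $h_V(K_1 \cup K_2) = h_V(K_1) + h_V(K_2)$ whenever $K_1$ and $K_2$ are separated far enough relative to $V$; and (iii) sandwiched by $(K_0:K)^{-1} \le h_V(K) \le (K:K_0)$. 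The sandwich bounds place the whole family $(h_V)_V$ inside the compact product space obtained by assigning to each compact $K$ the interval $[(K_0:K)^{-1}, (K:K_0)]$. Directing the open neighborhoods $V$ of $e$ by reverse inclusion and invoking Tychonoff's theorem, I extract a cluster value $h(K) = \lim_V h_V(K)$. Shrinking $V$ toward $\{e\}$ upgrades the approximate additivity of (ii) to honest finite additivity, so $h$ is a left-invariant content on compact sets. Finally I would pass from the content $h$ to a regular Borel measure $\mu$ by the Carathéodory outer-measure construction, or equivalently by inducing a positive linear functional on $C_c(G)$ and applying the Riesz representation theorem; left-invariance of $h$ transfers to $\mu$.

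For uniqueness, suppose $\mu$ and $\nu$ are both left Haar measures, and fix $g \in C_c(G)$ with $\int g\, d\nu \neq 0$. For an arbitrary $f \in C_c(G)$ I would compare the two products $\int f\, d\mu \cdot \int g\, d\nu$ and $\int f\, d\nu \cdot \int g\, d\mu$. Writing each as a double integral over $G \times G$, using the left-invariance of $\mu$ and $\nu$ to perform substitutions of the form $y \mapsto xy$ inside the integrals, and interchanging the order of integration by Fubini's theorem (legitimate because $f$ and $g$ have compact support, so only finite-measure regions contribute), both products reduce to the same double integral. It follows that $\int f\, d\mu / \int f\, d\nu$ equals a positive constant $c$ independent of $f$, which forces $\mu = c\,\nu$ on $C_c(G)$ and hence as measures.

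The main obstacle is the existence half. Within it, two points demand the most care: showing that the separation hypothesis in (ii) is eventually satisfied as $V \to \{e\}$, so that the limiting content $h$ is genuinely additive rather than merely subadditive; and rigorously upgrading the finitely additive left-invariant content on compact sets to a $\sigma$-additive regular Borel measure, where regularity and countable additivity must be checked against the outer measure. By contrast, once Fubini's theorem is in hand the uniqueness argument is essentially bookkeeping. Given the depth of this classical theorem, it is reasonable to present the construction in outline and cite a standard reference for the measure-theoretic details.
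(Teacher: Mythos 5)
Your outline is a correct sketch of the classical Weil covering-number construction together with the standard Fubini uniqueness argument, which is exactly the approach taken in the reference the paper defers to (the paper itself gives no proof, only a citation to Gleason's notes). The two points you flag as delicate — recovering genuine additivity in the limit $V \to \{e\}$ and promoting the left-invariant content to a regular Borel measure — are indeed where the real work lies, and citing a standard source for those details is consistent with what the paper does.
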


\begin{proof}
See the proof of Theorem 4.3 on page 6, and Theorem 4.6 on page 11 of \cite{Gleason}.
\end{proof}

\begin{thm}
Let $(G,\cdot)$ be a locally compact topological abelian group. Then, the left Haar measure is also the right Haar measure.
\end{thm}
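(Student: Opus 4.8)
The plan is to exploit commutativity directly at the level of sets, so that left-invariance transfers to right-invariance with essentially no analysis. First I would invoke the existence theorem for Haar measures stated above to fix a left Haar measure $\mu$ on $G$, which exists since $G$ is locally compact; by definition $\mu$ is a nonzero regular Borel measure satisfying $\mu(x\cdot E)=\mu(E)$ for every $x\in G$ and every measurable $E$. The goal is to show this same $\mu$ satisfies $\mu(E\cdot x)=\mu(E)$, which is precisely the condition making it a right Haar measure.

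The key observation is that in an abelian group the left and right translates of a set coincide. For any $x\in G$ and any subset $E\subseteq G$,
\begin{equation*}
x\cdot E=\{x\cdot e:e\in E\}=\{e\cdot x:e\in E\}=E\cdot x,
\end{equation*}
where the middle equality is exactly commutativity applied elementwise. Thus $x\cdot E$ and $E\cdot x$ are literally the same subset of $G$. In particular $E\cdot x$ is measurable if and only if $x\cdot E$ is, so no question of domain arises when comparing the two invariance conditions.

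From here the conclusion is immediate: for any measurable $E$ and any $x\in G$,
\begin{equation*}
\mu(E\cdot x)=\mu(x\cdot E)=\mu(E),
\end{equation*}
the first equality by the set identity above and the second by left-invariance of $\mu$. Since $\mu$ is already a nonzero regular Borel measure, it satisfies every remaining clause in the definition of a right Haar measure, and therefore $\mu$ is simultaneously a left and a right Haar measure.

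I do not expect a genuine obstacle here, since commutativity collapses the distinction between the two sidedness conditions at the set-theoretic level. The only point meriting a word of care is that translation by a fixed $x$ is a homeomorphism of $G$: continuity of the group operation makes $y\mapsto x\cdot y$ and its inverse $y\mapsto x^{-1}\cdot y$ continuous, so the map carries Borel sets to Borel sets and preserves the class of measurable sets appearing above. Alternatively, one could argue abstractly from uniqueness by noting that $E\mapsto\mu(E\cdot x_0)$ is again a left Haar measure, hence a positive scalar multiple of $\mu$, and then checking that the scalar is identically $1$; but the direct set-equality argument is cleaner and I would prefer it.
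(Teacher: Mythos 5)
Your proposal is correct and uses the same essential idea as the paper's proof: commutativity makes the left and right translates $x\cdot E$ and $E\cdot x$ coincide as sets, so left-invariance immediately yields right-invariance. If anything, your version is stated more cleanly (working with a single left Haar measure rather than introducing both $\mu$ and $\mu'$ as the paper does), and your remark that translation is a homeomorphism preserving measurability is a worthwhile point the paper leaves implicit.
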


\begin{proof}
Fix $(G,\cdot)$ a locally compact abelian topological group.

By Theorem 10 and the definition of left and right Haar measures, there exists a left and a right Haar measure $\mu$, $\mu'$ respectively. Fix $E$ a measurable subset of $G$, and $x\in G$. Then, $\mu(E\cdot x)=\mu(E)$, $\mu'(E)=\mu'(x\cdot E)$.

Since $G$ is abelian, $\mu'(E)=\mu'(E\cdot x)$. By definition, this right Haar measure is the left Haar measure.
\end{proof}

By the previous two theorems, we can define the unique Haar measure on any locally compact topological abelian group, since the left Haar measure exists and is unique, and the left Haar measure is also the right Haar measure.

\begin{mydef} (Haar Measure)

Let $(G,\cdot)$ be a locally compact topological abelian group. The nonzero regular Borel measure $\mu$ on $G$ such that $\mu(x\cdot E)=\mu(E)$ for all $x\in G$ and all measurable subsets $E$ of $G$ is called the Haar Measure of $G$.
\end{mydef}

\begin{lem}
$(\mathbb{Q}_p, +)$ is a locally compact topological abelian group. In addition, there exists a unique Haar measure $dx$ such that $\int_{\mathbb{Z}_p}dx=1$, and $d(x+a)=dx$ for any $x, a \in\mathbb{Q}_p$.
\end{lem}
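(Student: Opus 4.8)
The plan is to separate the statement into two independent tasks: first, to verify that $(\mathbb{Q}_p,+)$ is a locally compact topological abelian group, so that the general theory developed earlier in this section applies, and second, to extract from the existence-and-uniqueness theorems for Haar measure a single normalized measure satisfying the two stated properties. Almost every ingredient is already available in the excerpt, so the proof is largely a matter of assembling cited results and pinning down the normalization constant.

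For the first task, commutativity and the group axioms are immediate because $\mathbb{Q}_p$ is a field, so $(\mathbb{Q}_p,+)$ is an abelian group with identity $0$ and inverse $-x$. To upgrade this to a topological group I would check that addition $(x,y)\mapsto x+y$ and negation $x\mapsto -x$ are continuous for the metric induced by $|\cdot|_p$. Both follow at once from the strong triangle inequality of Theorem 2: the bound $|(x+y)-(x_0+y_0)|_p\le\max(|x-x_0|_p,|y-y_0|_p)$ gives joint continuity of addition, while $|(-x)-(-x_0)|_p=|x-x_0|_p$ shows that negation is in fact an isometry. Local compactness is already established as Corollary 1, so $(\mathbb{Q}_p,+)$ meets every hypothesis needed to talk about a Haar measure.

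For the second task, Theorem 10 supplies a left Haar measure that is unique up to a positive scaling constant, and Theorem 11 tells us that on an abelian group this left Haar measure is simultaneously a right Haar measure; I would then name it $\mu$. Writing the group operation additively, the defining invariance $\mu(a+E)=\mu(E)$ for all $a\in\mathbb{Q}_p$ is exactly the translation invariance $d(x+a)=dx$ demanded by the statement. It then remains only to fix the scaling constant by imposing $\mu(\mathbb{Z}_p)=\int_{\mathbb{Z}_p}dx=1$, and to argue that this normalization is both achievable and uniquely determined.

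The hard part will be justifying this normalization, that is, showing $0<\mu(\mathbb{Z}_p)<\infty$ so that $\mu$ may be rescaled to make $\mu(\mathbb{Z}_p)=1$. Finiteness follows because $\mathbb{Z}_p$ is compact by Theorem 9 and a regular Borel measure is finite on compact sets. Positivity is the more delicate point: $\mathbb{Z}_p$ is a closed ball, hence by Theorem 7 also an open ball and by Proposition 6 an open set, so it has nonempty interior; the crux is the standard fact that a nonzero translation-invariant regular Borel measure assigns positive measure to every nonempty open set. The reason is that any compact set can be covered by finitely many translates of a given nonempty open $U$, so if $\mu(U)=0$ then $\mu$ vanishes on all compact sets and hence, by regularity, is the zero measure, contradicting that $\mu$ is nonzero by hypothesis. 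Once $0<\mu(\mathbb{Z}_p)<\infty$ is in hand, dividing $\mu$ by $\mu(\mathbb{Z}_p)$ produces a measure with $\mu(\mathbb{Z}_p)=1$, and the uniqueness up to scaling from Theorem 10 guarantees that this normalized $dx$ is the unique Haar measure with the required properties.
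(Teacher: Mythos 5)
Your proposal is correct and follows essentially the same route as the paper: verify that $(\mathbb{Q}_p,+)$ is a locally compact topological abelian group, invoke Theorems 10 and 11 for existence and uniqueness of the Haar measure up to a positive scalar, and fix the scalar by normalizing on $\mathbb{Z}_p$. You are in fact more complete than the paper in two places it glosses over: the paper cites an exercise in the literature for the topological-group property where you prove continuity of addition directly from the ultrametric inequality, and the paper simply declares the normalization $\int_{\mathbb{Z}_p}dx=1$ without checking $0<\mu(\mathbb{Z}_p)<\infty$, which your compactness and nonempty-interior argument correctly supplies.
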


\begin{proof}
By Corollary 1 in Section 2.3, $\mathbb{Q}_p$ is locally compact. Fix $x,y\in\mathbb{Q}_p$. We have $|x+y|_p=|y+x|_p$. Thus, $(\mathbb{Q}_p, +)$ is abelian. The proof that $(\mathbb{Q}_p, +)$ is a topological group can be seen under Exercise 3.5 on page 9 of \cite{Vladimirov}.

By Theorem 10 and Theorem 11, $(\mathbb{Q}_p, +)$ has a unique Haar measure $dx$ up to scaling by a positive constant such that $d(x+a)=dx$ for any $x \in\mathbb{Q}_p$ and $a>0, a\in\mathbb{Q}_p$. We normalize the measure by setting $\int_{\mathbb{Z}_p}dx=1$. Then $dx$ is unique.
\end{proof}

\begin{thm}
Let $dx$ be the Haar measure for $(\mathbb{Q}_p, +)$. Then, $$d(ax)=\left| a\right|_p dx$$ for all a$\in\mathbb{Q}_p^*$, i.e. $\mathbb{Q}_p\setminus \{0\}$.
\end{thm}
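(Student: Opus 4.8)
The plan is to exploit the uniqueness of the Haar measure established in Theorem 10 and Theorem 11. Fix $a \in \mathbb{Q}_p^*$ and define a set function $\nu$ on measurable subsets of $\mathbb{Q}_p$ by $\nu(E) = \mu(aE)$, where $\mu = dx$ is the normalized Haar measure of Lemma 1 and $aE = \{ ax : x \in E \}$. Since multiplication by $a$ is a homeomorphism of $\mathbb{Q}_p$ (continuous, with continuous inverse given by multiplication by $a^{\shortminus 1}$), it carries Borel sets to Borel sets and preserves regularity, so $\nu$ is again a nonzero regular Borel measure.

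First I would check that $\nu$ is translation invariant. For any $c \in \mathbb{Q}_p$ and measurable $E$, the distributive law $a(E + c) = aE + ac$ gives $\nu(E + c) = \mu(aE + ac) = \mu(aE) = \nu(E)$, where the middle equality is the translation invariance of $\mu$. Hence $\nu$ is a Haar measure on $(\mathbb{Q}_p, +)$, and by the uniqueness statement of Theorem 10 and Theorem 11 there is a positive constant $\lambda = \lambda(a)$ with $\nu = \lambda \mu$; that is, $\mu(aE) = \lambda\, \mu(E)$ for every measurable $E$, which is precisely the assertion $d(ax) = \lambda\, dx$. It remains only to identify $\lambda$ with $|a|_p$.

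To pin down $\lambda$ I would evaluate both measures on $\mathbb{Z}_p$, where $\mu(\mathbb{Z}_p) = 1$, so that $\lambda = \mu(a\mathbb{Z}_p)$. Writing $|a|_p = p^{\shortminus\gamma}$ as in Definition 2, we have $a = p^{\gamma} u$ with $|u|_p = 1$; since multiplication by the unit $u$ is a bijection of $\mathbb{Z}_p$ onto itself, $a\mathbb{Z}_p = p^{\gamma}\mathbb{Z}_p$, and the problem reduces to computing $\mu(p^{\gamma}\mathbb{Z}_p)$. The key computation is a coset decomposition: when $\gamma \geq 0$, the sets $\{ i + p^{\gamma}\mathbb{Z}_p : 0 \leq i \leq p^{\gamma} - 1 \}$ partition $\mathbb{Z}_p$ into $p^{\gamma}$ translates of $p^{\gamma}\mathbb{Z}_p$, each of equal $\mu$-measure by translation invariance, so $\mu(p^{\gamma}\mathbb{Z}_p) = p^{\shortminus\gamma}$; when $\gamma < 0$, running the same idea in reverse decomposes $p^{\gamma}\mathbb{Z}_p$ into $p^{\shortminus\gamma}$ translates of $\mathbb{Z}_p$, giving $\mu(p^{\gamma}\mathbb{Z}_p) = p^{\shortminus\gamma}$ again. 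In both cases $\lambda = p^{\shortminus\gamma} = |a|_p$, completing the proof.

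I expect the main obstacle to be the measure computation $\mu(a\mathbb{Z}_p) = |a|_p$ rather than the invariance argument. One must justify carefully that the finitely many cosets $i + p^{\gamma}\mathbb{Z}_p$ genuinely partition $\mathbb{Z}_p$ (which follows from the uniqueness of the $p$-adic expansion in Proposition 3) and that each has the same measure, and one must keep the two sign cases for $\gamma$ straight. By contrast, the translation-invariance check and the appeal to uniqueness are essentially formal once $\nu$ has been recognized as a legitimate regular Borel measure.
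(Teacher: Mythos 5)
Your proposal is correct and follows essentially the same route as the paper: recognize $E \mapsto \mu(aE)$ as a translation-invariant regular Borel measure, invoke uniqueness of the Haar measure to get a constant $\lambda$, and identify $\lambda = |a|_p$ by decomposing into $p$-adic cosets with $U = \mathbb{Z}_p$ as the test set. You are somewhat more explicit than the paper about the translation-invariance check, the reduction $a\mathbb{Z}_p = p^{\gamma}\mathbb{Z}_p$ via the unit $u$, and the case $\gamma < 0$ (which the paper only sketches), but the underlying argument is the same.
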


\begin{proof}
This proof is inspired by (3.7) on page 13 of \cite{Zuniga}. 

\vspace{0.3cm}

We want to equivalently show that $\int_{aU}dx=|a|_p\int_U dx$ for any Borel set $U\subseteq\mathbb{Q}_p$. 

Fix a$\in\mathbb{Q}_p^*$. Then U$\mapsto\int_{aU}dx$ is a Haar measure for ($\mathbb{Q}_p$,+). Hence, there exists a unique positive constant C such that $\int_{aU}dx=C\int_Udx$ for any Borel set $U\subseteq\mathbb{Q}_p$. 

We fix $U=\mathbb{Z}_p$ to calculate C.
Assume $a\in\mathbb{Z}_p$. Then $a=p^m u$, where $m\in\mathbb{N}$, and $u \in\mathbb{Z}_p^*$.

We have
\begin{align*}
\int_{\mathbb{Z}_p} 1 \text{d}x &=\sum_{b\in\mathbb{Z}_p /p^m \mathbb{Z}_p} \int_{b+p^m \mathbb{Z}_p} \text{d}x \\
&= \sum_{b\in\mathbb{Z}_p/p^m \mathbb{Z}_p} \int_{p^m\mathbb{Z}_p} \text{d}x \\
&= p^m \int_{p^m\mathbb{Z}_p} \text{d}x =1
\end{align*}

Note that the first equation derives from the fact that $$\mathbb{Z}_p=\bigcup_{b\in\mathbb{Z}_p/p^m\mathbb{Z}_p}(b+p^m\mathbb{Z}_p).$$ The second equation is based on Lemma 1. The term $p^m$ in the third equality represents the number of distinct cosets of $p^m\mathbb{Z}_p$ in $\mathbb{Z}_p$, since the $p$-adic expansion of the form $b_0+b_1p+b_2p^2+...+b_{m-1}p^{m-1}+p^m\mathbb{Z}_p$ has $p$ choices for each $b_k$, with $k$ ranging from $0$ to $m-1$.

Thus, 
\begin{equation*}
|a|_p=p^{-m}=\int_{a\mathbb{Z}_p}\text{d}x.
\end{equation*}

If $x\in\mathbb{Q}_p\setminus\mathbb{Z}_p$, we can get the same result by setting $a=p^{-m}u$, and $b\in{p^{-m} \mathbb{Z}_p}/\mathbb{Z}_p$. 
Thus, $C=|a|_p$, and $d(ax)=|a|_p dx$.
\end{proof}

\begin{thm}
For any m$\in\mathbb{Z}$
$$\mu(p^m \mathbb{Z}_p) = p^{-m}.$$
\end{thm}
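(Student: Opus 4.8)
The plan is to deduce this directly from the scaling behaviour of the Haar measure established in Theorem 12, combined with the normalization $\int_{\mathbb{Z}_p} dx = 1$ from Lemma 1. The key observation is that $p^m\mathbb{Z}_p$ is exactly the dilation of $\mathbb{Z}_p$ by the scalar $a = p^m$, that is, $p^m\mathbb{Z}_p = \{p^m z : z \in \mathbb{Z}_p\}$. To see this I would start from Definition 11, which gives $p^m\mathbb{Z}_p = \{x \in \mathbb{Q}_p : |x|_p \leq p^{-m}\}$; since $|p^m z|_p = p^{-m}|z|_p$, a $p$-adic number $z$ satisfies $|z|_p \leq 1$ if and only if $p^m z$ satisfies $|p^m z|_p \leq p^{-m}$, so multiplication by $p^m$ is a bijection from $\mathbb{Z}_p$ onto $p^m\mathbb{Z}_p$.

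Next I would note that $p^m \in \mathbb{Q}_p^*$ for every $m \in \mathbb{Z}$, since $p^m \neq 0$. This is precisely what allows Theorem 12 to be applied for all integers $m$, including the negative values (where $p^m\mathbb{Z}_p$ strictly contains $\mathbb{Z}_p$) and $m = 0$ (where $p^0\mathbb{Z}_p = \mathbb{Z}_p$ and the asserted value $p^0 = 1$ agrees with the normalization). Applying Theorem 12 in its integrated form $\int_{aU} dx = |a|_p \int_U dx$ with $a = p^m$ and $U = \mathbb{Z}_p$ then gives
\begin{equation*}
\mu(p^m\mathbb{Z}_p) = \int_{p^m\mathbb{Z}_p} dx = |p^m|_p \int_{\mathbb{Z}_p} dx.
\end{equation*}

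Finally I would evaluate the two factors on the right-hand side. By Definition 2 (equivalently Proposition 4 with $\gamma = m$) we have $|p^m|_p = p^{-m}$, and Lemma 1 supplies $\int_{\mathbb{Z}_p} dx = 1$, so the product collapses to $\mu(p^m\mathbb{Z}_p) = p^{-m}$, as claimed. I do not expect any genuine obstacle here, since the substantive computation was already carried out inside the proof of Theorem 12; the only point meriting care is confirming that the full range $m \in \mathbb{Z}$ is covered, which is handled by the observation that $p^m$ is a nonzero $p$-adic number for every integer $m$, so that the hypothesis $a \in \mathbb{Q}_p^*$ of Theorem 12 is always satisfied.
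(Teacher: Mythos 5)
Your proposal is correct and matches the paper's own proof: both perform the change of variables $x = p^m y$, invoke Theorem 12 to get $d(p^m y) = |p^m|_p\,dy$, and conclude using the normalization $\int_{\mathbb{Z}_p} dx = 1$. Your version is slightly more careful in verifying that multiplication by $p^m$ is a bijection of $\mathbb{Z}_p$ onto $p^m\mathbb{Z}_p$ and that $p^m \in \mathbb{Q}_p^*$ for all $m \in \mathbb{Z}$, but the substance is identical.
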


\begin{proof}
Fix m$\in\mathbb{Z}$. Then 
\begin{align*}
\mu(p^m \mathbb{Z}_p)&=\int_{x\in p^m \mathbb{Z}_p}dx=\int_{y\in\mathbb{Z}_p}d(p^m y) \\
&=\int_{y\in\mathbb{Z}_p} |p^m|_p dy \\
&=p^{-m}\int_{y\in\mathbb{Z}_p}dy \\
&=p^{-m}.
\end{align*}

\end{proof}

\begin{cor}
For any $\gamma \in \mathbb{Z}$, we have
$$\int_{{\left| x\right|_p}=p^\gamma}dx=p^\gamma (1-\frac{1}{p}).$$
\end{cor}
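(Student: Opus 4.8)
The plan is to recognize the sphere $\{|x|_p = p^\gamma\}$ as the set-theoretic difference of two nested closed balls and then compute each ball's measure via Theorem 14. First I would observe that, because the $p$-adic norm takes values only in $\{p^k : k \in \mathbb{Z}\} \cup \{0\}$, there are no admissible norms strictly between $p^{\gamma-1}$ and $p^\gamma$; consequently
\begin{equation*}
\{x \in \mathbb{Q}_p : |x|_p = p^\gamma\} = \{x : |x|_p \leq p^\gamma\} \setminus \{x : |x|_p \leq p^{\gamma-1}\},
\end{equation*}
and the second set is genuinely a subset of the first, so the two pieces on the right are disjoint.

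Next I would translate each closed ball into the notation $p^n\mathbb{Z}_p$. By the definition of $p^n\mathbb{Z}_p$ (the set where $|x|_p \leq p^{-n}$), matching exponents gives $\{|x|_p \leq p^\gamma\} = p^{-\gamma}\mathbb{Z}_p$ and $\{|x|_p \leq p^{\gamma-1}\} = p^{1-\gamma}\mathbb{Z}_p$. Applying Theorem 14, which states $\mu(p^m\mathbb{Z}_p) = p^{-m}$ for every $m \in \mathbb{Z}$, then yields $\mu(p^{-\gamma}\mathbb{Z}_p) = p^{\gamma}$ and $\mu(p^{1-\gamma}\mathbb{Z}_p) = p^{\gamma-1}$.

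Finally, by finite additivity of the Haar measure on the disjoint decomposition above, I would conclude
\begin{equation*}
\int_{|x|_p = p^\gamma} dx = p^\gamma - p^{\gamma-1} = p^\gamma\left(1 - \frac{1}{p}\right),
\end{equation*}
which is the desired identity. The one step deserving genuine care — the main obstacle, though a mild one — is justifying the rewriting of the open condition $|x|_p < p^\gamma$ as the closed condition $|x|_p \leq p^{\gamma-1}$; this rests entirely on the discreteness of the $p$-adic value group and is exactly the phenomenon already exploited in Theorem 7 (every open ball is a closed ball in $\mathbb{Q}_p$). Once that identification is in hand, the remainder is a direct bookkeeping of exponents and a single application of Theorem 14.
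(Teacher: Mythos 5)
Your proposal is correct and follows the same route as the paper: write the sphere $\{|x|_p = p^\gamma\}$ as the difference of the two closed balls $\{|x|_p \leq p^\gamma\}$ and $\{|x|_p \leq p^{\gamma-1}\}$, identify these with $p^{-\gamma}\mathbb{Z}_p$ and $p^{1-\gamma}\mathbb{Z}_p$, and apply the formula $\mu(p^m\mathbb{Z}_p)=p^{-m}$ (Theorem 13 in the paper's numbering) to get $p^\gamma - p^{\gamma-1}$. Your added justification that the discreteness of the value group makes the two descriptions of the sphere agree is a point the paper leaves implicit, but the argument is otherwise identical.
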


\begin{proof}
\begin{equation*}
\int_{\left| x\right|_p}dx=\int_{\left| x\right|_p\leq p^\gamma}dx-\int_{\left| x\right|_p\leq p^{\gamma-1}}dx=p^\gamma-p^{\gamma-1}
\end{equation*}
 by Theorem 13.
\end{proof}

\begin{cor}
For any $s > -1,\  s\in\mathbb{Z}$, we have
$$\int_{\mathbb{Z}_p} |x|_p^s =\frac{p-1}{p-p^{\shortminus s}}.$$ 
\end{cor}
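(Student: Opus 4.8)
The plan is to decompose $\mathbb{Z}_p$ into the level sets of the $p$-adic norm and integrate $|x|_p^s$ piecewise, exploiting the fact that the integrand is constant on each such set. By the Remark following Definition 15, the set $\{x\in\mathbb{Q}_p : |x|_p = p^{-n}\}$ is exactly $p^n\mathbb{Z}_p\setminus p^{n+1}\mathbb{Z}_p$, and these sets for $n=0,1,2,\dots$, together with the single point $\{0\}$, form a disjoint partition of $\mathbb{Z}_p$. The point $\{0\}$ contributes nothing: it lies in $p^m\mathbb{Z}_p$ for every $m$, so its Haar measure is bounded by $\mu(p^m\mathbb{Z}_p)=p^{-m}\to 0$ by Theorem 13, forcing $\mu(\{0\})=0$.

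First I would note that $|x|_p^s$ takes the constant value $(p^{-n})^s = p^{-ns}$ on the level set $\{|x|_p = p^{-n}\}$. Since $|x|_p^s\geq 0$ on $\mathbb{Z}_p$ (here $s\geq 0$ because $s>-1$ and $s\in\mathbb{Z}$), countable additivity of the Haar measure over the disjoint level sets gives
\begin{equation*}
\int_{\mathbb{Z}_p} |x|_p^s\, dx = \sum_{n=0}^{\infty} p^{-ns}\int_{|x|_p = p^{-n}} dx.
\end{equation*}
I would then substitute each piece using Corollary 2 with $\gamma = -n$, namely $\int_{|x|_p = p^{-n}} dx = p^{-n}\left(1-\tfrac{1}{p}\right)$, to obtain
\begin{equation*}
\int_{\mathbb{Z}_p} |x|_p^s\, dx = \left(1-\frac{1}{p}\right)\sum_{n=0}^{\infty} p^{-n(s+1)}.
\end{equation*}

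The final step is to evaluate the geometric series. Because $s>-1$, the common ratio $p^{-(s+1)}$ satisfies $0 < p^{-(s+1)} \leq p^{-1} < 1$, so the series converges to $\left(1-p^{-(s+1)}\right)^{-1}$; using $1 - p^{-s-1} = \tfrac{p - p^{-s}}{p}$ and simplifying yields the claimed value $\tfrac{p-1}{p-p^{-s}}$. This is a short computation once the pieces are assembled, so the only genuinely load-bearing point is the hypothesis $s>-1$: it is precisely what makes the geometric ratio strictly less than $1$ and hence guarantees convergence (for $s=-1$ the series would diverge, and for $s<-1$ it would blow up). The secondary point to be careful about is the interchange of summation and integration, which is justified by countable additivity of the Borel/Haar measure over the disjoint level sets together with the nonnegativity of $|x|_p^s$ (monotone convergence).
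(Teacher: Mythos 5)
Your proposal is correct and follows essentially the same route as the paper: decompose $\mathbb{Z}_p$ into the level sets $\{|x|_p=p^{-\gamma}\}$ for $\gamma\geq 0$, apply Corollary 2 to each piece, and sum the resulting geometric series, whose convergence is exactly where $s>-1$ is used. Your additional remarks on the measure-zero contribution of $\{0\}$ and the justification for interchanging sum and integral are sound refinements of the same argument.
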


\begin{proof}
For $s>-1$ we have
\begin{align*}
&\int_{\mathbb{Z}_p}\left| x\right|_p^sdx %=\sum^{\shortminus\gamma=0}_{\shortminus\infty}\int_{\left| x\right|_p=p^{\shortminus{\gamma}}} p^{\shortminus{\gamma}s}dx
=\sum^{\infty}_{\gamma=0}\int_{\left| x\right|_p=p^{\shortminus{\gamma}}} p^{\shortminus{\gamma}s}dx
\\&\quad
=\sum^{\infty}_{\gamma=0} p^{\shortminus{\gamma}s}(p^{-\gamma}-p^{-{\gamma-1}})
=\frac{p-1}{p-p^{\shortminus s}}
\end{align*}
using the geometric series formula.
\end{proof}

\begin{cor}
When $s<-1$ we have
$$\int_{\mathbb{Q}_p \setminus \mathbb{Z}_p} |x|_p^s= -\frac{p-1}{p-p^{\shortminus s}}.$$ 
\end{cor}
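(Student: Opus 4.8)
The plan is to mirror the computation used for Corollary 4, decomposing the domain of integration into the spheres on which the integrand is constant and then summing the resulting geometric series. The key observation is that $\mathbb{Q}_p \setminus \mathbb{Z}_p$ is precisely the set of $x$ with $|x|_p > 1$, and since the $p$-adic norm takes only the values $p^\gamma$ with $\gamma \in \mathbb{Z}$, this set is the disjoint union of the spheres $\{x : |x|_p = p^\gamma\}$ for $\gamma = 1, 2, 3, \dots$.

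First I would write, using countable additivity of the Haar measure together with the fact that these spheres are disjoint and exhaust $\mathbb{Q}_p \setminus \mathbb{Z}_p$,
\begin{equation*}
\int_{\mathbb{Q}_p \setminus \mathbb{Z}_p} |x|_p^s\, dx = \sum_{\gamma=1}^{\infty} \int_{|x|_p = p^\gamma} |x|_p^s\, dx.
\end{equation*}
On the sphere $|x|_p = p^\gamma$ the integrand equals the constant $p^{\gamma s}$, so it pulls out of the integral, and Corollary 3 gives $\int_{|x|_p = p^\gamma} dx = p^\gamma(1 - 1/p)$. Hence each summand becomes $p^{\gamma s}\, p^\gamma (1 - 1/p) = (1 - 1/p)\, p^{\gamma(s+1)}$, and the whole expression is $(1 - 1/p)$ times the geometric series $\sum_{\gamma=1}^{\infty} (p^{s+1})^\gamma$.

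The one place where the hypothesis $s < -1$ is essential is the convergence of this geometric series: its common ratio is $p^{s+1}$, and $\sum_{\gamma \geq 1} (p^{s+1})^\gamma$ converges exactly when $p^{s+1} < 1$, i.e. when $s + 1 < 0$. This is the direct analogue of the condition $s > -1$ appearing in Corollary 4, where the series instead ran over decreasing powers. Summing gives $\sum_{\gamma=1}^{\infty}(p^{s+1})^\gamma = \frac{p^{s+1}}{1 - p^{s+1}}$, so the integral equals $\frac{p-1}{p} \cdot \frac{p^{s+1}}{1 - p^{s+1}}$.

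It then remains only to put this closed form into the stated shape $-\frac{p-1}{p - p^{-s}}$, which is a routine algebraic manipulation: combining and multiplying numerator and denominator by $p^{-s}$ yields $\frac{p-1}{p^{-s} - p} = -\frac{p-1}{p - p^{-s}}$ (note that for $s < -1$ this quantity is in fact positive, consistent with integrating a positive function). I do not anticipate any genuine obstacle; the only points needing care are correctly identifying that the index range starts at $\gamma = 1$ rather than $\gamma = 0$, since $\mathbb{Z}_p$ itself is excluded, and verifying the convergence condition, both of which are bookkeeping rather than conceptual difficulties.
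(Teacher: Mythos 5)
Your proposal is correct and follows essentially the same route as the paper: decompose $\mathbb{Q}_p\setminus\mathbb{Z}_p$ into the spheres $\{|x|_p=p^\gamma\}$ for $\gamma\geq 1$, use the measure $p^\gamma(1-\tfrac{1}{p})$ of each sphere, and sum the resulting geometric series, with $s<-1$ guaranteeing the ratio $p^{s+1}<1$; you even supply the final algebraic simplification that the paper leaves implicit. The only slip is a labeling one: the sphere-measure formula you cite is the paper's Corollary 2, not Corollary 3.
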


\begin{proof}
Fix $s<-1$. Then
\begin{align*}
\int_{\mathbb{Q}_p\setminus{\mathbb{Z}_p}}\left| x\right|_p^s dx=\sum^{\infty}_{\shortminus\gamma=1}\int_{\left| x\right|_p=p^{\shortminus{\gamma}}} p^{\shortminus{\gamma}s}dx=\sum^{\infty}_{\shortminus\gamma=1}p^{\shortminus{\gamma s}} (p^{-\gamma}-p^{-{\gamma-1}}).
\end{align*}
This sum equals $-\frac{p-1}{p-p^{\shortminus s}}$ by the geometric series formula. 
\end{proof}

Theorem 12, Theorem 13, and Corollaries 2, 3, and 4 give a good foundation for the rules of $p$-adic calculus, and are the basic tools for solving $p$-adic integration problems in the following sections.

\section{A $p$-adic derivative operator}

This is a brief section introducing the $p$-adic derivative operator and its two key properties.
\begin{mydef}
Let $\alpha>0$. The $p$-adic derivative operator $D^{\alpha}$ is defined as 
\begin{equation}
D^\alpha f(x)=\frac{p^\alpha -1}{1-p^{-\alpha-1}} \int_{\mathbb{Q}_p} \frac{f(x)-f(y)}{\left| x-y\right|_p^{\alpha+1}}dy
\end{equation}
for any complex-valued function $f(x)$ with variables in the $p$-adic field $\mathbb{Q}_p$. We define the $p$-adic Gamma function to be the function
\begin{equation}
\Gamma_p(x)=\frac{1-p^{x-1}}{1-p^{\shortminus x}}
\end{equation}
for $x\neq0$. Then, 
\begin{equation}
D^\alpha f(x)=\frac{1}{\Gamma_p  ({\shortminus \alpha})} \int_{\mathbb{Q}_p}  \frac{f(y)-f(x)}{\left| x-y\right|_p^{\alpha+1}}dy.
\end{equation}

\end{mydef}

There are two important properties of the differential operator which are stated in the following proposition.
\begin{prop}
\begin{itemize}
\item[(1)] For $\alpha, \beta>0$, we have $$D^\alpha [D^\beta f(x)]=D^{\alpha+\beta} f(x)$$

\item[(2)] For $n \in \mathbb{N}$, $\alpha>0$, we have
\begin{equation*}
D^\alpha {\left| x\right|_p}^n=\frac{\Gamma_p(n+1)}{\Gamma_p(n-\alpha+1)} \left| x\right|_p^{n-\alpha}.
\end{equation*}
\end{itemize}
\end{prop}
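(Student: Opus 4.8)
The plan is to handle the two parts separately, starting with the explicit formula in part (2), which is the more self-contained of the two.

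For part (2) I would first exploit the homogeneity of $|x|_p^n$. Writing the operator as
\[ D^\alpha |x|_p^n = \frac{1}{\Gamma_p(-\alpha)}\int_{\mathbb{Q}_p}\frac{|y|_p^n-|x|_p^n}{|x-y|_p^{\alpha+1}}\,dy, \]
I would fix $x\neq 0$ and substitute $y=xu$. By Theorem 12 this gives $dy=|x|_p\,du$, while $|y|_p^n=|x|_p^n|u|_p^n$ and $|x-y|_p^{\alpha+1}=|x|_p^{\alpha+1}|1-u|_p^{\alpha+1}$, so every power of $|x|_p$ collects into a single factor $|x|_p^{n-\alpha}$. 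This reduces the claim to the constant-integral identity
\[ \frac{1}{\Gamma_p(-\alpha)}\int_{\mathbb{Q}_p}\frac{|u|_p^n-1}{|1-u|_p^{\alpha+1}}\,du=\frac{\Gamma_p(n+1)}{\Gamma_p(n-\alpha+1)}. \]

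To evaluate the integral I would split $\mathbb{Q}_p$ by the ultrametric trichotomy for $|u|_p$. On the sphere $|u|_p=1$ the numerator $|u|_p^n-1$ is identically zero, so that whole region --- including the potential singularity of the kernel at $u=1$ --- contributes nothing. On $|u|_p<1$ one has $|1-u|_p=1$, and on $|u|_p>1$ one has $|1-u|_p=|u|_p$; in each case the integrand collapses to a sum of pure powers of $|u|_p$, which I would integrate sphere by sphere using Corollary 2 (equivalently, package the complementary piece with Corollaries 3 and 4) and sum as geometric series. Combining the two contributions and simplifying with $\Gamma_p(x)=(1-p^{x-1})/(1-p^{-x})$ should yield the stated ratio. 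The main obstacle here is convergence: the series over $|u|_p>1$ converges only when $\alpha>n$, so on the remaining range the identity must be obtained by meromorphic continuation in $\alpha$, the standard device for such $p$-adic integrals; I would state this step explicitly rather than leave it implicit.

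For part (1), the composition law, the conceptually cleanest route is to recognize $D^\alpha$ as a Fourier multiplier: with respect to the $p$-adic Fourier transform $\mathcal{F}$ one has $\mathcal{F}(D^\alpha f)(\xi)=|\xi|_p^\alpha\,\mathcal{F}f(\xi)$, whence $\mathcal{F}(D^\alpha D^\beta f)=|\xi|_p^{\alpha}|\xi|_p^{\beta}\,\mathcal{F}f=|\xi|_p^{\alpha+\beta}\,\mathcal{F}f=\mathcal{F}(D^{\alpha+\beta}f)$ and the identity follows by Fourier inversion. Equivalently, and without leaving the integral formulation, $D^\alpha$ is convolution against a Riesz-type kernel proportional to $|x|_p^{-\alpha-1}/\Gamma_p(-\alpha)$, so part (1) is the convolution identity for these kernels; this can be verified by inserting the definition of $D^\beta f$ into that of $D^\alpha$, interchanging the order of integration, and computing the composite kernel by the same sphere-integration technique as in part (2). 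This is where I expect the real difficulty to lie: the Fourier argument needs the theory of $p$-adic distributions and the multiplier identity, neither developed in the earlier sections, while the direct double integral demands a careful justification of Fubini and of the regularization at the diagonal $x=y$. I would therefore either build the minimal Fourier-analytic machinery first or else cite the corresponding result in Vladimirov.
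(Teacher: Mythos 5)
Your treatment of part (2) is essentially the paper's own proof: the paper likewise splits the integral into the regions $|y|_p>|x|_p$, $|y|_p=|x|_p$ (which vanishes because the numerator is zero there), and $|y|_p<|x|_p$, reduces each piece to a geometric series over the spheres $|y|_p=p^k$ using the measure of a sphere from Corollary 2, and then invokes analytic continuation because one series converges only for $n<\alpha$ and the other only for $n>-1$. Your preliminary substitution $y=xu$ is a harmless normalization that the paper avoids by instead setting $|x|_p=p^t$ and factoring the powers of $p^t$ out at the end; the computations are otherwise identical, and your explicit flagging of the continuation step is if anything more careful than the paper's one-sentence remark. One point of comparison worth noting: the paper states both identities but proves only the second, explicitly declining to prove part (1), so your Fourier-multiplier / Riesz-kernel sketch of the composition law goes beyond what the source contains. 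Your assessment that a full proof of part (1) would require either developing the $p$-adic Fourier transform and the multiplier identity $\mathcal{F}(D^\alpha f)(\xi)=|\xi|_p^\alpha\mathcal{F}f(\xi)$, or a Fubini argument with careful regularization at the diagonal, or else a citation to Vladimirov, is exactly right --- and is presumably why the paper omits that half.
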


\begin{proof}
We prove the second identity because its proof gives readers an idea about how $p$-adic integration looks like and helps with the understanding of a similar calculation in Section 5 of the thesis. 

\vspace{0.4cm}

We have

\begin{align*}
&D^\alpha {\left| x\right|_p}^n=\frac{1}{\Gamma_p  ({\shortminus \alpha})} \int_{\mathbb{Q}_p}  \frac{{\left| y\right|_p}^n -\left| x\right|_p^n}{\left| x-y\right|_p^{\alpha+1}}dy
\\&\quad
=\frac{1}{\Gamma_p(-\alpha)} \left(\int_{|x|_p<|y|_p} \frac{{\left| y\right|_p}^n -\left| x\right|_p^n}{\left| x-y\right|_p^{\alpha+1}}dy+0+\int_{|x|_p>|y|_p} \frac{{\left| y\right|_p}^n -\left| x\right|_p^n}{\left| x-y\right|_p^{\alpha+1}}dy\right).
\end{align*}

Set $|x|_p=p^t$, where $t$ is some integer.
Then, 
\begin{align*}
&D^\alpha {\left| x\right|_p}^n
=\frac{1}{\Gamma_p(-\alpha)} \left(\int_{|x|<|y|} \frac{|y|_p^n-p^{tn}}{|y|_p^{\alpha+1}}dy+\int_{|x|>|y|} \frac{|y|_p^n-p^{tn}}{p^{t(\alpha+1)}}dy\right)
\\&\quad
=\frac{1}{\Gamma_p(-\alpha)} \left(\sum_{k=t+1}^\infty \frac{p^{kn}-p^{tn}}{p^{k(\alpha+1)}}p^k \left(1-\frac{1}{p}\right)+\sum_{-\infty}^{k=t-1}\frac{p^{kn}-p^{tn}}{p^{t(\alpha+1)}}p^k\left(1-\frac{1}{p}\right)\right).
\end{align*}

The left series converges only if $n<\alpha$, the right series converges only if $n+1>0$. However, we can appropriately analytically continue the two series.

Then after calculation, 
\begin{equation*}
D^\alpha {\left| x\right|_p}^n=\frac{\Gamma_p(n+1)}{\Gamma_p(n-\alpha+1)} \left| x\right|_p^{n-\alpha}.
\end{equation*}
\end{proof}

\section{The $p$-adic Sch\"{o}dinger equation}

\subsection{Classical Schr\"{o}dinger equation}

We begin by introducing the classical time-independent Schr\"{o}dinger equation and its solutions following results in Chapter 2 of \cite{Griffiths}.

\begin{mydef}
A time-independent Schr\"{o}dinger equation is an equation of the form
\begin{equation}
-\frac{h^2}{2m}\frac{d^2\Psi}{dx^2}+\frac{1}{2}m\omega^2x^2\Psi=E\Psi, 
\end{equation}
where $h,m,\omega, E$ are some complex constants. Here $E$ is called the energy and satisfies $E\geq0$, and $x$ is the variable of the function $\Psi(x)$.
\end{mydef}

Fix a time-independent Schr\"{o}dinger equation. It can be rewritten as:
\begin{equation}
\frac{1}{2m}\left[\left(\frac{h}{i}\frac{d}{dx}\right)^2+(m\omega x)^2 \right]\Psi=E\Psi
\end{equation}

The left side of the above equation inspires us to think about the factorization of complex numbers, $x^2+y^2=(x-iy)(x+iy)$. We consider 
\begin{equation}
a_+=\frac{1}{\sqrt{2m}} \left(\frac{h}{i}\frac{d}{dx}+im\omega x\right)
\end{equation}
\begin{equation}
a_-=\frac{1}{\sqrt{2m}}\left(\frac{h}{i}\frac{d}{dx}-im\omega x\right).
\end{equation}

We wish to explore the operators $a_-a_+$ and $a_+a_-$ using an arbitrary test function $f(x)$.

After a calculation we have 
\begin{equation}
a_-a_+=\frac{1}{2m}\left[\left(\frac{h}{i}\frac{d}{dx}\right)^2+\left(m\omega x\right)^2 \right]+\frac{1}{2}h\omega
\end{equation}

\begin{equation}
a_+a_-=\frac{1}{2m}[(\frac{h}{i}\frac{d}{dx})^2+(m\omega x)^2]-\frac{1}{2}h\omega
\end{equation}

By equations (8), (11), (12), we have:
\begin{equation}
(a_-a_+-\frac{1}{2}h\omega)\Psi=E\Psi
\end{equation}

\begin{equation}
(a_+a_-+\frac{1}{2}h\omega)\Psi=E\Psi.
\end{equation}

\begin{remark}

Equation (13) and Equation(14) are just another form of the

Schr\"{o}dinger equation in (7).
\end{remark}

\begin{thm}
Fix a time-independent Schr\"{o}dinger equation 
\begin{equation*}
-\frac{h^2}{2m}\frac{d^2\Psi}{dx^2}+\frac{1}{2}m\omega^2x^2\Psi=E\Psi, 
\end{equation*}
where h,m,$\omega$, $E$ are some complex constants, $E$ denotes the energy, and $x$ is the variable of the function $\Psi(x)$. 
Let $a_+, a_-$ be the operators in (9) and (10). If $\Psi_0$ satisfies the Schrodinger Equation, then:

\begin{itemize}
\item[(a)] $a_+\Psi_0$ is a solution to the new Schr\"{o}dinger equation with energy $(E+h\omega)$

\item[(b)] $a_-\Psi_0$ is a solution to the new Schrodinger Equation with energy $(E-h\omega)$.
\end{itemize}
\end{thm}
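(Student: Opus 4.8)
The plan is to derive both statements purely from the operator reformulations (13) and (14) of the Schr\"{o}dinger equation, treating $a_+$ and $a_-$ as linear operators and exploiting the associativity of operator composition. The essential structural fact is the commutation relation obtained by subtracting (12) from (11), namely $a_-a_+ - a_+a_- = h\omega$; equivalently, the factorizations (11) and (12) let me read off $a_-a_+\Psi_0 = (E + \tfrac{1}{2}h\omega)\Psi_0$ and $a_+a_-\Psi_0 = (E - \tfrac{1}{2}h\omega)\Psi_0$ directly from the two forms (13) and (14) applied to $\Psi_0$.

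For part (a), I would set $\Phi = a_+\Psi_0$ and check that it solves the Schr\"{o}dinger equation in the form (14) with energy $E + h\omega$. The computation is $a_+a_-\Phi = a_+a_-(a_+\Psi_0) = a_+(a_-a_+\Psi_0)$, where the last equality is just associativity of composition. Substituting the reformulation $a_-a_+\Psi_0 = (E + \tfrac{1}{2}h\omega)\Psi_0$ and pulling the scalar through the linear operator $a_+$ gives $a_+a_-\Phi = (E + \tfrac{1}{2}h\omega)\Phi$, so that $(a_+a_- + \tfrac{1}{2}h\omega)\Phi = (E + h\omega)\Phi$, which is precisely (14) with $E$ replaced by $E + h\omega$.

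For part (b), I would argue symmetrically with $\Xi = a_-\Psi_0$, this time using the form (13). Here $a_-a_+\Xi = a_-(a_+a_-\Psi_0) = (E - \tfrac{1}{2}h\omega)\,a_-\Psi_0$, whence $(a_-a_+ - \tfrac{1}{2}h\omega)\Xi = (E - h\omega)\Xi$, which is (13) with energy $E - h\omega$. Since (13) and (14) are equivalent restatements of the original equation (7), as noted in the Remark, exhibiting $a_+\Psi_0$ and $a_-\Psi_0$ as solutions of these forms establishes the theorem.

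I do not expect a genuine obstacle here: the argument is a short exercise in operator algebra once (11)--(14) are in hand, and it requires no analytic input. The only points requiring care are the bookkeeping of the energy-shift constants $\pm\tfrac{1}{2}h\omega$ and $\pm h\omega$, and the implicit use that $a_+$ and $a_-$ are linear so that scalars commute past them. One should also confirm that the two factored forms $a_-a_+ - \tfrac{1}{2}h\omega$ and $a_+a_- + \tfrac{1}{2}h\omega$ represent the same Hamiltonian, so that the phrase ``solves the Schr\"{o}dinger equation with energy $E'$'' is unambiguous regardless of which of (13) or (14) is used.
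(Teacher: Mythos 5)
Your proposal is correct and follows essentially the same route as the paper: both proofs set $\Phi=a_+\Psi_0$, use associativity to write $a_+a_-a_+\Psi_0=a_+(a_-a_+\Psi_0)$, substitute the reformulated equation to extract the scalar $E+\tfrac{1}{2}h\omega$, and conclude that $(a_+a_-+\tfrac{1}{2}h\omega)\Phi=(E+h\omega)\Phi$. The only cosmetic difference is that you carry out part (b) explicitly and symmetrically, whereas the paper dismisses it as analogous.
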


\begin{proof}
It is sufficient to prove (a), since (b) can be proved similarly.

We claim $(a_+a_-+\frac{1}{2}h\omega)(a_+\Psi)=(E+h\omega)(a_+\Psi)$, which is equivalent to (a). Indeed,

\begin{align*}
&(a_+a_-+\frac{1}{2}h\omega)(a_+\Psi)=(a_+a_-a_++\frac{1}{2}h\omega a_+)\Psi
\\&\quad
=a_+(a_-a_++\frac{1}{2}h\omega)\Psi=a_+[(a_-a_+-\frac{1}{2}h\omega)\Psi+h\omega\Psi]
\\&\quad
=a_+(E\Psi+h\omega\Psi)=(E+h\omega)(a_+\Psi).
\end{align*}
\end{proof}

By Theorem 14, we find $a_+, a_-$ are operators that help us find solutions to the Schr\"{o}dinger equation with raising or lowering energy. Therefore, we make the following definition.

\begin{mydef}
For a time-independent Schr\"{o}dinger equation
\begin{equation*}
-\frac{h^2}{2m}\frac{d^2\Psi}{dx^2}+\frac{1}{2}m\omega^2x^2\Psi=E\Psi, 
\end{equation*}
$a_+$ defined in (9) is called the raising operator of the equation, and $a_-$ defined in (10) is called the lowering operator.
\end{mydef}

\begin{remark}
Notice that if the raising operator is iterated, the energy will be lower and lower, reaching a level $E<0$, which contradicts Definition 17. Therefore, a minimum energy level $E_0$ and $\Psi_0$ are required. Its existence and form are proved and stated in the following theorem.
\end{remark}

\begin{thm}
There exists $\Psi_0$ such that $a_-\Psi_0=0$ for a Schr\"{o}dinger equation and the lowering operator $a_-$. In this case, 
\begin{equation}
\Psi_0(x)=A_0e^{-\frac{m\omega}{2h}x^2},
\end{equation} where $A_0$ is some constant.
The corresponding energy is
\begin{equation}
E_0=\frac{1}{2}h\omega.
\end{equation}

\end{thm}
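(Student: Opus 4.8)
The plan is to treat the condition $a_-\Psi_0 = 0$ as a first-order ordinary differential equation, solve it directly by separation of variables, and then read off the energy from the factored form of the Schrödinger equation already established in (14).

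First I would expand the defining relation. Using the definition of $a_-$ in (10), the requirement $a_-\Psi_0 = 0$ reads
\begin{equation*}
\frac{1}{\sqrt{2m}}\left(\frac{h}{i}\frac{d\Psi_0}{dx} - im\omega x\,\Psi_0\right) = 0.
\end{equation*}
Multiplying through by $\frac{i}{h}$ and using $i^2 = -1$ to clear the factor of $i$, this simplifies to the separable equation
\begin{equation*}
\frac{d\Psi_0}{dx} = -\frac{m\omega}{h}\,x\,\Psi_0.
\end{equation*}

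Next I would solve this by separating variables and integrating: writing $\frac{d\Psi_0}{\Psi_0} = -\frac{m\omega}{h}\,x\,dx$ and integrating both sides yields $\ln\Psi_0 = -\frac{m\omega}{2h}x^2 + C$, so that $\Psi_0(x) = A_0\, e^{-\frac{m\omega}{2h}x^2}$ with $A_0 = e^C$ an arbitrary constant. This simultaneously establishes existence and the claimed closed form, the solution being determined uniquely up to the multiplicative constant $A_0$.

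Finally, to obtain the energy I would invoke equation (14), namely $(a_+a_- + \frac{1}{2}h\omega)\Psi = E\Psi$, which is an equivalent form of the original Schrödinger equation. Since $a_-\Psi_0 = 0$ immediately gives $a_+a_-\Psi_0 = 0$, the left-hand side collapses and the identity reduces to $\frac{1}{2}h\omega\,\Psi_0 = E\Psi_0$, forcing $E_0 = \frac{1}{2}h\omega$. I expect no serious obstacle here: the argument is an elementary separable-ODE computation followed by a substitution into the already-derived factored form. The only point demanding care is the bookkeeping of the imaginary unit when clearing $\frac{h}{i}$, so that the sign in the exponent of the Gaussian comes out negative, guaranteeing that $\Psi_0$ is the genuinely decaying ground state rather than a growing exponential.
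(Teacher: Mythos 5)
Your proposal is correct and follows essentially the same route as the paper: derive the first-order ODE $\frac{d\Psi_0}{dx}+\frac{m\omega}{h}x\Psi_0=0$ from $a_-\Psi_0=0$, solve it (the paper uses the integrating-factor formula where you separate variables, an immaterial difference), and then substitute into the factored form $(a_+a_-+\frac{1}{2}h\omega)\Psi=E\Psi$ to conclude $E_0=\frac{1}{2}h\omega$. No gaps.
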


\begin{proof}
The existence part is shown on Chapter 2, page 34 of \cite{Griffiths}. We will find the explicit expression of $\Psi_0(x)$.

Given $a_-\Psi_0=0$, we have 
\begin{equation*}
\frac{1}{\sqrt{2m}} \left(\frac{h}{i}\frac{d\Psi_0}{dx}-im\omega x\Psi_0 \right)=0.
\end{equation*}

After simplification, we have the following differential equation: 
\begin{equation*}
\frac{d\Psi_0}{dx}+\frac{m\omega}{h}x\Psi_0=0.
\end{equation*}

Using the formula for solving first-order differential equations, let 
$$I(x)=e^{\int \frac{m\omega}{h}x},\  Q(x)=0.$$ 
Then 
\begin{align*}
&\Psi_0(x)=\frac{1}{I(x)}\left[\int I(x)Q(x)dx+A_0\right]=
\\&\quad
\frac{1}{e^{\frac{m\omega x^2}{2h}}}[0+A_0]=A_0e^{-\frac{m\omega}{2h}x^2}.
\end{align*}

We replace the expression of $\Psi_0(x)$ into equation (16), and have 

\begin{align*}
a_+(a_-\Psi_0(x))+\frac{1}{2}h\omega\Psi_0(x)=E_0\Psi_0(x).
\end{align*}
Given $a_-\Psi_0=0$, we have $a_+(a_-\Psi_0(x))=0$, and therefore $E_0=\frac{1}{2}h\omega$. 
\end{proof}

It is easy to find explicit expressions of solutions to the Schr\"{o}dinger equation with higher energy using the raising operator $a_+$ based on $\Psi_0$ and $E_0$, and thus we have the following definition.

\begin{mydef}
Given $\Psi_0$ and $E_0$ for a time-independent Schr\"{o}dinger equation as in Theorem 18, we define $\Psi_n(x)=A_n(a_+)^ne^{-\frac{m\omega}{2h}x^2}$ as solutions to the equation with energy $E_n=(n+\frac{1}{2})h\omega$, which represents the $n^{th}$ excited state.
\end{mydef}

In the next section, we will explore a $p$-adic analog of the Schr\"{o}dinger equation inspired by the explicit expression of solutions, and raising and lowering operators we find in this section.

\subsection{A $p$-adic Schr\"{o}dinger equation}

We will prove Theorem 16 and 17 to prepare for exploring the $p$-adic analogs of the Schr\"{o}dinger equation since they are useful tools to simplify the calculation.
\begin{thm}
Fix $n\in \mathbb{N}$. Let 
\begin{equation}
f_n(x) = \begin{cases} |x|_p^n &\text{ if } x \in \mathbb{Z}_p  \\ 0 & \text{ otherwise} \end{cases}.
\end{equation}

Then,
\begin{equation}
D^{\alpha} f_n (x) = \begin{cases} \frac{\Gamma_p(n+1)}{\Gamma_p(n-1)}|x|_p^{n-\alpha}+\frac{1}{\Gamma_p(-\alpha)}\frac{p-1}{p-p^{-n+\alpha+1}} &\text{ if } x \in \mathbb{Z}_p  \\ \frac{1}{\Gamma_p(-\alpha)}\left| x\right|_p^{\shortminus(\alpha+1)}\frac{p-1}{p-p^{\shortminus n}} & \text{ otherwise} \end{cases}.
\end{equation}
\end{thm}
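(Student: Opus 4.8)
The plan is to insert $f_n$ into the integral representation $D^\alpha f(x)=\frac{1}{\Gamma_p(-\alpha)}\int_{\mathbb{Q}_p}\frac{f(y)-f(x)}{|x-y|_p^{\alpha+1}}\,dy$ of Definition 16 and split the argument along the two cases $x\in\mathbb{Z}_p$ and $x\notin\mathbb{Z}_p$ that define $f_n$. The only analytic input needed beyond the ultrametric identity $|x-y|_p=\max(|x|_p,|y|_p)$ for $|x|_p\ne|y|_p$ will be the sphere measure of Corollary 2 and the closed-form integrals of Corollaries 3 and 4; every integral reduces to a geometric series exactly as in the proof of Proposition 8.

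First I would handle the easy branch $x\notin\mathbb{Z}_p$, where $|x|_p>1$ and $f_n(x)=0$. The integrand is then $f_n(y)/|x-y|_p^{\alpha+1}$, which vanishes off $\mathbb{Z}_p$, so the integral collapses to one over $\mathbb{Z}_p$. Every $y\in\mathbb{Z}_p$ satisfies $|y|_p\le 1<|x|_p$, hence $|x-y|_p=|x|_p$ is constant on the domain of integration; pulling out this factor leaves $|x|_p^{-(\alpha+1)}\int_{\mathbb{Z}_p}|y|_p^n\,dy$, and Corollary 3 (applicable since $n>-1$) evaluates the integral to $\frac{p-1}{p-p^{-n}}$. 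This reproduces the second branch of the claim with no convergence issue.

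For $x\in\mathbb{Z}_p$ I would exploit the computation already carried out in Proposition 8. Here $f_n(x)=|x|_p^n$, so the $f_n$-integrand differs from the integrand of $D^\alpha|x|_p^n$ only on the region $|y|_p>1$, where $f_n(y)=0$ rather than $|y|_p^n$; after subtraction every term cancels except $-|y|_p^n/|x-y|_p^{\alpha+1}$ on that region. Since $|y|_p>1\ge|x|_p$ forces $|x-y|_p=|y|_p$, the correction is $-\int_{\mathbb{Q}_p\setminus\mathbb{Z}_p}|y|_p^{\,n-\alpha-1}\,dy$, so that
$$D^\alpha f_n(x)=\frac{\Gamma_p(n+1)}{\Gamma_p(n-\alpha+1)}|x|_p^{n-\alpha}-\frac{1}{\Gamma_p(-\alpha)}\int_{\mathbb{Q}_p\setminus\mathbb{Z}_p}|y|_p^{\,n-\alpha-1}\,dy,$$
the leading term being supplied verbatim by Proposition 8. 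Evaluating the correction integral with $s=n-\alpha-1$ gives $-\frac{p-1}{p-p^{-n+\alpha+1}}$, and the two minus signs combine into the asserted extra summand $\frac{1}{\Gamma_p(-\alpha)}\frac{p-1}{p-p^{-n+\alpha+1}}$ in the first branch.

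The principal obstacle is convergence in this last case: the integral $\int_{\mathbb{Q}_p\setminus\mathbb{Z}_p}|y|_p^{s}\,dy$ is literally given by Corollary 4 only when $s<-1$, i.e.\ when $n<\alpha$, and in any event $\alpha$ need not be an integer, so I am really invoking the underlying geometric-series identity and then analytically continuing in $\alpha$, precisely as Proposition 8 does for $D^\alpha|x|_p^n$. A second point I would check explicitly, to justify the clean cancellation above, is that the sphere $|y|_p=|x|_p$ contributes nothing: when $x\in\mathbb{Z}_p$ every $y$ with $|y|_p=|x|_p\le 1$ again lies in $\mathbb{Z}_p$, whence $f_n(y)-f_n(x)=|y|_p^n-|x|_p^n=0$ there.
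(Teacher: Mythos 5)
Your proof is correct, and for the branch $x\notin\mathbb{Z}_p$ it coincides with the paper's argument (collapse to $\mathbb{Z}_p$, pull out the constant $|x-y|_p=|x|_p$, apply Corollary 3). For $x\in\mathbb{Z}_p$ you take a genuinely different and tidier route: the paper recomputes the integral from scratch, decomposing $\mathbb{Q}_p$ into the spheres $|y|_p=p^k$ and summing the resulting two-sided geometric series directly, whereas you write $D^\alpha f_n(x)=D^\alpha|x|_p^n-\frac{1}{\Gamma_p(-\alpha)}\int_{\mathbb{Q}_p\setminus\mathbb{Z}_p}|y|_p^{\,n-\alpha-1}\,dy$, importing the leading term wholesale from Proposition 8 and reducing all new work to the single integral of Corollary 4. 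This buys two things: it makes the shape of the answer transparent (the unrestricted derivative plus a constant boundary correction coming from truncating $|x|_p^n$ to $\mathbb{Z}_p$), and it confines the analytic-continuation issue to one one-sided series, which you correctly flag, instead of re-running the continuation already done in Proposition 8. One discrepancy to record: your leading coefficient is $\Gamma_p(n+1)/\Gamma_p(n-\alpha+1)$ while the theorem as printed has $\Gamma_p(n+1)/\Gamma_p(n-1)$; these agree only at $\alpha=2$, the sole value used later in the paper, and your general form is the correct one, so this is a typo in the statement rather than a gap in your argument.
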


\begin{proof}

Let $x\in\mathbb{Z}_p$, and let $|x|_p=p^t$ for some integer $t\leq0$. We have

\begin{align*}
& D^\alpha f_n(x)=\frac{1}{\Gamma_p(-\alpha)}\int_{\mathbb{Q}_p}\frac{f_n(y)-f_n(x)}{\left| y-x\right|_p^{\alpha+1}}dy \\&\quad
=\frac{1}{\Gamma_p(-\alpha)} \left(\int_{|y|_p>|x|_p}\frac{|y|_p-|x|_p}{|y|_p^{\alpha+1}}+0+\int_{|y|_p<|x|_p}\frac{|y|_p-|x|_p}{|x|_p^{\alpha+1}} \right) \\&\quad
=\frac{1}{\Gamma_p(-\alpha)}\left(\sum_{k=t+1}^0 \frac{p^{kn}-p^{tn}}{p^{k(\alpha+1)}}(p^k-p^{k-1})+\sum_{k=-\infty}^{t-1} \frac{p^{kn}-p^{tn}}{p^{t(\alpha+1)}}(p^k-p^{k-1})\right) \\&\quad
=\frac{\Gamma_p(n+1)}{\Gamma_p(n-1)}|x|_p^{n-\alpha}+\frac{1}{\Gamma_p(-\alpha)}\frac{p-1}{p-p^{-n+\alpha+1}},
\end{align*}
after some calculation.

\vspace{0.4cm}

Let $x\in\mathbb{Q}_p\setminus\mathbb{Z}_p$. Then
\begin{align*}
&D^\alpha f_n(x)=\frac{1}{\Gamma_p(-\alpha)}\int_{\mathbb{Q}_p}\frac{f_n(y)-0}{\left| y-x\right|_p^{\alpha+1}}dy
=\frac{1}{\Gamma_p(-\alpha)} \left(\int_{\mathbb{Z}_p}\frac{\left| y\right|_p^n}{\left| x\right|_p^{\alpha+1}}+\int_{\mathbb{Q}_p\setminus\mathbb{Z}_p}\frac{0-0}{\left| y-x\right|_p^{\alpha+1}}\right)
\\&\quad
=\frac{1}{\Gamma_p(-\alpha)}\left| x\right|_p^{\shortminus(\alpha+1)}\frac{p-1}{p-p^{\shortminus n}}.
\end{align*}

\end{proof}

\begin{thm}
Fix $n\in\mathbb{N}$. Let 
\begin{equation}
g_n(x) = \begin{cases} 0 &\text{ if } x \in \mathbb{Z}_p  \\ |x|_p^n & \text{ otherwise} \end{cases}.
\end{equation}
Then \begin{equation}
D^{\alpha} g_n (x) = \begin{cases} \frac{1}{\Gamma_p(-\alpha)}(-\frac{p-1}{p-p^{\shortminus(n-\alpha-1)}}) &\text{ if } x \in \mathbb{Z}_p  

\\ \frac{\Gamma_p(n+1)}{\Gamma_p(n-1)}|x|_p^{n-\alpha}-\frac{1}{\Gamma_p(-\alpha)}\frac{p-1}{p-p^{-n}}|x|_p^{-\alpha-1} & \text{ otherwise} 

\end{cases}.
\end{equation}
\end{thm}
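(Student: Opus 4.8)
The plan is to avoid repeating the long sphere-by-sphere integral computation of Theorem 15 and instead exploit the linearity of $D^\alpha$ together with the two results already in hand. The crucial observation is that $f_n$ and $g_n$ are complementary weighted copies of the same function: for every $x \in \mathbb{Q}_p$,
\begin{equation*}
f_n(x) + g_n(x) = |x|_p^n,
\end{equation*}
since $f_n$ equals $|x|_p^n$ on $\mathbb{Z}_p$ and vanishes off it, while $g_n$ does exactly the reverse. Because the defining integral
\begin{equation*}
D^\alpha f(x) = \frac{1}{\Gamma_p(-\alpha)} \int_{\mathbb{Q}_p} \frac{f(y) - f(x)}{|y - x|_p^{\alpha+1}}\, dy
\end{equation*}
is linear in $f$, this immediately gives $D^\alpha g_n(x) = D^\alpha |x|_p^n - D^\alpha f_n(x)$, reducing everything to substitution.

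First I would invoke Proposition 4 to record $D^\alpha |x|_p^n = \frac{\Gamma_p(n+1)}{\Gamma_p(n-\alpha+1)} |x|_p^{n-\alpha}$, and then subtract the two branches of $D^\alpha f_n$ supplied by Theorem 15. For $x \in \mathbb{Z}_p$ the leading $|x|_p^{n-\alpha}$ terms carry the same coefficient $\frac{\Gamma_p(n+1)}{\Gamma_p(n-\alpha+1)}$ and cancel, leaving only $-\frac{1}{\Gamma_p(-\alpha)}\frac{p-1}{p - p^{-n+\alpha+1}}$; rewriting the exponent as $-n+\alpha+1 = -(n-\alpha-1)$ produces exactly the claimed first branch. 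For $x \notin \mathbb{Z}_p$ the function $f_n$ contributes only its boundary piece $\frac{1}{\Gamma_p(-\alpha)}|x|_p^{-(\alpha+1)}\frac{p-1}{p - p^{-n}}$, and subtracting it from $\frac{\Gamma_p(n+1)}{\Gamma_p(n-\alpha+1)}|x|_p^{n-\alpha}$ yields the claimed second branch. Each case is then a one-line algebraic check.

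For a self-contained derivation, one can instead mimic the proof of Theorem 15 directly: split $\int_{\mathbb{Q}_p}$ over the three regions $|y|_p > |x|_p$, $|y|_p = |x|_p$, and $|y|_p < |x|_p$, use the ultrametric identity $|y - x|_p = \max(|x|_p, |y|_p)$ on the outer two, and collapse each piece to a geometric series over spheres via Corollary 2. When $x \in \mathbb{Z}_p$ one has $g_n(x) = 0$ and only $y \notin \mathbb{Z}_p$ contributes, so the whole expression reduces to $\frac{1}{\Gamma_p(-\alpha)}\int_{\mathbb{Q}_p \setminus \mathbb{Z}_p} |y|_p^{n-\alpha-1}\, dy$, evaluated by Corollary 4 with $s = n - \alpha - 1$. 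I expect the main obstacle to be the same one flagged in Proposition 4: these integrals converge only on disjoint ranges of $(n,\alpha)$ and must be interpreted as analytic continuations. The linearity shortcut is therefore legitimate only after one checks that continuation commutes with subtraction — which it does, since the continued expressions agree with the convergent integrals on a common nonempty domain and analytic continuation is unique — so the delicacy lies entirely in this justification rather than in any of the surrounding algebra.
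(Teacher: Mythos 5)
Your argument is correct, but it takes a genuinely different route from the paper. The paper proves the identity from scratch: for $x \in \mathbb{Z}_p$ it uses $g_n(x)=0$ to reduce the defining integral to $\frac{1}{\Gamma_p(-\alpha)}\int_{\mathbb{Q}_p\setminus\mathbb{Z}_p}|y|_p^{\,n-\alpha-1}\,dy$ and evaluates it by Corollary 4, and for $x \notin \mathbb{Z}_p$ it splits $\mathbb{Q}_p$ into the regions $|y|_p>|x|_p$, $|y|_p=|x|_p$, $|y|_p<|x|_p$ with $y\in\mathbb{Z}_p$, and $|y|_p<|x|_p$ with $y\notin\mathbb{Z}_p$, summing the resulting geometric series sphere by sphere. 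Your observation that $f_n+g_n=|\cdot|_p^n$ pointwise, so that $D^\alpha g_n = D^\alpha|\cdot|_p^n - D^\alpha f_n$ by linearity of the defining integral, collapses all of that to two substitutions from Proposition 4 and Theorem 15, and your closing remark about why the subtraction is legitimate at the level of regularized (analytically continued) integrals is exactly the point that needs saying, since none of the three integrals converges on the full parameter range. What the shortcut buys is brevity and a structural explanation of why the ``boundary'' terms in Theorems 15 and 16 are negatives of one another; what it costs is independence, since any slip in Theorem 15 or in Proposition 4 (whose proof the paper only sketches, with the key step labelled ``after calculation'') propagates into your result. Your derivation also quietly exposes a small inconsistency in the paper: Theorems 15 and 16 write the leading coefficient as $\Gamma_p(n+1)/\Gamma_p(n-1)$, which agrees with Proposition 4's $\Gamma_p(n+1)/\Gamma_p(n-\alpha+1)$ only when $\alpha=2$; the exact cancellation you rely on in the $x\in\mathbb{Z}_p$ case requires the latter form, which is the correct one for general $\alpha$.
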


\begin{proof}

Let $x\in\mathbb{Z}_p$. We have

\begin{align*}
D^\alpha g_n(x) &=\frac{1}{\Gamma_p(-\alpha)}\int_{\mathbb{Q}_p}\frac{g_n(y)-g_n(x)}{\left| y-x\right|_p^{\alpha+1}}dy \\&\quad
=\frac{1}{\Gamma_p(-\alpha)}\int_{\mathbb{Q}_p}\frac{g_n(y)}{\left| y-x\right|_p^{\alpha+1}}dy \\&\quad
=\frac{1}{\Gamma_p(-\alpha)}\left(\int_{\mathbb{Z}_p} 0dy+\int_{\mathbb{Q}_p\setminus\mathbb{Z}_p}\frac{\left| y\right|_p^n}{\left| y\right|_p^{\alpha+1}}dy\right) \\&\quad
=\frac{1}{\Gamma_p(-\alpha)}\left(-\frac{p-1}{p-p^{\shortminus(n-\alpha-1)}}\right).
\end{align*}

Let $x\in\mathbb{Q}_p\setminus\mathbb{Z}_p$. We write $|x|_p=p^t$ for some integer $t>0$. Then

\begin{align*}
D^\alpha g_n(x)&=\frac{1}{\Gamma_p(-\alpha)}\int_{\mathbb{Q}_p}\frac{g_n(y)-\left| x\right|_p^n}{\left| y-x\right|_p^{\alpha+1}}dy \\&\quad
=\frac{1}{\Gamma_p(-\alpha)} \Big(\int_{|y|_p>|x|_p}\frac{|y|_p^n-|x|_p^n}{|y|_p^{\alpha+1}} dy+0+\int_{|y|_p<|x|_p,\ y\in\mathbb{Z}_p}\frac{-|x|_p^n}{|x|_p^{\alpha+1}} dy \\&\quad
+\int_{|y|_p<|x|_p,\  y\in\mathbb{Q}_p\setminus\mathbb{Z}_p} \frac{|y|_p^n-|x|_p^n}{|x|_p^{\alpha+1}} dy \Big) \\&\quad
=\frac{1}{\Gamma_p(-\alpha)}\Big(\sum_{k=t+1}^{\infty}\frac{p^{kn}-p^{tn}}{p^{(\alpha+1)k}}(p^k-p^{k-1})+
\\&\quad
\sum_{-\infty}^{k=0}\frac{-p^{tn}}{p^{(\alpha+1)t}}(p^k-p^{k-1})
+\sum_{k=1}^{k=t-1}\frac{p^{kn}-p^{tn}}{p^{(\alpha+1)t}}(p^k-p^{k-1})\Big) \\&\quad
=\frac{\Gamma_p(n+1)}{\Gamma_p(n-1)}|x|_p^{n-\alpha}-\frac{1}{\Gamma_p(-\alpha)}\frac{p-1}{p-p^{-n}}|x|_p^{-\alpha-1}.
\end{align*}

\end{proof}

In the next few examples, we will explore solutions to the $p$-adic analog of the time-independent Schr\"{o}dinger equation.

\begin{ex}
For the $p$-adic differential equation \begin{equation}
D^2\Psi(x)+B\left| x\right|_p^2\Psi(x)=E\Psi(x),
\end{equation}
we seek solutions of the form $$\Psi_0(x)=\sum_{n=0}^\infty b_{2n}\left| x\right|_p^{2n},$$ with $x\in \mathbb{Q}_p$. Here $D^2$ is the $p$-adic differential operator introduced in Section 4. 
 
 We want to determine coefficients $b_{2n},$ $B$ and $E$, which are assumed to be real numbers, that satisfy the equation and make the series $\sum_{n=0}^\infty b_{2n}\left| x\right|_p^{2n}$ converge.

Knowing $D^2\Psi_0(x)=b_{2n}\frac{\Gamma_p(2n+1)}{\Gamma_p(2n-1)}\left| x\right|_p^{2n-2}$, where $\Gamma_p(x)=\frac{1-p^{x-1}}{1-p^{\shortminus x}}$, substituting in the equation, we can write
\begin{equation}
\sum_{n=1}^\infty b_{2n}\frac{\Gamma_p(2n+1)}{\Gamma_p(2n-1)}\left| x\right|_p^{2n-2}+B\sum_{n=0}^\infty b_{2n}\left| x\right|_p^{2n+2}=E\sum_{n=0}^\infty b_{2n}\left| x\right|_p^{2n}.
\end{equation}

For the coefficient of $\left| x\right|_p^0$ we have
\begin{equation*}
b_2\frac{\Gamma_p(3)}{\Gamma_p(1)}=Eb_0.
\end{equation*}
 Since $\Gamma_p(1)=0$, we let $E=b_2=0$ to avoid the meaningless case that the denominator is 0. We assume $b_0=1$.

For the coefficient of $\left| x\right|_p^2$ we have
\begin{equation*}
b_4\frac{\Gamma_p(5)}{\Gamma_p(3)}+Bb_0=Eb_1=0.
\end{equation*}
Then, $b_4=-B\frac{\Gamma_p(3)}{\Gamma_p(5)}$.

For the coefficient of $\left| x\right|_p^4$ we have
\begin{equation*}
b_6\frac{\Gamma_p(7)}{\Gamma_p(5)}+Bb_2=0.
\end{equation*}
Then, $b_6=-\frac{Bb_2\Gamma_p(5)}{\Gamma_p(7)}=0$. For the same reason, $b_8=\frac{B^2\Gamma_p(3)\Gamma_p(7)}{\Gamma_p(5)\Gamma_p(9)}.$

More generally, if we assume $b_0=1$,
\begin{align}
&b_{4n}=\frac{(-B)^n\Gamma_p(4n-1)...\Gamma_p(3)}{\Gamma_p(4n+1)...\Gamma_p(5)}
\\&\quad
b_{4n+2}=0
\end{align}
for $n\in\mathbb{N}$.

Next, we check whether $\sum_{n=0}^\infty$ b$_{2n}$$\left| x\right|_p^{2n}$ converges in the real sense. Notice that 
\begin{equation*}
\sum_{n=0}^\infty b_{2n}\left| x\right|_p^{2n}=\sum_{n=1}^\infty (-B)^n\frac{\Gamma_p(4n-1)...\Gamma_p(3)}{\Gamma_p(4n+1)...\Gamma_p(5)}\left| x\right|_p^{4n}.
\end{equation*}
We apply the Ratio Test and find
\begin{align*}
& lim_{n\longrightarrow\infty}\left| \frac{b_{4n+4}\left| x\right|_p^{4n+4}}{b_{4n}\left| x\right|_p^{4n}}\right|=lim_{n\longrightarrow\infty}\left| \frac{B\Gamma_p(4n+3)\left| x\right|_p^4}{\Gamma_p(4n+5)}\right|
\\&\quad
=B\left| x\right|_p^{4n}lim_{n\longrightarrow\infty}\left| \frac{\Gamma_p(4n+3)}{\Gamma_p(4n+5)}\right|.
\end{align*}

We compute $lim_{x\rightarrow\infty}\left| \frac{\Gamma_p(x)}{\Gamma_p(x+1)}\right|$, then $lim_{n\rightarrow\infty} \left| \frac{\Gamma_p(n)}{\Gamma_p(n+2)}\right|$ to find
$lim_{n\rightarrow\infty}\left| \frac{\Gamma_p(4n+3)}{\Gamma_p(4n+5)}\right|$.
We have
\begin{align*}
& lim_{x\longrightarrow\infty}\left| \frac{\Gamma_p(x)}{\Gamma_p(x+1)}\right|=lim_{x\longrightarrow\infty}\left| \frac{(1-p^{x-1})(1-p^{-(x+1)})}{(1-p^{-x})(1-p^x)}\right|=
\\&\quad
lim_{x\longrightarrow\infty}\left| \frac{(\frac{1}{p}-p^{x-1}+1-\frac{1}{p})(\frac{1}{p}-p^{-(x+1)}+1-\frac{1}{p})}{(1-p^x)(1-p^{-x})}\right|=
\\&\quad
lim_{x\longrightarrow\infty}\left(\frac{1}{p}+\frac{1-\frac{1}{p}}{1-p^x}\right)\left(\frac{1}{p}+\frac{1-\frac{1}{p}}{1-p^{-x}}\right)
=\left(\frac{1}{p}+0 \right)\left(\frac{1}{p}+1-\frac{1}{p}\right)=\frac{1}{p}.
\end{align*}
Thus
\begin{equation*}
lim_{n\rightarrow\infty}\left| \frac{\Gamma_p(n)}{\Gamma_p(n+2)}\right|=lim_{n\longrightarrow\infty}\left| \frac{\Gamma_p(n)}{\Gamma_p(n+1)}\right|lim_{n\longrightarrow\infty}\left| \frac{\Gamma_p(n+1)}{\Gamma_p(n+2)}\right|=\frac{1}{p^2}.
\end{equation*}
Consequently, $(\left| \frac{\Gamma_p(4n+3)}{\Gamma_p(4n+5)}\right|)$ as a subsequence of $\left| \frac{\Gamma_p(n)}{\Gamma_p(n+2)}\right|$ also converges to $\frac{1}{p^2}$.

In conclusion, whenever 
\begin{equation*}
lim_{n\longrightarrow\infty}\left| \frac{b_{4n+4}\left| x\right|_p^{4n+4}}{b_{4n}\left| x\right|_p^{4n}}\right|=B\left| x \right|_p^4\frac{1}{p^2}<1,
\end{equation*}
$\sum_{n=0}^\infty b_{2n}\left| x\right|_p^{2n}$ passes the ratio test and converges. However, it does not converges everywhere, since $\left| x \right|_p$ can be infinitely large. 
\end{ex}

In the next example,  we construct a solution $\Psi_0$ whose series expansion is likely to converge for any $x\in\mathbb{Q}_p$.

\begin{ex}
We guess 
\begin{align}
\psi_0(x) &= \sum_{n=0}^{\infty} c_n f_n (x) + \sum_{n=1}^{\infty} k_n g_{-n}(x) \\
&= \begin{cases} \sum_{n=0}^{\infty} c_n |x|_p^n & \text{ if } x \in \mathbb{Z}_p \\ \sum_{n=1}^{\infty} k_n |x|_p^{-n} & \text{ otherwise}. \end{cases}
\end{align}
is a solution to the equation $D^2\Psi(x)+B\left| x\right|_p^2\Psi(x)=E\Psi(x)$, where $c_n,k_n$ are some real sequences.

Let $x\in\mathbb{Z}_p$. Then, 

\begin{equation}
\begin{split}
&D^2\Psi_0(x)=\sum_{n=0}^{\infty}c_n \left(\frac{\Gamma_p(n+1)}{\Gamma_p(n-1)}|x|_p^{n-2}+\frac{1}{\Gamma_p(-2)}\frac{p-1}{p-p^{-n+3}}\right)
\\
&+\sum_{n=1}^{\infty}k_n\frac{1}{\Gamma_p(-2)}\frac{1-p}{p-p^{n+3}}. 
\end{split}
\end{equation}

We replace expression (27) into the equation and have

\begin{equation}
\begin{split}
&\sum_{n=0}^{\infty}c_n \left(\frac{\Gamma_p(n+1)}{\Gamma_p(n-1)}|x|_p^{n-2}+\frac{1}{\Gamma_p(-2)}\frac{p-1}{p-p^{-n+3}}\right)
\\
&+\sum_{n=1}^{\infty}k_n\frac{1}{\Gamma_p(-2)}\frac{1-p}{p-p^{n+3}}+B\sum_{n=0}^{\infty}c_n|x|_p^{n+2}
=E\sum_{n=0}^{\infty}c_n|x|_p^n.
\end{split}
\end{equation}

For the coefficient of $|x|_p^{-2}$ we have 
\begin{equation*}
c_0\frac{\Gamma_p(1)}{\Gamma_p(-1)}=0.
\end{equation*}
Since $\Gamma_p(1)=0$, we do not have information about $c_0$.

For the coefficient of $|x|_p^{-1}$ we have
\begin{equation*}
c_1\frac{\Gamma_p(2)}{\Gamma_p(0)}=0.
\end{equation*}
We let $c_1=0$ since $\Gamma_p(0)$ is undefined.

For the coefficient of $|x|_p^0$ we have
\begin{equation*}
c_2\frac{\Gamma_p(3)}{\Gamma_p(1)}+\sum_{n=0}^{
\infty}c_n\frac{1}{\Gamma_p(-2)}\frac{p-1}{p-p^{-n+3}}+\sum_{n=1}^{\infty}k_n\frac{1}{\Gamma_p(-2)}\frac{1-p}{p-p^{n+3}}=Ec_0.
\end{equation*}
Since $\Gamma_p(1)=0$, we set $c_2=0$. Then,
\begin{equation}
\sum_{n=0}^{\infty}c_n\frac{1}{\Gamma_p(-2)}\frac{p-1}{p-p^{-n+3}}+\sum_{n=1}^{\infty}k_n\frac{1}{\Gamma_p(-2)}\frac{1-p}{p-p^{n+3}}=Ec_0.
\end{equation}

For the coefficient of $|x|_p^1$ we have 
\begin{equation*}
c_3\frac{\Gamma_p(4)}{\Gamma_p(2)}=Ec_1.
\end{equation*}
Thus, $c_3=0$.

For the coefficient of $|x|_p^2$ we have
\begin{align*}
& c_4\frac{\Gamma_p(5)}{\Gamma_p(3)}+Bc_0=Ec_2  
\\&\quad c_4=\frac{-Bc_0\Gamma_p(3)}{\Gamma_p(5)}.
\end{align*}

For the coefficient of $|x|_p^3$ we have
\begin{align*}
& c_5\frac{\Gamma_p(6)}{\Gamma_p(4)}=Ec_3 \\&\quad
c_5=0.
\end{align*}

Generally, 
\begin{align}
& c_{2n+1}=0,
\\&\quad
c_{2n+4}\frac{\Gamma_p(2n+5)}{\Gamma_p(2n+3)}+Bc_{2n}=Ec_{2n+2}
\end{align}
for any $n\in\mathbb{N}$.

We now explore the case when $x\in\mathbb{Q}_p\setminus\mathbb{Z}_p$. We expect to find equations like (29), (30), (31). With these six equations together, we can prove the convergence of $\Psi_0$ later.

Let $x\in\mathbb{Q}_p\setminus{\mathbb{Z}_p}$. Then,

\begin{align*}
D^2\Psi_0(x)=\sum_{k=1}^{\infty} k_n \left(\frac{\Gamma_p(-n+1)}{\Gamma_p(-n-1)}|x|_p^{-n-2}-\frac{1}{\Gamma_p(-2)}\frac{p-1}{p-p^n}|x|_p^{-3}\right).
\end{align*}
We replace this expression into the equation and get
\begin{align*}
\sum_{n=1}^{\infty} & k_n \left(\frac{\Gamma_p(-n+1)}{\Gamma_p(-n-1)}|x|_p^{-n-2}-\frac{1}{\Gamma_p(-2)}\frac{p-1}{p-p^n}|x|_p^{-3}\right)
\\&\quad    
+\sum_{n=0}^{\infty}c_n\frac{1}{\Gamma_p(-2)}|x|_p^{-3}\frac{p-1}{p-p^{-n}}+B\sum_{n=1}^{\infty}k_n|x|_p^{-n+2}=E\sum_{n=1}^{\infty}k_n|x|_p^{-n}.
\end{align*}

For the coefficient of $|x|_p^{-3}$ we have
\begin{equation}
-\sum_{n=1}^{\infty}k_n\frac{1}{\Gamma_p(-2)}\frac{p-1}{p-p^n}+\sum_{n=0}^{\infty}c_n\frac{1}{\Gamma_p(-2)}\frac{p-1}{p-p^{-n}}=-Bk_5.
\end{equation}

Equations (29) and (32) are two equations that can help us determine convergence of $\Psi_0$ and the value of $E$ later. In addition, we need to find the general expression for $k_n$ like in equation (30) and (31).

For the coefficient of $|x|_p^1$ we have
\begin{align*}
 &Bk_1=0  \\&\quad  k_1=0.
\end{align*}

For the coefficient of $|x|_p^0$ we have

\begin{align*}
&Bk_2=0   \\&\quad   k_2=0.
\end{align*}

For the coefficient of $|x|_p^{-1}$ we have
\begin{align*}
&Bk_3=Ek_1  \\&\quad   k_3=0.
\end{align*}

For the coefficient of $|x|_p^{-2}$ we have
\begin{align*}
&Bk_4=Ek_2  \\&\quad  k_4=0. 
\end{align*}

For the coefficient of $|x|_p^{-4}$ we have
\begin{align*}
&k_2\frac{\Gamma_p(-1)}{\Gamma_p(-3)}+Bk_6=Ek_4
\\&\quad  k_6=0.
\end{align*}

For the coefficient of $|x|_p^{-5}$ we have
\begin{align*}
&k_3\frac{\Gamma_p(-2)}{\Gamma_p(-4)}+Bk_7=Ek_5
\\&\quad k_7=\frac{Ek_5}{B}.
\end{align*}

More generally,
\begin{align}
& k_{2n}=0,
\\
&k_{2n+1}\frac{\Gamma_p(-2n)}{\Gamma_p(-2n-2)}+Bk_{2n+5}=Ek_{2n+3}
\end{align}
for any positive integer $n$.

Given (30), (31), (33), (34), we know all coefficients $c_n$, $k_n$ in terms of $c_0$, $k_5$. Thus, we make the following definition.
\begin{mydef}
Define $\tau$ and $s$ as the sequences satisfying \begin{align}
&c_{2n}=c_0\tau_{2n},
\\&\quad
k_{2n+1}=k_5s_{2n+1} 
\end{align}
for $n\in\mathbb{N}\cup\{0\}$, where $c_n,k_n$ are the ones defined by (30),(31),(33),(34).
\end{mydef}

The following lemma, approximating the explicit expression of $\tau_{2n}$ and $s_{2n+1}$ for large primes $p$ and $B=1$, helps prove the convergence of the power series $\Psi_0$ (so that it becomes a solution to the $p$-adic Schrodinger Equation), the main goal of Example 4.
\begin{lem}
Given $lim_{n\rightarrow\infty}\frac{\Gamma_p(2n+3)}{\Gamma_p(2n+5)}=\frac{1}{p^2}$, assuming $p$ is large enough,
$B=1$, 
then 
\begin{equation}
\tau_{4n}=(-1)^n\frac{1}{p^{2n}}+O \left( \frac{1}{p^{2n+2}} \right) 
\end{equation}
for $n\geq1$, 
\begin{equation}
\tau_{4n+2}=(-1)^n\frac{nE}{p^{2n+2}}+O \left(\frac{1}{p^{2n+4}}\right)
\end{equation}
for $n\geq2$,
\begin{equation}
s_{4n+1}=(-1)^{n+1}p^{2n-2}+O(p^{2n-4})
\end{equation}
\begin{equation}
s_{4n+3}=(-1)^{n+1}nEp^{2n-2}+O(^{2n-4})
\end{equation}
 for $n\geq2$, where O represent the error terms.
\end{lem}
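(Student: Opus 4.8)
The plan is to recognize that all four asymptotic formulas are governed by two scalar two-term recurrences obtained from the already-derived relations (31) and (34). Combining (31) with the definition $c_{2n}=c_0\tau_{2n}$ in (35), setting $B=1$ and dividing by $c_0$, gives
\[
\tau_{2n+4} = \frac{\Gamma_p(2n+3)}{\Gamma_p(2n+5)}\bigl(E\,\tau_{2n+2}-\tau_{2n}\bigr),
\]
with initial data $\tau_0=1$ and $\tau_2=0$ (since $c_2=0$); likewise (34) with $k_{2n+1}=k_5 s_{2n+1}$ from (36), after dividing by $k_5$, gives
\[
s_{2n+5} = E\,s_{2n+3} - \frac{\Gamma_p(-2n)}{\Gamma_p(-2n-2)}\,s_{2n+1},
\]
with $s_1=s_3=0$, $s_5=1$, $s_7=E$. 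First I would reduce the lemma to the asymptotic analysis (as $p\to\infty$) of these two recurrences, treating $E$ as a fixed constant.

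The next step is to upgrade the supplied limit $\lim_{n\to\infty}\Gamma_p(2n+3)/\Gamma_p(2n+5)=p^{-2}$ to an explicit rate, which is elementary from the closed form $\Gamma_p(x)=(1-p^{x-1})/(1-p^{-x})$: factoring the dominant power of $p$ out of each factor yields
\[
\frac{\Gamma_p(2n+3)}{\Gamma_p(2n+5)}=\frac{1}{p^{2}}\bigl(1+O(p^{-2n-2})\bigr),\qquad \frac{\Gamma_p(-2n)}{\Gamma_p(-2n-2)}=p^{2}\bigl(1+O(p^{-2n})\bigr).
\]
Thus to leading order the $\tau$-recursion reads $\tau_{2n+4}\approx p^{-2}(E\tau_{2n+2}-\tau_{2n})$ and the $s$-recursion reads $s_{2n+5}\approx E s_{2n+3}-p^{2} s_{2n+1}$; the coefficients $p^{\mp 2}$ are exactly what produce the geometric decay (resp. growth) recorded in the four formulas.

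I would then prove all four formulas by a single simultaneous induction on $n$, splitting each recurrence by residue mod $4$ so that $\tau_{4n}$ and $\tau_{4n+2}$ (and $s_{4n+1}$ and $s_{4n+3}$) advance together. The structural observation I would isolate first is that in the combination $E\tau_{2n+2}-\tau_{2n}$ the two terms are of different order for the $\tau_{4n}$-step, where the $-\tau_{4n-4}$ term alone dominates and gives the clean leading term $(-1)^n p^{-2n}$, but are of equal order $p^{-2n}$ for the $\tau_{4n+2}$-step, where they add and the inductive leading coefficients combine as $1+(n-1)=n$, producing the factor $nE$. The $s$-formulas are entirely dual: the $-p^{2}s_{4n-3}$ term dominates the $s_{4n+1}$-step, while the $s_{4n+3}$-step again sees two equal-order terms combining to $n$. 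After checking the low-order cases $\tau_4,\tau_6,\tau_8,\tau_{10}$ and $s_5,s_7,s_9,s_{11}$ directly against the formulas as base cases, the inductive step amounts to substituting the hypotheses into the two displayed recurrences and multiplying by the Gamma-ratio expansions.

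I expect the main obstacle to be controlling the error terms through the induction rather than any single computation. One must verify that the $O$-terms propagate at exactly the claimed order: the relative $O(p^{-2n-2})$ correction in the Gamma-ratio and the subleading pieces of the inductive hypotheses must never contaminate the leading power, and the two coupled sequences must be advanced with correctly aligned indices (for instance $\tau_{4n+2}$ requires $\tau_{4n}$ already known at the same level $n$, whereas $\tau_{4n}$ draws on both hypotheses at level $n-1$). The equal-order combination in the $(4n+2)$- and $(4n+3)$-steps is the delicate place: there a cancellation-free sum of two quantities of order $p^{-2n}$ (resp. $p^{2n-2}$) must be tracked one order further to confirm the stated error, and it is precisely in this addition that the combinatorial factor $n$ emerges.
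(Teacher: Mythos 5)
Your proposal follows essentially the same route as the paper's proof: reduce (31) and (34) to the two-term recurrences $\tau_{2n+4}p^2+\tau_{2n}=E\tau_{2n+2}$ and $s_{2n+1}p^2+s_{2n+5}=Es_{2n+3}$ via the approximations $\Gamma_p(2n+3)/\Gamma_p(2n+5)\approx p^{-2}$ and $\Gamma_p(-2n)/\Gamma_p(-2n-2)\approx p^2$, verify the low-order cases explicitly, and then advance $\tau_{4n}$ and $\tau_{4n+2}$ (resp.\ $s_{4n+1}$ and $s_{4n+3}$) together by induction, with the factor $n$ arising exactly where you say. Your explicit tracking of the relative $O(p^{-2n-2})$ error in the Gamma-ratio is a slight refinement over the paper, which simply substitutes $p^{\mp 2}$ outright, but the argument is the same.
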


\begin{proof}

We claim that  $\frac{\Gamma_p(2n+3)}{\Gamma_p(2n+5)}\approx\frac{1}{p^2}$ for any $n\in\{0\}\cup\mathbb{N}$, $\frac{\Gamma_p(-2n)}{\Gamma_p(-2n-2)}\approx p^2$ for any $n\in\mathbb{N}$, given sufficiently large prime number $p$.

It is easy to see this claim is true, since all terms are negligible except the ones with the largest powers in the denominator and numerator as $p$ increases. Therefore, for a fixed $n$ we have

\begin{equation*}
\frac{\Gamma_p(2n+3)}{\Gamma_p(2n+5)}=\frac{1-p^{-2n-5}-p^{2n+2}+p^{-3}}{1-p^{2n+4}-p^{-2n-3}+p}\approx \frac{-p^{2n+2}}{-p^{2n+4}}=\frac{1}{p^2},
\end{equation*}

\begin{equation*}
\frac{\Gamma_p(-2n)}{\Gamma_p(-2n-2)}=\frac{1-p^{2n+1}-p^{-2n}+p}{1-p^{-2n-2}-p^{2n-1}+p^{-3}}\approx\frac{-p^{2n+1}}{-p^{2n-1}}=p^2.
\end{equation*}

Given this claim, for $B=1$, $c_{2n}=\tau_{2n}c_0$ and $k_{2n+1}=k_5s_{2n+1}$, we choose a sufficiently large prime number $p$ and rewrite equations (31) and (34) as the following:
\begin{equation}
\tau_{2n+4}p^2+\tau_{2n}=E\tau_{2n+2}
\end{equation}
\begin{equation}
s_{2n+1}p^2+s_{2n+5}=Es_{2n+3}.
\end{equation}
By come calculation using Lemma 2 above,

\begin{align*}
& \tau_4=\frac{1}{p^2}(-1),
\\&\quad
\tau_6=-E\frac{1}{p^4}, 
\\&\quad
\tau_8=\frac{1}{p^4}-E^2\frac{1}{p^6},
\\&\quad
\tau_{10}=2E\frac{1}{p^6}-E^3\frac{1}{p^8},
\\&\quad
\tau_{12}=\frac{E}{p^6}+O \left(\frac{1}{p^8}\right)
\\&\quad
\tau_{14}=-3E\frac{1}{p^8}+O \left(\frac{1}{p^{10}}\right)
\end{align*}

We find a pattern that matches $\tau_{4n}=(-1)^n\frac{1}{p^{2n}}+O \left( \frac{1}{p^{2n+2}} \right)$ for $n\geq1$, $\tau_{4n+2}=(-1)^n\frac{nE}{p^{2n+2}}+O(\frac{1}{p^{2n+4}})$ for $n\geq2$. We prove by induction that the pattern holds true for all $n\geq2, n\in\mathbb{N}$.

Assume
\begin{equation*}
\tau_{4n}=(-1)^n\frac{1}{p^{2n}}+O \left( \frac{1}{p^{2n+2}} \right), \tau_{4n+2}=(-1)^n\frac{nE}{p^{2n+2}}+O(\frac{1}{p^{2n+4}}).
\end{equation*}
We claim 
\begin{align*}
&\tau_{4n+4}=(-1)^{n+1}\frac{1}{p^{2n+2}}+O \left( \frac{1}{p^{2n+4}} \right), 
\\&\quad
\tau_{4n+6}=(-1)^{n+1}\frac{(n+1)E}{p^{2n+4}}+O \left(\frac{1}{p^{2n+6}}\right).
\end{align*}
Using equation (41), 
\begin{align*}
&\tau_{4n+4}p^2+\tau_{4n}=E\tau_{4n+2}
\\&\quad
\tau_{4n+4}=\frac{E((-1)^n\frac{nE}{p^{2n+2}}+O(\frac{1}{p^{2n+4}}))-((-1)^n\frac{1}{p^{2n}}+O \left( \frac{1}{p^{2n+2}} \right))}{p^2}
\\&\quad
=\frac{-(-1)^n\frac{1}{p^{2n}}+O(\frac{1}{p^{2n+2}})}{p^2}=(-1)^{n+1}\frac{1}{p^{2n+2}}+O \left( \frac{1}{p^{2n+4}} \right)
\end{align*}
Similarly,
\begin{align*}
&\tau_{4n+6}p^2+\tau_{4n+2}=E\tau_{4n+4}
\\&\quad
\tau_{4n+6}=\frac{E((-1)^{n+1}\frac{1}{p^{2n+2}}+O \left( \frac{1}{p^{2n+4}} \right))-(-1)^n\frac{nE}{p^{2n+2}}-O(\frac{1}{p^{2n+4}})}{p^2}
\\&\quad
=(-1)^{n+1}\frac{(n+1)E}{p^{2n+4}}+O \left(\frac{1}{p^{2n+6}}\right)
\end{align*}
The proof for $s_{4n+1}, s_{4n+3}$ uses the exactly same inductive method, and is thus omitted.
\end{proof}

Recall that our final goal is to prove $\begin{cases} \sum_{n=0}^{\infty} c_n |x|_p^n & \text{ if } x \in \mathbb{Z}_p \\ \sum_{n=1}^{\infty} k_n |x|_p^{-n} & \text{ otherwise} \end{cases}$ is convergent so that $\Psi_0(x)$ is a solution to the function D$^2$$\Psi(x)$+B$\left| x\right|_p^2$$\Psi(x)$=E$\Psi(x)$. As we find out in the following theorem, the statement is true.

\begin{thm}
\begin{align*}
\psi_0(x) &= \sum_{n=0}^{\infty} c_n f_n (x) + \sum_{n=1}^{\infty} k_n g_{-n}(x) \\
&= \begin{cases} \sum_{n=0}^{\infty} c_n |x|_p^n & \text{ if } x \in \mathbb{Z}_p \\ \sum_{n=1}^{\infty} k_n |x|_p^{-n} & \text{ otherwise}. \end{cases}
\end{align*}
is a solution to the function $D^2\Psi(x)+B\left| x\right|_p^2\Psi(x)=E\Psi(x)$, where $c_n,k_n$ are defined by (30), (31), (33), (34), $B=1$, E is some real constant determined by B.
\end{thm}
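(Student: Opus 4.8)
The coefficients $c_n$ and $k_n$ were constructed in Example 4 for exactly this purpose: setting the coefficient of each power $|x|_p^m$ in $D^2\psi_0+B|x|_p^2\psi_0-E\psi_0$ equal to zero, on $\mathbb{Z}_p$ and on $\mathbb{Q}_p\setminus\mathbb{Z}_p$ separately, is precisely what produced the recursions (30), (31), (33), (34) together with the two ``constant'' relations (29) and (32). The plan is therefore to treat this as a convergence-and-justification statement rather than a fresh computation. I would reduce the theorem to three claims: (i) the series $\sum_n c_n|x|_p^n$ converges for every $x\in\mathbb{Z}_p$ and $\sum_n k_n|x|_p^{-n}$ converges for every $x\in\mathbb{Q}_p\setminus\mathbb{Z}_p$; (ii) the operator $D^2$ may be applied term by term, so that $D^2\psi_0=\sum_n c_n D^2 f_n+\sum_n k_n D^2 g_{-n}$ with $D^2 f_n$ and $D^2 g_{-n}$ given by Theorems 16 and 17; and (iii) the leftover relations (29) and (32) hold for a suitable real $E$. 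Granting these, regrouping by powers of $|x|_p$ and invoking the recursions makes every coefficient vanish, so $\psi_0$ solves the equation.

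First I would extract coefficient bounds from Lemma 2. Note that $|x|_p$ is a real power of $p$ and the $c_n,k_n$ are real, so $\sum_n c_n|x|_p^n$ and $\sum_n k_n|x|_p^{-n}$ are ordinary real series. Since $c_{2n}=c_0\tau_{2n}$ and $k_{2n+1}=k_5 s_{2n+1}$, the estimates $\tau_{4n}=(-1)^n p^{-2n}+O(p^{-2n-2})$ and $\tau_{4n+2}=(-1)^n nE\,p^{-2n-2}+O(p^{-2n-4})$ yield a bound $|c_j|\le C_1\,j\,p^{-j/2}$, while $s_{4n+1}=(-1)^{n+1}p^{2n-2}+O(p^{2n-4})$ and its companion for $s_{4n+3}$ give $|k_j|\le C_2\,j\,p^{j/2}$, for constants $C_1,C_2$ depending on $p$ and $E$. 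On $\mathbb{Z}_p$ one has $|x|_p\le1$, so
\begin{equation*}
\sum_{n}|c_n|\,|x|_p^{n}\le C_1\sum_{n} n\,p^{-n/2}<\infty,
\end{equation*}
while on $\mathbb{Q}_p\setminus\mathbb{Z}_p$ one has $|x|_p\ge p$, hence $|x|_p^{-n}\le p^{-n}$ and
\begin{equation*}
\sum_{n}|k_n|\,|x|_p^{-n}\le C_2\sum_{n} n\,p^{n/2}\,p^{-n}=C_2\sum_{n} n\,p^{-n/2}<\infty.
\end{equation*}
Both are geometric-times-polynomial series in the ratio $p^{-1/2}<1$, so they converge absolutely, and the convergence is uniform on each region. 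The same estimates show the four cross-sums appearing in (29) and (32) converge, since their weights are bounded on the $c$-side and decay like $p^{-n}$ on the $k$-side. This settles (i).

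For (ii) I would work from the integral definition. For the partial sum $S_N$ one has
\begin{equation*}
D^2 S_N(x)=\frac{1}{\Gamma_p(-2)}\int_{\mathbb{Q}_p}\frac{S_N(y)-S_N(x)}{|x-y|_p^{3}}\,dy=\sum_{n\le N}c_n D^2 f_n(x)+\sum_{n\le N}k_n D^2 g_{-n}(x),
\end{equation*}
by linearity and Theorems 16 and 17, and it remains to pass to the limit $N\to\infty$ under the integral sign. The uniform geometric decay from (i), together with the fact that $D^2 f_n$ and $D^2 g_{-n}$ obey the same type of bound, should supply a dominating function and justify the interchange by dominated convergence. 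Substituting the explicit formulas of Theorems 16 and 17 and collecting powers of $|x|_p$ then kills the coefficient of every nonconstant power by (30), (31), (33), (34); the surviving constant terms, namely the coefficient of $|x|_p^0$ on $\mathbb{Z}_p$ and of $|x|_p^{-3}$ off $\mathbb{Z}_p$, are exactly (29) and (32). Writing $c_n=c_0\tau_n$ and $k_n=k_5 s_n$ turns these into two homogeneous linear relations in $(c_0,k_5)$ whose coefficients depend on $E$; demanding a nontrivial solution is a single scalar condition that fixes the admissible real $E$ in terms of $B=1$, which is (iii).

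I expect the genuine obstacle to be step (ii). Unlike an ordinary second derivative, $D^2$ is a nonlocal singular-integral operator defined by an integral over all of $\mathbb{Q}_p$, so commuting it with an infinite sum is not automatic: one must produce an integrable majorant for $(S_N(y)-S_N(x))/|x-y|_p^{3}$ that is uniform in $N$ and controls both the singularity at $y=x$ and the tail as $|y|_p\to\infty$. The uniform estimates from Lemma 2 are exactly what make such a majorant available, but converting the formal term-by-term differentiation of Example 4 into an honest dominated-convergence argument is the delicate part of the proof.
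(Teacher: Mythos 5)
Your convergence argument (step (i)) is essentially the paper's entire proof of this theorem: the paper likewise takes the asymptotics of Lemma 2, bounds each term by twice the leading term, and compares $\sum c_{4n}|x|_p^{4n}$, $\sum c_{4n+2}|x|_p^{4n+2}$ against geometric-type series on $\mathbb{Z}_p$, and similarly for $\sum k_n|x|_p^{-n}$ using $|x|_p\geq p$ off $\mathbb{Z}_p$. Where you genuinely diverge is that you refuse to stop there. The paper's proof establishes only that the two series converge and then declares $\Psi_0$ a solution, implicitly treating the formal coefficient-matching of Example 4 as a complete verification; your step (ii) --- justifying $D^2\Psi_0=\sum_n c_nD^2f_n+\sum_n k_nD^2g_{-n}$ by producing an integrable majorant for $(S_N(y)-S_N(x))/|x-y|_p^3$ uniform in $N$ --- is exactly the point the paper never addresses, and you are right that for a nonlocal singular-integral operator this interchange is not automatic. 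You leave that step as a sketch (``should supply a dominating function''), so your write-up is also not a complete proof, but it correctly locates the real gap rather than papering over it. Your step (iii) is likewise sound but is handled by the paper outside the proof: the constant-term relations (29) and (32) are recast as the homogeneous linear system $c_0A-k_5F=0$, $c_0C-k_5D=0$, and the nontriviality condition $AD=FC$ is what pins down $E$ asymptotically in Corollary 5. In short: same convergence mechanism, but you add a necessary analytic justification the paper omits and fold in the eigenvalue condition that the paper defers; completing your dominated-convergence argument would yield a strictly stronger result than what the paper actually proves.
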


\begin{proof}
Fix $x\in\mathbb{Z}_p$ so that $|x|_p\leq1$. Then,

\begin{align*}
&\sum_{n=0}^{\infty} c_n |x|_p^n=c_0|x|_p^0+\sum_{n=1}^\infty \left((-1)^n\frac{1}{p^{2n}}+O\left(\frac{1}{p^{2n+2}}\right)\right)|x|_p^{4n}
\\&\quad
+c_6|x|_p^6+\sum_{n=2}^\infty \left((-1)^n\frac{nE}{p^{2n+2}}+O\left(\frac{1}{p^{2n+4}}\right)\right)|x|_p^{4n+2}.
\end{align*}

Since the error term O has smaller absolute value than the approximation, we have 
\begin{equation*}
\Big|\left((-1)^n\frac{1}{p^{2n}}+O\left(\frac{1}{p^{2n+2}}\right)\right)|x|_p^{4n}\Big|<\frac{2}{p^{2n}}|x|_p^{4n}. 
\end{equation*}

Since the series made by the latter term is convergent by the Ratio Test, 
\begin{equation*}
\sum_{n=1}^{\infty} \left((-1)^n\frac{1}{p^{2n}}+O\left(\frac{1}{p^{2n+2}}\right)\right)|x|_p^{4n} 
\end{equation*}
is absolutely convergent by the Comparison Test, and therefore convergent.

Similarly, 
\begin{equation*}
\sum_{n=2}^\infty \left((-1)^n\frac{nE}{p^{2n+2}}+O\left(\frac{1}{p^{2n+4}}\right)\right)|x|_p^{4n+2}
\end{equation*}
is convergent by comparing it with $\sum_{n=2}^\infty \frac{2nE}{p^{2n+2}}|x|_p^{4n+2}.$ We conclude that 
\begin{equation*}
\sum_{n=0}^{\infty} c_n |x|_p^n
\end{equation*}
is convergent for all $x\in \mathbb{Z}_p$.

Now, we claim 
\begin{equation*}
\sum_{n=1}^{\infty} k_n |x|_p^{-n}
\end{equation*}
is convergent for any $x\in\mathbb{Q}_p\setminus{\mathbb{Z}_p}$. Fix $x\in\mathbb{Q}_p\setminus{\mathbb{Z}_p}$ so that $|x|_p\geq p$. Then,

\begin{align*}
&\sum_{n=1}^{\infty} k_n |x|_p^{-n}=\sum_{n=1}^{6} k_n |x|_p^{-n}+\sum_{n=2}^\infty ((-1)^{n+1}p^{2n+2}+O(p^{2n-4}))|x|_p^{-4n-1}
\\&\quad
+\sum_{n=2}^\infty ((-1)^{n+1}nEp^{2n+2}+O(p^{2n-4}))|x|_p^{-4n-3}.
\end{align*}

Again, using the condition that the error term O has smaller absolute value than the main term and $|x|_p\geq p$, we have 
\begin{equation*}
((-1)^{n+1}p^{2n+2}+O(p^{2n-4}))|x|_p^{-4n-1}<\frac{2p^{2n+2}}{p^{4n+1}}=2p^{-2n+1}.
\end{equation*}
 Obviously, $\sum_{n=2}^\infty 2p^{-2n+1}$ converges by the Geometric Series Test.

Thus, 
\begin{equation*}
\sum_{n=2}^\infty ((-1)^{n+1}p^{2n+2}+O(p^{2n-4}))|x|_p^{-4n-1}\end{equation*} 
is convergent by the Comparison Test.

Similarly, 
\begin{equation*}
\sum_{n=2}^\infty ((-1)^{n+1}nEp^{2n+2}+O(p^{2n-4}))|x|_p^{-4n-3}
\end{equation*}
is convergent by comparing it with 
\begin{equation*}
\sum_{n=2}^\infty 2nEp^{2n+2}p^{-4n-3}=\sum_{n=2}^\infty 2nEp^{-2n-1}, 
\end{equation*}
of which the ratio $|\frac{n+1}{n}p^{-2}|$ converges to $p^{-2}<1$ as $n\rightarrow\infty$.

Thus, 
\begin{equation*}
\sum_{n=1}^\infty k_n|x|_p^{-n} 
\end{equation*}
converges.
\end{proof}

In the case $B=1$, the above is enough to prove $\Psi_0(x)$ is a solution to our equation, we still wish to approximate the value of $E$ asymptotically for large prime numbers $p$. Before that, we derive an important equation that would help us estimate $E$ based on $B$.

 We rewrite the condition on $E$ as two equations using $\tau_{n}$ and $s_{n}$:
\begin{align}
& c_0(\sum_{n=0}^{\infty}\frac{\tau_{2n}}{p-p^{-2n+3}}-\frac{E\Gamma_p(-2)}{p-1})=k_5\sum_{n=2}^\infty\frac{s_{2n+1}}{p-p^{2n+4}}  \\&\quad
c_0\sum_{n=0}^\infty\frac{\tau_{2n}}{p-p^{-2n}}=k_5(\sum_{n=2}^\infty\frac{s_{2n+1}}{p-p^{2n+1}}-B\frac{\Gamma_p(-2)}{p-1}).
\end{align}

Denote 
\begin{align*}
A &=\sum_{n=0}^\infty\frac{\tau_{2n}}{p-p^{-2n}} \\
F &=\sum_{n=2}^\infty\frac{s_{2n+1}}{p-p^{2n+1}}-B\frac{\Gamma_p(-2)}{p-1} \\
C &=\sum_{n=0}^\infty\frac{\tau_{2n}}{p-p^{-2n+3}}-\frac{E\Gamma_p(-2)}{p-1} \\
D &=\sum_{n=2}^\infty\frac{s_{2n+1}}{p-p^{2n+4}}.
\end{align*}
Then, we have two linear equations in $c_0$ and $k_5$, 
\begin{align*}
& c_0A-k_5F=0   \\&\quad  c_0C-k_5D=0
\end{align*}

We do not wish to have trivial solutions to the system of equations, since it is meaningless for exploring the $p$-adic analog of the solution. Thus, in order to let the two equations have a non-zero solution, we need $det
\begin{pmatrix}
A&-F\\C&-D
\end{pmatrix}=0$. Thus, $$AD=FC.$$
Consequently, we find
\begin{equation}
\begin{split}
&\left(\sum_{n=0}^\infty \frac{\tau_{2n}} {p-p^{-2n}} \right) \left(\sum_{n=2}^\infty\frac{s_{2n+1}}{p-p^{2n+4}} \right)= 
\\
&( \sum_{n=2}^\infty\frac{s_{2n+1}}{p-p^{2n+1}}-B\frac{\Gamma_p(-2)}{p-1})(\sum_{n=0}^\infty\frac{\tau_{2n}}{p-p^{-2n+3}}-\frac{E\Gamma_p(-2)}{p-1})
\end{split}
\end{equation}

\begin{cor}
Given $B=1$ and the conditions of Theorem 18, $$E=2-\frac{2}{p}+O \left(\frac{1}{p^2}\right)$$ gives one solution $\Psi_0 (x)$ as above, but this solution is not necessarily unique.
\end{cor}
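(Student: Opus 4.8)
The plan is to read the statement as an \emph{asymptotic solvability condition} for $E$ and to solve it in powers of $1/p$. Matching the $|x|_p^0$ coefficient on $\mathbb{Z}_p$ and the $|x|_p^{-3}$ coefficient off $\mathbb{Z}_p$ gives, in terms of the quantities $A,C,D,F$ introduced above, the homogeneous linear system $c_0A=k_5F$ and $c_0C=k_5D$ in the unknowns $c_0,k_5$. A nontrivial eigenfunction exists exactly when the coefficient determinant vanishes, i.e.\ when $AD=FC$. Since $A,C,D,F$ depend on $E$ only through the sequences $\tau_{2n}$ and $s_{2n+1}$ (and, in $C$ and $F$, through one explicit $E$- or $\Gamma_p(-2)$-term), I would treat $AD=FC$ as a single scalar equation in the unknown $E$ for each large prime $p$ and extract $E$ asymptotically as $p\to\infty$.

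First I would substitute the approximations of Lemma 2 into the four series and use $\Gamma_p(-2)=\dfrac{1-p^{-3}}{1-p^{2}}=-p^{-2}\bigl(1+O(p^{-2})\bigr)$. Because the coefficients $\tau_{2n}$ decay and $s_{2n+1}$ grow at controlled geometric rates while each denominator is dominated by its largest power of $p$, every one of $A,C,D,F$ is governed by its first one or two terms, with a geometric tail that I would bound by the Comparison and Ratio tests exactly as in the preceding convergence theorem. Concretely I expect $A$ to be controlled by the $n=0$ term $\tfrac{\tau_0}{p-1}$, the series inside $C$ by its $n=0$ and $n=2$ terms together with the explicit term $-\tfrac{E\,\Gamma_p(-2)}{p-1}$, the series $F$ by the $\Gamma_p(-2)$ term, and $D$ by its $n=2$ term. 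The routine labor is to expand each of these to the precision in $1/p$ needed so that, after forming the products $AD$ and $FC$, the balance determines $E$ up to $O(1/p^2)$.

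With those expansions in hand, $AD=FC$ collapses to one algebraic equation in $E$ whose leading root is $E=2$; carrying the expansion one further order produces the correction, which the statement records as $-2/p$, so that $E=2-\tfrac{2}{p}+O(1/p^2)$. I would then address the non-uniqueness clause: the determinant condition is only a solvability criterion, fixing the ratio $c_0:k_5$ rather than the pair itself and possibly admitting more than one admissible root for $E$; hence the resulting $\Psi_0$ is pinned down only up to an overall scale and the choice of root, which is exactly why the statement asserts the solution ``is not necessarily unique.''

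The main obstacle I anticipate is the bookkeeping of orders rather than any conceptual step. The two sides $AD$ and $FC$ live at genuinely different scales in $p$, and the equality can hold only because $E$ is forced close to $2$; to capture the first correction honestly one must retain enough subleading terms in every series and rigorously control the geometric tails so that the $O(1/p^2)$ remainder is real and not an artifact of truncation. A secondary subtlety is the degenerate term in $C$ at $n=1$, where the denominator $p-p^{-2n+3}$ vanishes: since $\tau_2=0$ (the mode $f_2$ having been discarded because $\Gamma_p(1)=0$) this term is read as $0$, and I would check explicitly that the $f_2$ mode contributes nothing hidden to the balance.
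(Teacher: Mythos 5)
Your proposal follows essentially the same route as the paper: the determinant condition $AD=FC$ (which the paper derives as equation (45) just before the corollary) is expanded asymptotically in $1/p$ using the Lemma 2 approximations for $\tau_{2n}$ and $s_{2n+1}$, the leading balance forces $E=2$, and the next order gives the $-2/p$ correction, with the non-uniqueness traced to the fact that the vanishing determinant fixes only the ratio $c_0:k_5$. Your added remarks on the mismatched scales of $AD$ versus $FC$ and on the vanishing $\tau_2$ term in $C$ are sensible refinements of the same argument, not a different method.
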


\begin{proof}
Assume $E=C+a(\frac{1}{p})+O \left(\frac{1}{p^2}\right)$ for some constant $C, a$. Here $O$ represents the size of the error term.

By Equation (37) and (38), 
\begin{equation}
\begin{split}
&\sum_{n=0}^\infty \frac{\tau_{2n}}{p-p^{-2n}}=\frac{\tau_0}{p-p^0}+\frac{\tau_{2}}{p-p^{-2}}+\frac{\tau_4}{p-p^{-4}}+\frac{\tau_6}{p-p^{-6}}+...
\\
&=\frac{1}{p-1}+0+\frac{(-1)p^2}{p-p^{-4}}+\frac{(-1)E/p^4}{p-p^{-6}}+...\approx\frac{1}{p}-\frac{1}{p^3}+O \left(\frac{1}{p^5}\right).
\end{split}
\end{equation}
By Equation (39) and (40),
\begin{equation}
\begin{split}
&\sum_{n=2}^\infty\frac{s_{2n+1}}{p-p^{2n+4}}=\frac{s_5}{p-p^8}+\frac{s_7}{p-p^{10}}+\frac{s_9}{p-p^{12}}+...
\\&\quad
\approx\frac{1}{p-p^8}+\frac{E}{p-p^{10}}+\frac{(-1)p^2}{p-p^{12}}+...
\approx\frac{-1}{p^8}+\frac{-E+1}{p^{10}}+O\left(\frac{1}{p^{12}}\right).
\end{split}
\end{equation}
Replacing (46), (47) into the left hand side of Equation (45),
\begin{equation}
\begin{split}
&LHS=\left(\frac{1}{p}-\frac{1}{p^3}+O\left(\frac{1}{p^5}\right)\right)\left(\frac{-1}{p^8}+\frac{-E+1}{p^{10}}+O\left(\frac{1}{p^{12}}\right)\right)
\\
&\approx \frac{-1}{p^9}+\frac{-E+2}{p^{11}}+O\left(\frac{1}{p^{13}}\right).
\end{split}
\end{equation} 
We have
\begin{equation}
\begin{split}
&\sum_{n=2}^\infty \frac{s_{2n+1}}{p-p^{2n+1}}-\frac{\Gamma_p(-2)}{p-1}=\frac{s_5}{p-p^5}+\frac{s_7}{p-p^7}+\frac{s_9}{p-p^9}-\frac{1-p^{-3}}{(1-p^2)(p-1)}
\\&
\approx \frac{1}{p-p^5}+\frac{E}{p-p^7}+\frac{(-1)p^2}{p-p^9}+\frac{1}{p^3}+\frac{1}{p^4}+\frac{2}{p^5}+...
\\&
\approx\frac{1}{p^3}+\frac{1}{p^4}+\frac{1}{p^5}+O\left(\frac{1}{p^6}\right).
\end{split}
\end{equation}
Similarily,
\begin{equation}
\begin{split}
&\sum_{n=0}^\infty \frac{\tau_{2n}}{p-p^{-2n+3}}-\frac{E\Gamma_p(-2)}{p-1}
\\&\quad
=\frac{1}{p-p^3}+\frac{\frac{-1}{p^2}}{p-p^{-1}}+\frac{-E/p^4}{p-p^{-3}}+\frac{E}{p^3}+\frac{E}{p^4}+...
\\&\quad
\approx \frac{-2+E}{p^3}+\frac{E}{p^4}+O\left(\frac{1}{p^5}\right).
\end{split}
\end{equation}

Replacing (49) and (50) into the right hand side of Equation (45),
\begin{equation}
\begin{split}
&RHS=(\frac{1}{p^3}+\frac{1}{p^4}+\frac{1}{p^5}+O(\frac{1}{p^6}))(\frac{-2+E}{p^3}+\frac{E}{p^4}+O(\frac{1}{p^5}))
\\
&\approx \frac{E-2}{p^6}+\frac{2E-2}{p^7}+\frac{3E-2}{p^8}+\frac{4E-2}{p^9}...
\end{split}
\end{equation}

Since $LHS=RHS$,we replace $E=c+a(\frac{1}{p})+O(\frac{1}{p^2})$ into the equation and have
\begin{align*}
&\frac{-1}{p^9}+O(\frac{1}{p^{10}})=\frac{E-2}{p^6}+\frac{2E-2}{p^7}+\frac{3E-2}{p^8}+\frac{4E-2}{p^9}+...
\\&\quad
=\frac{c-2}{p^6}+\frac{a+2c-2}{p^7}+...
\end{align*}

Since $c-2=0$, $a+2c-2=0$, we have $c=2$, $a=-2$.
Thus, $$E=2-\frac{2}{p}+O(\frac{1}{p^2}).$$
\end{proof}
\end{ex}

\begin{ex}
Inspired by Example 4, we can find another easy solution to 
\begin{equation*}
D^2\Psi(x)+B\left| x\right|_p^2\Psi(x)=E\Psi(x)
\end{equation*}
 with $B=-1$, \begin{align*}
\psi_0(x) &= \sum_{n=0}^{\infty} c_n f_n (x) + \sum_{n=1}^{\infty} k_n g_{-n}(x) \\
&= \begin{cases} \sum_{n=0}^{\infty} c_n |x|_p^n & \text{ if } x \in \mathbb{Z}_p \\ \sum_{n=1}^{\infty} k_n |x|_p^{-n} & \text{ otherwise}. \end{cases}
\end{align*}.

\begin{lem}
Given $lim_{n\shortrightarrow\infty}\frac{\Gamma_p(2n+3)}{\Gamma_p(2n+5)}=\frac{1}{p^2}$, the equation as well as $\psi_0$ in Example 5, assuming $p$ is large enough, $B=-1$, $\tau_n,s_n$ be the same as Definition 20, then \begin{equation}
\tau_{4n}=\frac{1}{p^{2n}}+O \left( \frac{1}{p^{2n+2}} \right) 
\end{equation}
for $n\geq1$, 
\begin{equation}
\tau_{4n+2}=\frac{nE}{p^{2n+2}}+O \left(\frac{1}{p^{2n+4}}\right)
\end{equation}
for $n\geq2$,
\begin{equation}
s_{4n+1}=p^{2n-2}+O(p^{2n-4})
\end{equation}
\begin{equation}
s_{4n+3}=nEp^{2n-2}+O(p^{2n-4})
\end{equation}
 for $n\geq2$, where O are error terms.
\end{lem}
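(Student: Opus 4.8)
The plan is to follow the proof of the preceding lemma (the $B=1$ case) almost verbatim, tracking only the effect of replacing $B=1$ by $B=-1$. First I would note that the two Gamma-ratio estimates used there, namely $\frac{\Gamma_p(2n+3)}{\Gamma_p(2n+5)}\approx\frac{1}{p^2}$ and $\frac{\Gamma_p(-2n)}{\Gamma_p(-2n-2)}\approx p^2$ for large $p$, do not involve $B$ and so carry over unchanged; they again justify replacing the exact Gamma coefficients in the recursions (31) and (34) by these leading large-$p$ values.

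Substituting $B=-1$, $c_{2n}=c_0\tau_{2n}$, and $k_{2n+1}=k_5 s_{2n+1}$ into (31) and (34) and applying these estimates, the recursions become
$$\tau_{2n+4}\,p^2-\tau_{2n}=E\tau_{2n+2},\qquad s_{2n+1}\,p^2-s_{2n+5}=Es_{2n+3},$$
which are exactly the simplified recursions (41) and (42) with the sign of the $B$-term reversed. Solving the first for its highest-index term gives $\tau_{2n+4}=(E\tau_{2n+2}+\tau_{2n})/p^2$, in contrast to $(E\tau_{2n+2}-\tau_{2n})/p^2$ in the $B=1$ case. I would then compute the base cases $\tau_4,\tau_6,\tau_8,\dots$ (and correspondingly $s_5,s_7,\dots$) directly from these recursions to read off the pattern, and close the argument by induction on $n$: assuming the claimed forms for $\tau_{4n}$ and $\tau_{4n+2}$, I substitute into the recursion, retain the dominant power of $p$, and absorb everything smaller into the $O$-term, just as in the earlier lemma. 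As there, the induction for $s_{4n+1}$ and $s_{4n+3}$ is formally identical and may be omitted.

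The one genuinely new point — and the step where a sign slip is easiest to make — is the bookkeeping that explains why the factors $(-1)^n$ and $(-1)^{n+1}$ appearing in the $B=1$ formulas are absent here. In the inductive step the leading contribution to $\tau_{4n+4}$ now comes from the term $+\tau_{4n}/p^2$ rather than $-\tau_{4n}/p^2$, because $\tau_{4n}=O(p^{-2n})$ dominates $E\tau_{4n+2}=O(p^{-2n-2})$; since this dominant term no longer carries a minus sign, each step preserves the sign instead of flipping it, and the alternation disappears. The corresponding check for $s$ uses that, in the recursion $s_{2n+5}=s_{2n+1}p^2-Es_{2n+3}$, the term $s_{2n+1}p^2$ dominates $Es_{2n+3}$, so the leading sign is again $+1$ at every stage. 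Once this sign-and-order bookkeeping is verified, the desired $(-1)$-free expansions follow immediately.
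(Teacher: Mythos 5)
Your proposal is correct and takes essentially the same route as the paper, which simply defers to the inductive argument of the $B=1$ lemma; you in fact supply the one detail the paper omits, namely that with $B=-1$ the dominant term $+\tau_{4n}/p^2$ (resp.\ $+s_{4n+1}p^2$) no longer carries a minus sign, so the $(-1)^n$ alternation disappears. Nothing further is needed.
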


\begin{proof}
The proof method is exactly the same as Lemma 2, using the inductive method on the recursive relationship, i.e. Equation (30),(31),(33),(34) of $c_n$ and $k_n$. It is left to readers to find out Lemma 3 is true.
\end{proof}

\begin{thm}
\begin{align*}
\psi_0(x) &= \sum_{n=0}^{\infty} c_n f_n (x) + \sum_{n=1}^{\infty} k_n g_{-n}(x) \\
&= \begin{cases} \sum_{n=0}^{\infty} c_n |x|_p^n & \text{ if } x \in \mathbb{Z}_p \\ \sum_{n=1}^{\infty} k_n |x|_p^{-n} & \text{otherwise}. \end{cases}
\end{align*}
is a solution to the function $D^2\Psi(x)+B\left| x\right|_p^2\Psi(x)=E\Psi(x)$, where $B=-1$, $E$ is some constant determined by $B$.
\end{thm}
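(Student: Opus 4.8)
The plan is to reproduce the proof of Theorem 18 (the $B=1$ case) almost verbatim, since Lemma 3 differs from the Lemma 2 used there only by the disappearance of the alternating factors $(-1)^n$ and $(-1)^{n+1}$. The magnitudes $|\tau_n|$ and $|s_n|$ are identical in the two cases, and the convergence of $\psi_0$ was established in Theorem 18 through absolute estimates (the Comparison Test) that depend only on these magnitudes and not on the signs. Thus the entire argument carries over once the case split dictated by the piecewise definition of $\psi_0$ is repeated.

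First I would fix $x\in\mathbb{Z}_p$, so that $|x|_p\leq 1$, and substitute the Lemma 3 expressions for $\tau_{4n}$ and $\tau_{4n+2}$ into $\sum_{n=0}^\infty c_n|x|_p^n$, recalling from Definition 20 that $c_{2n}=c_0\tau_{2n}$ and from (30) that $c_{2n+1}=0$. Because each error term $O$ is smaller in absolute value than its leading term, the general term of the $\tau_{4n}$-series is bounded by $\frac{2}{p^{2n}}|x|_p^{4n}$ and that of the $\tau_{4n+2}$-series by $\frac{2nE}{p^{2n+2}}|x|_p^{4n+2}$. Both dominating series converge, the first geometrically and the second by the Ratio Test with limiting ratio $p^{-2}<1$, so the Comparison Test yields absolute convergence of $\sum_{n=0}^\infty c_n|x|_p^n$ on $\mathbb{Z}_p$.

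Next I would fix $x\in\mathbb{Q}_p\setminus\mathbb{Z}_p$, so that $|x|_p\geq p$, and substitute the Lemma 3 expressions for $s_{4n+1}$ and $s_{4n+3}$ into $\sum_{n=1}^\infty k_n|x|_p^{-n}$, recalling that $k_{2n}=0$ and $k_{2n+1}=k_5 s_{2n+1}$. Here the growing factors in the numerators are overwhelmed by the negative powers of $|x|_p\geq p$: the $s_{4n+1}$-term is bounded in absolute value by $2p^{-2n+1}$ and the $s_{4n+3}$-term by $2nE\,p^{-2n-1}$, both summable (geometric and Ratio Test respectively). The Comparison Test then gives convergence of $\sum_{n=1}^\infty k_n|x|_p^{-n}$ off $\mathbb{Z}_p$. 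Combined with the previous paragraph, this shows $\psi_0$ converges for every $x\in\mathbb{Q}_p$; and since its coefficients were fixed in (30), (31), (33), (34) precisely to satisfy the eigenvalue relations term by term, the convergent $\psi_0$ is a genuine solution of $D^2\Psi+B|x|_p^2\Psi=E\Psi$ with $B=-1$.

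The only point demanding genuine care is confirming that the sign flip from $B=1$ to $B=-1$ truly leaves the estimates intact, i.e.\ that every inequality invoked in Theorem 18 bounded $|\tau_n|$ and $|s_n|$ rather than the signed quantities. Since all four bounds above are written in absolute value and the Comparison Test is insensitive to sign, this verification is routine; the substantive content lies entirely in Lemma 3, whose inductive proof (left to the reader there) is the exact analogue of Lemma 2 with the signs suppressed.
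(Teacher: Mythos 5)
Your proposal matches the paper's own proof of this theorem, which likewise just transplants the Theorem 18 argument using the Lemma 3 asymptotics, bounds $|\tau_{4n}|$ by $\frac{2}{p^{2n}}$ and $|\tau_{4n+2}|$ by $\frac{2nE}{p^{2n+2}}$, and invokes the Geometric Series, Ratio, and Comparison Tests (the paper omits the $k_n$ case as ``similar,'' which you spell out exactly as in Theorem 18). Your closing observation --- that every estimate in the $B=1$ proof already bounded absolute values, so dropping the $(-1)^n$ factors changes nothing --- is precisely the implicit justification the paper relies on.
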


\begin{proof}
Again, the proof method is exactly the same as Theorem 18, using the Comparison Test, the Geometric Series Test and the Ratio Test. 
For each $n\in\mathbb{N}$, \begin{equation*}
\tau_{4n}=\frac{1}{p^{2n}}+O \left( \frac{1}{p^{2n+2}} \right)<\frac{2}{p^{2n}}.
\end{equation*}
\begin{equation*}
\tau_{4n+2}=\frac{nE}{p^{2n+2}}+O(\frac{1}{p^{2n+4}})<\frac{2nE}{p^{2n+2}}.
\end{equation*}
Since $\sum\frac{2}{p^{2n}}$ converges by the Geometric Series Test, $\sum c_{4n}|x|_p^{4n}$ converges.
Since $\sum \frac{2nE}{p^{2n+2}}$ converges by the Ratio Test, $\sum c_{4n+2}|x|_p^{4n+2}$ converges. Thus, $\sum c_n|x|_p^n$ converges. Similar methods can prove $k_n|x|_p^n$ converges and the proof is thus omitted.
\end{proof}

Like Example 4, let us find a constant E that satisfies the $p$-adic Schr\"{o}dinger equation under the condition $B=-1$.
\begin{cor}
Given the condition in Theorem 19, $E=-\frac{2}{3}\frac{1}{p^2}+\frac{7}{3}\frac{1}{p^3}+O(\frac{1}{p^4})$ is one solution to $B=-1$, but not necessarily unique.
\end{cor}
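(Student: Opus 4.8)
The plan is to re-run the argument of the preceding corollary (the case $B=1$) with $B$ replaced by $-1$. The engine of that argument is the solvability condition Equation (45), $AD=FC$, where $A,C,D,F$ are the four sums introduced just before (45); it is exactly the condition that the homogeneous system $c_0A-k_5F=0$, $c_0C-k_5D=0$ have a nonzero solution $(c_0,k_5)$, and it is insensitive to $B$ except through the four sums themselves. So the first step is to substitute $B=-1$ into the definitions of $A,C,D,F$ --- note in particular that the term $-B\,\Gamma_p(-2)/(p-1)$ appearing in $F$ now carries a plus sign --- and to insert the $B=-1$ asymptotics of Lemma 3 for $\tau_{2n}$ and $s_{2n+1}$, supplemented by the small-index coefficients $\tau_0,\tau_2,\tau_4,\tau_6$ and $s_5,s_7,s_9$, several of which are not covered by Lemma 3 and must be read off directly from the recursions (30), (31), (33), (34) specialized to $B=-1$.

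I would then expand each of $A,C,D,F$ as a series in $1/p$, just as in Equations (46)--(50) but with the new signs. The decisive difference from the $B=1$ case is that the alternating signs $(-1)^n$ are now absent, so in the sum defining $C$ the $-1/p^3$ produced by the $\tau_0$ term and the $+1/p^3$ produced by the $\tau_4$ term cancel, whereas for $B=1$ they reinforced to give the leading $(E-2)/p^3$. Because this leading balance collapses, the first genuine constraint on $E$ is deferred to a higher power of $1/p$, and this is exactly why the eigenvalue is of size $O(1/p^2)$ here rather than $O(1)$. I therefore take the ansatz $E=\dfrac{c}{p^2}+\dfrac{a}{p^3}+O\!\left(\dfrac{1}{p^4}\right)$, anticipating that the constant and $1/p$ coefficients drop out.

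With the ansatz in place I would form $AD$ and $FC$, expand both in $1/p$, and match coefficients order by order. As in the $B=1$ corollary, $AD$ begins only at a high power of $1/p$, so each low-order coefficient of $FC$ is forced to vanish; substituting the ansatz into those coefficients turns the first two such conditions into a pair of linear equations for $c$ and $a$, whose solution gives the stated values $c=-\tfrac{2}{3}$, $a=\tfrac{7}{3}$, that is $E=-\tfrac{2}{3}p^{-2}+\tfrac{7}{3}p^{-3}+O(p^{-4})$. The convergence of the resulting $\psi_0$ is already guaranteed by Theorem 19, so this completes the construction. That the solution need not be unique reflects that $AD=FC$ is a single transcendental equation in the one unknown $E$, which may have several roots; the asymptotic matching only locates the one near $-\tfrac{2}{3}p^{-2}$ and pins $(c_0,k_5)$ down only up to an overall scale.

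The main obstacle is the asymptotic bookkeeping. One must carry each of the four sums to high enough order in $1/p$ that the $c$- and $a$-terms of $E$ actually enter the determining equations, while simultaneously computing the off-range coefficients $\tau_6,s_5,s_7$ by hand from the $B=-1$ recursions, where a single sign slip propagates straight into the final constants. The difficulty is compounded by the governing balance sitting at a high power of $1/p$, well beyond the $1/p^3$ scale at which the individual sums start, and by the matching being only heuristic --- consistent at the first orders but over-determined past them --- which is precisely why the result is phrased as an asymptotic approximation that need not be unique.
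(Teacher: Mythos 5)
Your proposal follows essentially the same route as the paper: substitute the $B=-1$ asymptotics of Lemma 3 (plus the hand-computed low-index coefficients) into the four sums $A,C,D,F$, expand each in $1/p$, impose the determinant condition $AD=FC$ of Equation (45), and match coefficients to extract $E=-\tfrac{2}{3}p^{-2}+\tfrac{7}{3}p^{-3}+O(p^{-4})$. The only cosmetic difference is that you build the vanishing of the constant and $1/p$ terms of $E$ into the ansatz from the start (correctly justified by the $\tau_0$--$\tau_4$ cancellation), whereas the paper starts from $E=C+a/p+b/p^2+d/p^3+O(1/p^4)$ and derives $C=a=0$ from the matching.
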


\begin{proof}
We omit some procedures of calculation since they are very similar to those under Corollary 2. Assume $E=C+\frac{a}{p}+b(\frac{1}{p^2})+d(\frac{1}{p^3})+O(\frac{1}{p^4})$ for some constant $C, a, b, d$.
\begin{equation}
\sum_{n=0}^\infty \frac{\tau_{2n}}{p-p^{-2n}}\approx \frac{1}{p}+\frac{1}{p^3}+O\left(\frac{1}{p^5}\right)
\end{equation}

\begin{equation}
\sum_{n=2}^\infty \frac{s_{2n+1}}{p-p^{2n+4}}\approx \frac{-1}{p^8}+\frac{-E-1}{p^{10}}+O\left(\frac{1}{p^{12}}\right)
\end{equation}

Replacing (56) and (57) into the left hand side of (45), \begin{equation}
LHS\approx \frac{-1}{p^9}+\frac{-E-2}{p^{11}}+O\left(\frac{1}{p^{13}}\right)
\end{equation}

\begin{equation}
\sum_{n=2}^\infty \frac{s_{2n+1}}{p-p^{2n+1}}+\frac{\Gamma_p(-2)}{p-1}\approx \frac{-1}{p^3}+\frac{-1}{p^4}+\frac{-3}{p^5}+O\left(\frac{1}{p^6}\right)
\end{equation}

\begin{equation}
\sum_{n=0}^\infty \frac{\tau_{2n}}{p-p^{-2n+3}}-\frac{E\Gamma_p(-2)}{p-1}\approx \frac{E}{p^3}+\frac{E}{p^4}+\frac{3E}{p^5}+O\left(\frac{1}{p^6}\right).
\end{equation}

Replacing (59) and (60) into the right hand side of (45), \begin{equation}
RHS\approx\frac{-E}{p^6}+\frac{-2E}{p^7}+\frac{-7E}{p^8}+O\left(\frac{1}{p^9}\right).
\end{equation}

Since $LHS=RHS$, we replace $E=C+\frac{a}{p}+b(\frac{1}{p^2})+d(\frac{1}{p^3})+O(\frac{1}{p^4})$ into the equation, and get $$E=-\frac{2}{3}\frac{1}{p^2}+\frac{7}{3}\frac{1}{p^3}+O \left(\frac{1}{p^4}\right)$$ 
after some calculation.
\end{proof}

\end{ex}

\begin{remark}
After exploring the above two examples assuming $\Psi_0$ is a convergent power series, it is intuitive to think if constant multiples of $\Psi_0$ are solutions to the $p$-adic Schrodinger Equation. Thus, we have the following proposition.
\end{remark}

\begin{prop}
If \begin{align}
\psi_0(x) &= \sum_{n=0}^{\infty} c_n f_n (x) + \sum_{n=1}^{\infty} k_n g_{-n}(x) \\
&= \begin{cases} \sum_{n=0}^{\infty} c_n |x|_p^n & \text{ if } x \in \mathbb{Z}_p \\ \sum_{n=1}^{\infty} k_n |x|_p^{-n} & \text{ otherwise}. \end{cases}
\end{align} is a solution to 
\begin{equation*}
D^2\Psi(x)+B\left| x\right|_p^2\Psi(x)=E\Psi(x),
\end{equation*} with $B,E$ fixed, then $\Psi_c(x)=c\Psi_0(x)$ is also a solution to the equation for any $c\in\mathbb{Q}_p$.
\end{prop}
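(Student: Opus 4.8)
The plan is to reduce the statement to the linearity of the three operations appearing in the equation. The operator $D^2$ is linear because it is defined by an integral against the Haar measure, multiplication by $B\left| x\right|_p^2$ is pointwise multiplication by a fixed function (hence linear in $\Psi$), and scaling by $E$ is linear. Consequently, substituting $\Psi_c = c\Psi_0$ and pulling the constant $c$ through each operation should reduce the claim directly to the hypothesis that $\Psi_0$ solves the equation. No estimate or recursion needs to be redone.

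First I would establish that $D^2$ is linear, i.e.\ that $D^2(c\Psi_0) = c\,D^2\Psi_0$. Using the integral definition of $D^\alpha$ from Section 4,
\begin{equation*}
D^2(c\Psi_0)(x) = \frac{1}{\Gamma_p(-2)}\int_{\mathbb{Q}_p}\frac{c\Psi_0(y)-c\Psi_0(x)}{\left| x-y\right|_p^{3}}\,dy.
\end{equation*}
Since $c$ is a fixed scalar that does not depend on the integration variable $y$, we have $c\Psi_0(y)-c\Psi_0(x) = c\bigl(\Psi_0(y)-\Psi_0(x)\bigr)$, so $c$ factors out of the integral by linearity of the $p$-adic integral. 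This yields $D^2(c\Psi_0)=c\,D^2\Psi_0$.

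Then I would assemble the computation
\begin{equation*}
D^2(c\Psi_0)+B\left| x\right|_p^2(c\Psi_0) = c\,D^2\Psi_0 + cB\left| x\right|_p^2\Psi_0 = c\bigl(D^2\Psi_0+B\left| x\right|_p^2\Psi_0\bigr) = cE\Psi_0 = E(c\Psi_0),
\end{equation*}
where the third equality invokes the hypothesis that $\Psi_0$ solves $D^2\Psi+B\left| x\right|_p^2\Psi=E\Psi$. This exhibits $\Psi_c=c\Psi_0$ as a solution of the same equation, with the \emph{same} $B$ and $E$.

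The only point requiring care, and it is not a genuine obstacle, is well-definedness: one must note that $\Psi_c$ is still given by a convergent series, so that the integral defining $D^2\Psi_c$ makes sense. But since $c$ is a fixed scalar, multiplying the convergent series $\sum_{n}c_n\left| x\right|_p^n$ (on $\mathbb{Z}_p$) and $\sum_{n}k_n\left| x\right|_p^{-n}$ (off $\mathbb{Z}_p$) by $c$ produces $\sum_n cc_n\left| x\right|_p^n$ and $\sum_n ck_n\left| x\right|_p^{-n}$, which converge by the same comparison, geometric, and ratio tests already used to establish convergence of $\Psi_0$. Thus the proposition is essentially a restatement of the linearity of the operator $D^2 + B\left| x\right|_p^2$ together with scalar multiplication, and no nontrivial analysis beyond factoring out $c$ is needed.
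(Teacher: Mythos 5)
Your proof is correct, and it reaches the same conclusion as the paper by the same underlying principle (linearity of the equation), but the route is cleaner and slightly different in where the linearity is verified. The paper substitutes the series expansion of $\Psi_0$ into the equation to obtain a term-by-term identity, multiplies that entire identity by $c$, and then asserts that ``it is easy to check'' that the scaled left-hand side equals $D^2\Psi_c + B|x|_p^2\Psi_c$ and the scaled right-hand side equals $E\Psi_c$ --- that is, it verifies linearity at the level of the expanded series, separately for $x\in\mathbb{Z}_p$ and $x\in\mathbb{Q}_p\setminus\mathbb{Z}_p$, and leaves the key identification implicit. You instead prove $D^2(c\Psi_0)=c\,D^2\Psi_0$ directly from the integral definition of $D^\alpha$, which is precisely the step the paper glosses over, and you never need the series representation at all: your argument shows that \emph{any} solution, whatever its form, can be scaled by a constant. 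You also note that the scaled series still converges, a point the paper does not address. The only small caveat is that $c$ ranges over $\mathbb{Q}_p$ while $\Psi_0$ is complex-valued, so ``$c$ factors out of the integral'' should be read with whatever interpretation of the scalar multiplication the paper intends; this ambiguity is present in the paper's own statement and is not a defect introduced by your argument.
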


\begin{proof}
Assume $x\in\mathbb{Z}_p$, then we replace $\Psi_0(x)=\sum_{n=0}^\infty c_n|x|_p^n$ into the equation, and get
\begin{equation}
\begin{split}
&\sum_{n=0}^{\infty}c_n(\frac{\Gamma_p(n+1)}{\Gamma_p(n-1)}|x|_p^{n-2}+\frac{1}{\Gamma_p(-2)}\frac{p-1}{p-p^{-n+3}})+\sum_{n=1}^{\infty}k_n\frac{1}{\Gamma_p(-2)}\frac{1-p}{p-p^{n+3}}
\\
&+B|x|_p^2\sum_{n=0}^\infty c_n|x|_p^n=E\sum_{n=0}^\infty c_n|x|_p^n.
\end{split}
\end{equation}

We multiply both sides of Equation (64) above by a constant $c\in\mathbb{Q}_p$, and have
\begin{equation}
\begin{split}
&c\sum_{n=0}^{\infty}c_n(\frac{\Gamma_p(n+1)}{\Gamma_p(n-1)}|x|_p^{n-2}+\frac{1}{\Gamma_p(-2)}\frac{p-1}{p-p^{-n+3}})+c\sum_{n=1}^{\infty}k_n\frac{1}{\Gamma_p(-2)}\frac{1-p}{p-p^{n+3}}
\\
&+cB|x|_p^2\sum_{n=0}^\infty c_n|x|_p^n=cE\sum_{n=0}^\infty c_n|x|_p^n.
\end{split}
\end{equation}

It is easy to check that the left hand side of $Equation(65)=D^2(\Psi_c(x))+B|x|_p^2\Psi_c(x)$, and the right hand side of $ Equation(65)=E\Psi_c(x)$. 

Thus, 
\begin{equation}
D^2(\Psi_c(x))+B|x|_p^2\Psi_c(x)=E\Psi_c(x)
\end{equation}
The case when $x\in\mathbb{Q}_p\setminus\mathbb{Z}_p$ is similar. We can also get Equation (66) in this case. Thus, $\Psi_c$ is indeed a solution to the equation.
\end{proof}

\pagebreak

\section*{Acknowledgments}

This undergraduate thesis was finished by the author under the supervision of Dr. Maria Nastasescu at Northwestern University Math Department during the Summer of 2022. The author appreciates her help and support.

\vspace{1cm}

\end{document}